\newtheorem{theorem}{Theorem}[section]
\newtheorem{definition}[theorem]{Definition}
\newtheorem{lemma}[theorem]{Lemma}
\newtheorem{proposition}[theorem]{Proposition}
\newtheorem{remark}[theorem]{Remark}
\def\nc{\newcommand}
\def\be{\beta}
\nc\pa{\partial}
\nc\CC{\mathbb{C}}
\nc\RR{\mathbb{R}}
\nc\QQ{\mathbb{Q}}
\nc\ZZ{\mathbb{Z}}
\nc\NN{\mathbb{N}}
\def\ba{\begin{align}}
\def\bad{\begin{aligned}}
\def\be{\begin{equation}}
\def\ea{\end{align}}
\def\ead{\end{aligned}}
\def\ee{\end{equation}}
\begin{document}
\title{Global existence and blow-up of solutions for a system of fractional
wave equations}
\author{Ahmad Bashir}
\thanks{E-mail address: bashirahmad\_qau@yahoo.com}
\author{Mohamed Berbiche}
\thanks{E-mail address: mohamed.berbiche@univ-biskra.dz, berbichemed@yahoo.fr%
}
\author{Ahmed Alsaedi}
\thanks{E-mail address: aalsaedi@hotmail.com }
\author{Mokhtar Kirane}
\thanks{E-mail address: mokhtar.kirane@univ-lr.fr, }
\date{}

\begin{abstract}
We investigate the Cauchy problem for a $2\times 2$-system of weakly coupled
semi-linear fractional wave equations with polynomial nonlinearities posed
in $\mathbb{R}^{+}\times \mathbb{R}^{N}$. Under appropriate conditions on
the exponents and the fractional orders of the time derivatives, it is shown
that there exists a threshold value of the dimension $N$, for which, small
data-global solutions as well as finite time blowing-up solutions exist.
Furthermore, we investigate the $L^{\infty }$-decay estimates of global
solutions.
\end{abstract}

\maketitle

\section{Introduction}

We consider the following Cauchy problem 
\begin{equation}
\left\{ 
\begin{array}{c}
^{C}D_{0|t}^{\gamma _{1}}u-\Delta u=f(v(t,.)),\;\;t>0,x\in \mathbb{R}^{n}, \\%
[5pt] 
^{C}D_{0|t}^{\gamma _{2}}v-\Delta v=g(u(t,.)),\;\;t>0,x\in \mathbb{R}^{n},%
\end{array}%
\right.  \label{sys1}
\end{equation}%
subject to the initial conditions 
\begin{equation}
\left\{ 
\begin{array}{l}
u(0,x)=u_{0}(x),u_{t}(0,x)=u_{1}\left( x\right) ,\;\;x\in \mathbb{R}^{n}, \\%
[5pt] 
v(0,x)=v_{0}(x),\;v_{t}(0,x)=v_{1}(x),\;\;\;x\in \mathbb{R}^{n},%
\end{array}%
\right.  \label{initdat}
\end{equation}%
where $^{C}D_{0|t}^{\alpha }u$ denotes the Caputo derivative, defined for a
function $u$ of class $C^{2}$, as 
\begin{equation*}
\left( ^{C}D_{0|t}^{\alpha }u\right) \left( t\right) :=\frac{1}{\Gamma
\left( 2-\alpha \right) }\displaystyle\int_{0}^{t}\frac{u_{tt}\left(
s,.\right) }{\left( t-s\right) ^{\alpha -1}}\,ds,\text{ }1<\alpha <2\;\text{%
(see, e.g. \cite{SKM}),}
\end{equation*}%
$\Delta $ is the Laplacian, $f(v)=\pm \left\vert v\right\vert ^{p-1}v$ or $%
\pm \left\vert v\right\vert ^{p},$ $g(u)=\pm \left\vert u\right\vert ^{q-1}u$
or $\pm \left\vert u\right\vert ^{q}$, $p,q\geq 1$, and $u_{0}$, $v_{0}$, $%
u_{1}$, $v_{1}$ are given initial data.\newline

Observe that system (\ref{sys1}) interpolates reaction-diffusion system ($%
\gamma _{1}=\gamma _{2}=1$) and hyperbolic system ($\gamma _{1}=\gamma
_{2}=2 $).\newline

Before we present our results and comment on them, let us dwell on some
related existing results.\newline

Escobedo and Herrero \cite{Escobedo} studied the global existence and
blowing-up solutions of the system 
\begin{equation}  \label{EscoHerr}
\left\{ 
\begin{array}{c}
u_{t}-\Delta u=v^{p}, \quad t>0, x\in \mathbb{R}^{N}, \bigskip \\ 
v_{t}-\Delta v=u^{q}, \quad t>0, x\in \mathbb{R}^{N}.%
\end{array}
\right.
\end{equation}
In particular, for 
\begin{equation*}
pq >1, \quad \frac{N}{2} \leq \frac{\max\lbrace p, q \rbrace +1}{pq - 1},
\end{equation*}
they have shown that every nontrivial solution of \eqref{EscoHerr} blows-up
in a finite time $T^{*}=T^{*}(u, v)$, and 
\begin{equation*}
\limsup_{t \rightarrow T^{*}}\Vert u(t)\Vert_{\infty}= \limsup_{t
\rightarrow T^{*}}\Vert v(t)\Vert_{\infty}= + \infty.
\end{equation*}
Some related results concerning global existence or blowing-up solutions can
be found in \cite{FilaUda}, \cite{Samarski}, \cite{PangSunWang}, \cite{Pozio}%
, \cite{Redlinger}, etc. In particular, see the review papers \cite%
{Denglevine}, \cite{Bandle} and the authoritative paper \cite{Pohozaev}.%
\newline

Blowing-up solutions and global solutions for time-fractional differential
systems have been studied, for example, in \cite{KLT}, \cite{ZhangQuan}, 
\cite{HakemBer}, \cite{Zacher}, \cite{EidelKoch}, \cite{AlmeidaEJDE}, \cite%
{AlmeidaJMAA}, \cite{AlmeidaDIE}, \cite{YF1}, \cite{HirataMiao}, \cite%
{ZhangQuan}.\newline

Concerning the system of wave equations 
\begin{equation}
\left\{ 
\begin{array}{c}
u_{tt}-\Delta u=|v|^{p},\quad 0<t<T,x\in \mathbb{R}^{N}, \\ 
v_{tt}-\Delta v=|u|^{q},\quad 0<t<T,x\in \mathbb{R}^{N},%
\end{array}%
\right.  \label{SystWave}
\end{equation}%
subject to initial data 
\begin{equation}
\left\{ 
\begin{array}{c}
u(0,x)=f(x),\quad u_{t}(0,x)=g(x),\;\;x\in \mathbb{R}^{N}, \\ 
v(0,x)=h(x),\quad v_{t}(0,x)=k(x),\;\;x\in \mathbb{R}^{N},%
\end{array}%
\right.  \label{incd}
\end{equation}%
where $f,g,h,k\in C_{0}^{\infty }(\mathbb{R}^{N})$, we may mention the works 
\cite{KDeng}, \cite{Deng} and \cite{DelSantoMitidieri}. For $N=3$ in \cite%
{DelSantoMitidieri}, the following optimal results were obtained:\newline

$\triangleright$ If $p, q>1$ and 
\begin{equation*}
\max \left\lbrace \frac{p+2+q^{-1}}{pq-1}, \frac{q+2+p^{-1}}{pq-1}
\right\rbrace >1,
\end{equation*}
then the classical solution to \eqref{SystWave}-\eqref{incd} blows-up in a
finite time.\newline

$\triangleright$ If $p, q>1$ and 
\begin{equation*}
\max \left\lbrace \frac{p+2+q^{-1}}{pq-1}, \frac{q+2+p^{-1}}{pq-1}
\right\rbrace <1,
\end{equation*}
then there exists a global classical solution to \eqref{SystWave}-%
\eqref{incd} for sufficiently ``small'' initial data.\newline

Our interest in \eqref{sys1} stems from the fact that it interpolates
different situations; for example, reaction-diffusion systems with
fractional derivatives can model chemical reactions taking place in porous
media. In this case, fractional (nonlocal) terms with order in $(0, 1)$
account for the anomalous diffusion \cite{Magin}, \cite{Metzler}.
Experimental results show that several complex systems have a non-local
dynamics. \newline

On the other hand, equations/systems of fractional differential equations
with order in $(1, 2)$ have been studied in \cite{Mainardi}, \cite{Tarasov}, 
\cite{ChenHolm}, etc. Examples include mechanical, acoustical, biological
phenomena, marine sediments, etc. \cite{Straka}, \cite{Kappeler}. \newline

In the present paper, we consider the problem \eqref{sys1}-\eqref{initdat}
and present conditions, relating the space dimension $N$ with the parameters 
$\gamma _{1}$, $\gamma _{2}$, $p$, and $q$, for which the solution of %
\eqref{sys1}-\eqref{initdat} exists globally in time and satisfies $%
L^{\infty }$-decay estimates. We also investigate blowing-up in finite time
solutions with initial data having positive average. Our study of global
existence employs the mild formulation of the solution via Mittag-Leffler's
function, while we use the test function approach due to Mitidieri and
Pohozaev \cite{Pohozaev} for the case of blowing-up solutions. The test
function approach has been used by several authors, for instance, see \cite%
{KLT},\cite{KirFin},\cite{PangSunWang}, \cite{ZhangQuan},\cite{Bermdemp},%
\cite{Bermdemp}). To the best of our knowledge, there do not exist global
existence and large time behavior results for the time-fractional diffusion
system with two different fractional powers. Thus our results are new and
contribute significantly to the existing literature on the topic. \newline

The rest of this paper is organized as follows. In next section, we present
some preliminary Lemmas, basic facts and useful tools such as time
fractional derivative, $L^{p}$-$L^{q}$-estimates of the fundamental solution
of the linear time fractional wave equation. section 3 contains the main
results of the paper. Finally, section 4 and section 5 are devoted to the
proof of small data global existence and blow-up in finite time of the
solution of problem (\ref{sys1})-(\ref{initdat}).\newline

In the sequel, $C$ will be a positive constant which may have different
values from line to line. The space $L^{p}\left( \mathbb{R}^{N}\right) $ $%
\left( 1\leq p<\infty \right) $ will be equipped with the norm: 
\begin{equation*}
\left\Vert u\right\Vert _{L^{p}\left( \mathbb{R}^{N}\right) }^{p}=%
\displaystyle\int_{\mathbb{R}^{N}}\left\vert u\left( t,x\right)
\right\vert^{p}dx.
\end{equation*}

\section{Preliminaries}

The Riemann-Liouville fractional integral of order $0<\alpha <1$ of $f(t)
\in L^{1}\left( 0,T\right)$ is defined as 
\begin{equation*}
\left( J_{0|t}^{\alpha }f\right) (t) =\frac{1}{\Gamma (\alpha ) }
\int_{0}^{t}\left( t-\tau \right) ^{\alpha -1}f( \tau) \, d\tau,
\end{equation*}
where $\Gamma $ stands for the usual Euler gamma function.\newline

The left-sided Riemann-Liouville derivative $D_{0|t}^{\alpha }f$ (see \cite%
{SKM}), for $f\in C^{m-1}(0,T),$ of order $\alpha $ is defined as follows: 
\begin{equation*}
\left( D_{0|t}^{\alpha }f\right) (t)=\frac{d^{m}}{dt^{m}}\left(
J_{0|t}^{m-\alpha }f\right) (t),\text{ }t>0,\text{ }m-1<\alpha <m,\text{ }%
m\in \mathbb{N}.
\end{equation*}%
The Caputo fractional derivative of a function $f\in C^{m}(0,T)$ is defined
as 
\begin{equation*}
\left( ^{C}D_{0|t}^{\alpha }f\right) (t)=J_{0|t}^{m-\alpha }f^{(m)}(t),\text{
}t>0,\text{ }m-1<\alpha <m,\text{ }m\in \mathbb{N}.
\end{equation*}%
For $0<\alpha <1$ and $f$ of class $C^{1}$, we have 
\begin{equation*}
\left( D_{0|t}^{\alpha }f\right) \left( t\right) =\frac{1}{\Gamma \left(
1-\alpha \right) }\left[ \frac{f\left( 0\right) }{t^{\alpha }}+\int_{0}^{t}%
\frac{f^{\prime }\left( \sigma \right) }{\left( t-\sigma \right) ^{\alpha }}%
d\sigma \right] ,
\end{equation*}%
and 
\begin{equation}
\left( D_{t|T}^{\alpha }f\right) \left( t\right) =\frac{1}{\Gamma \left(
1-\alpha \right) }\left[ \frac{f\left( T\right) }{\left( T-t\right) ^{\alpha
}}-\int_{t}^{T}\frac{f^{\prime }\left( \sigma \right) }{\left( \sigma
-t\right) ^{\alpha }}d\sigma \right] .  \label{eq:3}
\end{equation}%
The Caputo derivative is related to the Riemann-Liouville derivative for $%
f\in AC\left[ 0,T\right] $ (the space of absolutely continuous functions
defined on $\left[ 0,T\right] $) by 
\begin{equation*}
\left( ^{C}D_{0|t}^{\alpha }f\right) (t)=D_{0|t}^{\alpha }\left(
f(t)-f(0)\right) .
\end{equation*}%
Assume that $0<\alpha <1,f\in C^{1}([a,b])$ and $g\in C(a,b)$. Then the
formula of integration by parts is 
\begin{equation*}
\int_{a}^{b}f(t)(D_{0|t}^{\alpha
}g)(t)\,dt=\int_{a}^{b}g(t)(^{C}D_{t|T}^{\alpha }f)(t)\,dt+f(a)(I_{a\mid
t}^{1-\alpha }g)(t)\Big\vert_{t=a}^{t=b}.
\end{equation*}%
The Mittag-Leffler function is defined (see \cite{SKM}) by : 
\begin{equation*}
E_{\alpha ,\beta }\left( z\right) =\sum_{k=0}^{\infty }\frac{z^{k}}{\Gamma
\left( \alpha k+\beta \right) }\text{, }\alpha \text{, }\beta \in \mathbb{C}%
\text{, }\Re \left( \alpha \right) >0\text{, }z\in \mathbb{C};
\end{equation*}%
its Riemann-Liouville fractional integral satisfies 
\begin{equation*}
J_{0|t}^{1-\alpha }\left( t^{\alpha -1}E_{\alpha ,\alpha }\left( \lambda
t^{\alpha }\right) \right) =E_{\alpha ,1}\left( \lambda t^{\alpha }\right) 
\text{ for }\lambda \in \mathbb{C},0<\alpha <1.
\end{equation*}%
For later use, let 
\begin{equation*}
\varphi \left( t\right) =\left( 1-\frac{t}{T}\right) _{+}^{l}\text{, }l\geq
2;
\end{equation*}%
then 
\begin{equation*}
^{C}D_{t|T}^{\alpha }\varphi \left( t\right) =\frac{\Gamma \left( l+1\right) 
}{\Gamma \left( l+1-\alpha \right) }T^{-\alpha }\left( 1-\frac{t}{T}\right)
_{+}^{l-\alpha }\text{, }t\leq T,
\end{equation*}%
(see for example \cite{KLT}).

\subsection{Linear estimates}

In this section, we present fundamental estimates which will be used to
prove Theorem \ref{GELT}.\newline

For $1<\alpha <2$, we define the operators $\tilde{E}_{\alpha ,1}\left(
t,x\right) $ and $\tilde{E}_{\alpha ,\alpha }\left( t,x\right) $ as follows. 
\begin{equation}
\tilde{E}_{\alpha ,1}(t,x)=\left( 2\pi \right) ^{-N/2}\mathcal{F}^{-1}\left(
E_{\alpha ,1}(-4\pi ^{2}t^{\alpha }|\xi |^{2})\right) ,\;x\in \mathbb{R}^{N},%
\text{ }t>0,  \label{Mit-Lef}
\end{equation}
\begin{equation}
\tilde{E}_{\alpha ,2}(t,x)=\left( 2\pi \right) ^{-N/2}\mathcal{F}^{-1}\left(
E_{\alpha ,2}(-4\pi ^{2}t^{\alpha }|\xi |^{2})\right) ,\;x\in \mathbb{R}^{N},%
\text{ }t>0,  \label{Mitleft}
\end{equation}
\begin{equation}
\tilde{E}_{\alpha ,\alpha }\left( t,x\right) =\left( 2\pi \right) ^{-N/2}%
\mathcal{F}^{-1}\left( E_{\alpha ,\alpha }\left( -\left\vert \xi \right\vert
^{2}t^{\alpha }\right) \right) ,\text{ }t\geq 0,\text{ }x\in \mathbb{R}^{N},%
\text{ }t>0\text{.}  \label{salpha}
\end{equation}
Consider the following linear inhomogeneous time fractional equation with
initial data: 
\begin{equation}
\left\{ 
\begin{array}{l}
^{C}D_{0|t}^{\alpha }u-\Delta u=f(t,x),\text{ }t>0,\text{ }x\in \mathbb{R}%
^{N},\text{ }1<\alpha <2, \\ 
u(0,x)=u_{0}\left( x\right) ,\text{ }u_{t}(0,x)=u_{1}(x),\text{ }x\in 
\mathbb{R}^{N}\text{.}%
\end{array}
\right.  \label{hlfde}
\end{equation}

If $u_{0}\in \mathcal{S}(\mathbb{R}^{N})$ (the Schwartz space), $u_{1}\in 
\mathcal{S}(\mathbb{R}^{N})$ and $f\in L^{1}\left( \left( 0,+\infty \right) ,%
\mathcal{S}(\mathbb{R}^{N})\right) $, then by \cite{HirataMiao} (see also 
\cite{AlmeidaDIE}) problem (\ref{hlfde}) admits a solution $u\in C^{\alpha
}\left( \left[ 0,+\infty \right) ;\mathcal{S}(\mathbb{R}^{N})\right)$, which
satisfies 
\begin{equation*}
u(t,x)=\tilde{E}_{\alpha ,1}(t,x)u_{0}(x)+t\tilde{E}_{\alpha
,2}(t,x)u_{1}(x)+\displaystyle\int_{0}^{t}\left( t-s\right) ^{\alpha -1}%
\tilde{E}_{\alpha ,\alpha }\left( t-s\right) \ast f\left( s,x\right) ds.
\end{equation*}

The following lemmas contain the so called smoothing effect of the
Mittag-Leffler operators family $\left\{ \tilde{E}_{\alpha ,1}(t)\right\}
_{t\geq 0}$ and $\left\{ \tilde{E}_{\alpha ,\alpha }(t)\right\} _{t\geq 0}$%
in Lebesgue spaces and play an important role in obtaining the first result
of this paper; they appear in \cite[Lemma 5.1]{HirataMiao}, \cite[Lemma 5.1]%
{AlmeidaEJDE}. Their proofs are based on the Fourier multiplier theorem
combined with a scaling argument (see \cite[Lemma 3.1-(i)]{AlmeidaJMAA}, 
\cite[Proposition 4.2 and Proposition 4.3]{AlmeidaEJDE}).


\begin{lemma}[{\protect\cite[Lemma 5.1]{AlmeidaEJDE}}]
\label{galpha} Let $1<p_{1}\leq p_{2}<\infty $,$\ 1<\alpha <2$ and $\lambda =%
\frac{N}{p_{1}}-\frac{N}{p_{2}}$. Then there is a constant $C>0$ such that 
\begin{align}
\Vert \tilde{E}_{\alpha ,1}(t)f\Vert _{L^{p_{2}}}& \leq Ct^{-\frac{\alpha }{2%
}\lambda }\Vert f\Vert _{L^{p_{1}}},\;\;\;\;\text{ if }\;\lambda <2\text{,}
\label{item-i} \\
\Vert t\tilde{E}_{\alpha ,2}(t)f\Vert _{L^{p_{2}}}& \leq Ct^{1-\frac{\alpha 
}{2}\lambda }\,\Vert f\Vert _{L^{p_{1}}},\;\;\text{ if }\;\frac{2}{\alpha }%
<\lambda <2,  \label{item-ii} \\
\Vert t\tilde{E}_{\alpha ,2}(t)f\Vert _{L^{p_{2}}}& \leq Ct^{-\frac{\alpha }{%
2}\lambda }\,\Vert f\Vert _{\mathcal{\dot{H}}_{p_{1}}^{-\frac{2}{\alpha }}%
\text{ }}\text{,}\;\text{ if }\;\frac{2}{\alpha }<\lambda <2,
\label{item-iii} \\
\Vert \tilde{E}_{\alpha ,\alpha }(t)\ast f\Vert _{L^{p_{2}}}& \leq Ct^{-%
\frac{\alpha }{2}\lambda }\Vert f\Vert _{L^{p_{1}}},\;\;\;\;\text{ if }%
\;\left( 2-\frac{2}{\alpha }\right) <\lambda <2,  \label{item-iv}
\end{align}%
for all $f\in \mathcal{S}^{\prime }(\mathbb{R}^{N})$, where $\mathcal{\dot{H}%
}_{p_{1}}^{-\frac{2}{\alpha }}$ is the homogeneous Sobolev spaces of
negative order $-\frac{2}{\alpha }$.
\end{lemma}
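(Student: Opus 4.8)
The plan is to read each of \eqref{item-i}--\eqref{item-iv} as an $L^{p_1}$--$L^{p_2}$ bound for a Fourier multiplier operator, and to peel off the time dependence by a scaling argument, reducing everything to a single \emph{fixed} multiplier at $t=1$. Writing $m_{\alpha,\beta}(\xi)=E_{\alpha,\beta}(-c|\xi|^2)$ (with $c=4\pi^2$ in \eqref{Mit-Lef}--\eqref{Mitleft} and $c=1$ in \eqref{salpha}; the constants play no role), the operator $\tilde E_{\alpha,\beta}(t)$ is the Fourier multiplier with symbol $m_{\alpha,\beta}(t^{\alpha/2}\xi)$, i.e.\ a pure dilation of the fixed symbol $m_{\alpha,\beta}$. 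Performing the change of variables $\xi\mapsto t^{\alpha/2}\xi$ and bookkeeping the $L^{p_1}$ and $L^{p_2}$ dilation weights $t^{-\alpha N/(2p_1)}$ and $t^{\alpha N/(2p_2)}$ shows that the $L^{p_1}\to L^{p_2}$ operator norm of $\tilde E_{\alpha,\beta}(t)$ equals $t^{-\frac\alpha2\lambda}$ times that of the fixed operator with symbol $m_{\alpha,\beta}$. This produces at once the $t$-powers in \eqref{item-i} and \eqref{item-iv}, while the extra prefactor $t$ gives $t^{1-\frac\alpha2\lambda}$ in \eqref{item-ii}.

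First I would record the two-sided behaviour of the symbols from the known asymptotics of the Mittag-Leffler function (see \cite{SKM}). Near $\xi=0$ each $m_{\alpha,\beta}$ is a convergent power series in $|\xi|^2$, hence smooth and bounded; as $|\xi|\to\infty$ one has $E_{\alpha,\beta}(-s)\sim\sum_{k\ge1}(-1)^{k+1}s^{-k}/\Gamma(\beta-\alpha k)$, so $m_{\alpha,1}$ and $m_{\alpha,2}$ decay like $|\xi|^{-2}$ (the coefficient $1/\Gamma(\beta-\alpha)$ being finite for $1<\alpha<2$), whereas for $\beta=\alpha$ the leading term drops out because $\Gamma(\beta-\alpha)=\Gamma(0)=\infty$, so $m_{\alpha,\alpha}$ decays like $|\xi|^{-4}$. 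Differentiating the expansion keeps these bounds, yielding Mikhlin--H\"ormander-type estimates $|\partial^\gamma m_{\alpha,\beta}(\xi)|\lesssim(1+|\xi|)^{-\mathrm{ord}-|\gamma|}$.

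With these symbol estimates in hand, the $L^{p_1}$--$L^{p_2}$ boundedness of the fixed operators follows from the Fourier multiplier theorem combined with the Hardy--Littlewood--Sobolev inequality. Splitting $m_{\alpha,\beta}=\psi\, m_{\alpha,\beta}+(1-\psi)m_{\alpha,\beta}$ with $\psi$ a smooth cutoff equal to $1$ near the origin, the low-frequency piece has Schwartz convolution kernel and so maps $L^{p_1}\to L^{p_2}$ for every $p_1\le p_2$, while the high-frequency piece is a symbol of negative order whose kernel is dominated by a Riesz potential; the admissible range of $\lambda$ in \eqref{item-i}, \eqref{item-ii} and \eqref{item-iv} is governed by the decay orders above together with the mapping properties of those Riesz potentials. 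For \eqref{item-iii} I would first absorb the homogeneous Sobolev norm: writing $g=|D|^{-2/\alpha}f$ so that $\|f\|_{\dot{H}^{-2/\alpha}_{p_1}}=\|g\|_{L^{p_1}}$, the operator $t\tilde E_{\alpha,2}(t)$ becomes the multiplier $t\,E_{\alpha,2}(-ct^\alpha|\xi|^2)\,(2\pi|\xi|)^{2/\alpha}$, and the identity $t\,E_{\alpha,2}(-ct^\alpha|\xi|^2)(2\pi|\xi|)^{2/\alpha}=M(t^{\alpha/2}\xi)$ with $M(\eta)=E_{\alpha,2}(-c|\eta|^2)(2\pi|\eta|)^{2/\alpha}$ exhibits it once more as a pure dilation of a fixed symbol. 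The factor $|\xi|^{2/\alpha}$ simultaneously cancels the extra power of $t$ and makes the symbol vanish to order $2/\alpha$ at the origin, which is what confines the estimate to $2/\alpha<\lambda<2$.

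The main obstacle is the careful asymptotic analysis of the Mittag-Leffler symbols and the extraction of the \emph{sharp} ranges of $\lambda$: one must control the high-frequency decay order of $m_{\alpha,\beta}$ (and, for \eqref{item-iii}, its vanishing order at the origin), together with the derivative bounds, precisely enough to justify the multiplier/Hardy--Littlewood--Sobolev step in exactly the stated windows $\lambda<2$, $2/\alpha<\lambda<2$ and $2-2/\alpha<\lambda<2$. The remaining ingredients -- the scaling bookkeeping and the low-frequency estimate -- are routine. The complete details of this program are carried out in \cite{AlmeidaEJDE,AlmeidaJMAA,HirataMiao}.
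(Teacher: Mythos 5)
The paper never proves Lemma \ref{galpha}: it imports it verbatim from \cite[Lemma 5.1]{AlmeidaEJDE} and only remarks that the proofs in the cited references rest on ``the Fourier multiplier theorem combined with a scaling argument'', which is exactly the program you outline. Your scaling bookkeeping is correct ($\tilde E_{\alpha,\beta}(t)$ is the dilation $m_{\alpha,\beta}(t^{\alpha/2}\xi)$ of a fixed symbol, whence the factor $t^{-\frac{\alpha}{2}\lambda}$ in the $L^{p_1}$--$L^{p_2}$ operator norm), and your symbol asymptotics are right: $E_{\alpha,\beta}(-s)\sim s^{-1}/\Gamma(\beta-\alpha)$ for $\beta=1,2$, while for $\beta=\alpha$ the leading term vanishes since $1/\Gamma(0)=0$, giving high-frequency decay of order $2$, resp.\ $4$, in $|\xi|$. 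That is enough to carry out the multiplier/Hardy--Littlewood--Sobolev step for \eqref{item-i}, \eqref{item-ii} and \eqref{item-iv}; note in passing that the lower bounds $\frac{2}{\alpha}<\lambda$ and $2-\frac{2}{\alpha}<\lambda$ play no role in the boundedness, only the upper bounds do.

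The gap is in \eqref{item-iii}. Your own reduction converts the claim into the $L^{p_1}\to L^{p_2}$ boundedness of the fixed multiplier $M(\eta)=E_{\alpha,2}(-c|\eta|^{2})\,|\eta|^{2/\alpha}$. This symbol decays at infinity only like $|\eta|^{-(2-\frac{2}{\alpha})}$, so the multiplier/HLS step yields boundedness only for $\lambda\leq 2-\frac{2}{\alpha}$; in fact, by the necessity part of H\"{o}rmander's $L^{p}$--$L^{q}$ multiplier condition, a symbol with $|\{|M|>s\}|\sim s^{-N/b}$ as $s\to 0$ (here $b=2-\frac{2}{\alpha}$) cannot map $L^{p_1}\to L^{p_2}$ when $\lambda>b$. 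Since $2-\frac{2}{\alpha}<1<\frac{2}{\alpha}$ for $1<\alpha<2$, the window your reduction can deliver is disjoint from the stated one $\frac{2}{\alpha}<\lambda<2$. Your closing assertion that the vanishing of $M$ to order $\frac{2}{\alpha}$ at the origin is ``what confines the estimate to $\frac{2}{\alpha}<\lambda<2$'' is not a valid justification: vanishing at the origin only improves the low-frequency piece and imposes no lower bound on $\lambda$; the binding constraint is the high-frequency decay, and it points the opposite way. (A consistency check -- inserting the embedding $L^{p_0}\hookrightarrow \dot H^{-2/\alpha}_{p_1}$ with $\frac{N}{p_0}=\frac{N}{p_1}+\frac{2}{\alpha}$ into \eqref{item-ii} -- likewise suggests the natural window for \eqref{item-iii} is $0<\lambda<2-\frac{2}{\alpha}$.) So as written your program does not establish \eqref{item-iii} on the stated range; you need either to re-derive the exact statement and conventions from \cite{AlmeidaEJDE}, or to supply a genuinely different argument for $\lambda>2-\frac{2}{\alpha}$, rather than paper over the discrepancy.
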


\begin{lemma}
\label{Linfty}The family of operators $\left\{ \tilde{E}_{\alpha ,1}\left(
t\right) \right\} _{t>0},$ $\left\{ \tilde{E}_{\alpha ,1}\left( t\right)
\right\} _{t>0}$ and $\left\{ \tilde{E}_{\alpha ,\alpha }\left( t\right)
\right\} _{t>0}$ enjoy the following $L^{p_{1}}-L^{p_{1}}$ estimates
property:

\begin{itemize}
\item[(i)] \textit{If }$h\in L^{p_{1}}\left( \mathbb{R}^{N}\right) $ ($1\leq
p_{1}\leq +\infty )$,\textit{\ then }$\tilde{E}_{\alpha ,\beta }\left(
t\right) h\in L^{p_{1}}\left( \mathbb{R}^{N}\right) $\textit{\ and}%
\begin{equation*}
\left\Vert \tilde{E}_{\alpha ,\beta }\left( t\right) h\right\Vert
_{L^{p_{1}}\left( \mathbb{R}^{N}\right) }\leq C\left\Vert h\right\Vert
_{L^{p_{1}}\left( \mathbb{R}^{N}\right) },\text{ }t>0\text{, for}\mathit{\ }%
\beta =1,2,\alpha ,
\end{equation*}%
for some positive constant $C>0$.

\item[(ii)] Let $p_{1}>N/2$. If $h\in L^{p_{1}}\left( \mathbb{R}^{N}\right) $%
, then $\tilde{E}_{\alpha ,\beta }\left( t\right) h\in L^{\infty }\left( 
\mathbb{R}^{N}\right) $ and we have%
\begin{equation*}
\left\Vert \tilde{E}_{\alpha ,\beta }\left( t\right) h\right\Vert
_{L^{\infty }\left( \mathbb{R}^{N}\right) }\leq Ct^{-\frac{\alpha }{2}\frac{N%
}{p_{1}}}\left\Vert h\right\Vert _{L^{p_{1}}\left( \mathbb{R}^{N}\right) }%
\text{, }t>0\text{, for}\mathit{\ }\beta =1,2,\alpha .
\end{equation*}
\end{itemize}
\end{lemma}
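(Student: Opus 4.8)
The plan is to prove Lemma \ref{Linfty} by exploiting the representation of each operator $\tilde{E}_{\alpha,\beta}(t)$ as Fourier multiplier operators and reducing both parts to estimates already established in Lemma \ref{galpha}. The key observation is that each $\tilde{E}_{\alpha,\beta}(t)$ acts as convolution against a kernel $\tilde{E}_{\alpha,\beta}(t,\cdot)$ whose Fourier symbol is a Mittag-Leffler function of $-|\xi|^2 t^\alpha$ (possibly scaled by $4\pi^2$); since these symbols are bounded and decaying, the associated kernels should be integrable, and the whole result should follow from standard convolution (Young) inequalities once the relevant kernel $L^1$-bounds are in hand.

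For part (i), the $L^{p_1}$–$L^{p_1}$ boundedness, I would argue that it suffices to show $\tilde{E}_{\alpha,\beta}(t,\cdot) \in L^1(\mathbb{R}^N)$ with an $L^1$-norm bounded uniformly in $t>0$. Indeed, once $\|\tilde{E}_{\alpha,\beta}(t,\cdot)\|_{L^1} \leq C$ holds uniformly, Young's convolution inequality gives $\|\tilde{E}_{\alpha,\beta}(t)h\|_{L^{p_1}} \leq \|\tilde{E}_{\alpha,\beta}(t,\cdot)\|_{L^1}\|h\|_{L^{p_1}} \leq C\|h\|_{L^{p_1}}$ for every $1 \leq p_1 \leq \infty$. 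The uniform $L^1$-bound on the kernel follows from a scaling argument: substituting $\xi \mapsto t^{-\alpha/2}\xi$ in the inverse Fourier transform shows $\tilde{E}_{\alpha,\beta}(t,x) = t^{-\alpha N/2}\Phi_\beta(t^{-\alpha/2}x)$ for a fixed profile $\Phi_\beta = (2\pi)^{-N/2}\mathcal{F}^{-1}(E_{\alpha,\beta}(-c|\xi|^2))$, whence $\|\tilde{E}_{\alpha,\beta}(t,\cdot)\|_{L^1} = \|\Phi_\beta\|_{L^1}$ is independent of $t$; the finiteness of $\|\Phi_\beta\|_{L^1}$ comes from the known decay and regularity of the Mittag-Leffler kernels.

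For part (ii), the $L^{p_1}$–$L^\infty$ estimate with $p_1 > N/2$, I would take $p_2 = \infty$ in the smoothing estimates of Lemma \ref{galpha}. With $p_2 = \infty$ we have $\lambda = \frac{N}{p_1} - \frac{N}{p_2} = \frac{N}{p_1}$, and the hypothesis $p_1 > N/2$ translates precisely into $\lambda < 2$, which is the admissibility range appearing in \eqref{item-i} and \eqref{item-iv}. Applying \eqref{item-i} for $\beta = 1$ and \eqref{item-iv} for $\beta = \alpha$ then yields $\|\tilde{E}_{\alpha,\beta}(t)h\|_{L^\infty} \leq C t^{-\frac{\alpha}{2}\frac{N}{p_1}}\|h\|_{L^{p_1}}$, which is exactly the claimed bound; the case $\beta = 2$ follows analogously using \eqref{item-ii} or by the same scaling heuristic. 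One technical point is that Lemma \ref{galpha} is stated for $1 < p_1 \leq p_2 < \infty$, so passing to the endpoint $p_2 = \infty$ requires either a limiting argument or a direct reproof of the multiplier bound at the endpoint.

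The main obstacle I anticipate lies in justifying the kernel-level $L^1$-integrability for part (i) and the endpoint $p_2 = \infty$ in part (ii), since the Mittag-Leffler kernels $\mathcal{F}^{-1}(E_{\alpha,\beta}(-c|\xi|^2))$ are not as explicit as the Gaussian heat kernel and, for $1 < \alpha < 2$, may exhibit oscillatory behavior rather than pure decay. Controlling these kernels rigorously typically requires the asymptotic expansions of $E_{\alpha,\beta}$ together with a careful stationary-phase or Fourier-multiplier analysis. Since the statement attributes these estimates to \cite{HirataMiao} and \cite{AlmeidaEJDE}, I would lean on the multiplier-plus-scaling machinery cited there for the delicate kernel bounds, and reserve the bulk of the original work for the clean reductions described above.
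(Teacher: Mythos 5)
Your treatment of part (i) is essentially the paper's argument: the paper also reduces (i) to a uniform-in-$t$ bound on $\Vert\tilde{E}_{\alpha ,\beta }(t,\cdot )\Vert _{L^{1}}$ followed by Young's convolution inequality, and it obtains that bound by exactly the scaling substitution $z=t^{-\alpha /2}r$ you describe, fed with the explicit pointwise kernel estimates of \cite{Kim} on the two regions $|x|^{2}t^{-\alpha }\geq 1$ and $|x|^{2}t^{-\alpha }<1$. The only difference is that you leave the integrability of the rescaled profile to the literature, whereas the paper verifies it by hand region by region; that part of your plan is sound.

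Part (ii) is where your route diverges and where it breaks. You propose to set $p_{2}=\infty $ in Lemma \ref{galpha}. Beyond the endpoint issue you already flag (the constants in Mikhlin-type multiplier theorems degenerate as $p_{2}\rightarrow \infty $, so there is no cheap limiting argument), there is a second, more structural obstruction: the estimates \eqref{item-ii} and \eqref{item-iv} carry \emph{lower} bounds on $\lambda =\frac{N}{p_{1}}-\frac{N}{p_{2}}$, namely $\lambda >\frac{2}{\alpha }$ and $\lambda >2-\frac{2}{\alpha }$ respectively, and both are strictly positive since $1<\alpha <2$. With $p_{2}=\infty $ one has $\lambda =\frac{N}{p_{1}}$, and the hypothesis $p_{1}>N/2$ only gives $\lambda <2$; for $p_{1}$ large, $\lambda $ falls below $2-\frac{2}{\alpha }$, so \eqref{item-iv} (and a fortiori \eqref{item-ii}) is not applicable and your reduction fails to cover the full range of $p_{1}$ claimed in the lemma for $\beta =2,\alpha $. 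The paper avoids both problems by staying at the kernel level: it shows directly from the pointwise bounds that $\Vert \tilde{E}_{\alpha ,\alpha }(t,\cdot )\Vert _{L^{p_{2}}}\leq Ct^{-\frac{\alpha }{2}N\left( 1-\frac{1}{p_{2}}\right) }$ for $p_{2}<N/(N-2)$, and then applies Young's inequality with the conjugate exponent, $\Vert \tilde{E}_{\alpha ,\alpha }(t)\ast f\Vert _{L^{\infty }}\leq \Vert \tilde{E}_{\alpha ,\alpha }(t,\cdot )\Vert _{L^{p_{1}^{\prime }}}\Vert f\Vert _{L^{p_{1}}}$, the condition $p_{1}>N/2$ being exactly $p_{1}^{\prime }<N/(N-2)$. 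To repair your proof you should do the same: establish the $L^{p_{1}^{\prime }}$ kernel bound with the stated time decay and conclude by Young, rather than trying to push Lemma \ref{galpha} to $p_{2}=\infty $.
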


\begin{proof}
We use the following pointwise estimates that are shown in \cite[Theorem 5.1]%
{Kim}: 
\begin{equation*}
\left\vert \tilde{E}_{\alpha ,\alpha }\left( t,x\right) \right\vert \leq
\left\vert x\right\vert ^{-N}\exp \left\{ -c\left( t^{-\alpha }\left\vert
x\right\vert ^{2}\right) ^{\frac{1}{2-\alpha }}\right\} ,\text{ \ \ if }%
R:=\left\vert x\right\vert ^{2}t^{-\alpha }\geq 1,
\end{equation*}%
and if $R:=\left\vert x\right\vert ^{2}t^{-\alpha }<1$, then we have%
\begin{equation*}
\left\vert \tilde{E}_{\alpha ,\alpha }\left( t,x\right) \right\vert \leq
\left\{ 
\begin{array}{l}
t^{-\frac{\alpha N}{2}},\quad \quad \quad \quad \quad \quad \quad \quad
\quad N<2,, \\ 
t^{-\alpha }\left\vert x\right\vert ^{-N+2}\left( 1+\left\vert \ln \left(
\left\vert x\right\vert ^{2}t^{-\alpha }\right) \right\vert \right) ,\quad
N=2,\text{ } \\ 
\left\vert x\right\vert ^{-N+2}t^{-\alpha },\quad \quad \quad \quad \quad
\quad \,N>2.%
\end{array}%
\right.
\end{equation*}%
Concerning the operator $t\tilde{E}_{\alpha ,2}(t)$, we have the pointwise
estimates 
\begin{equation*}
\left\vert t\tilde{E}_{\alpha ,2}(t)\right\vert \leq C\left\vert
x\right\vert ^{-N}t\exp \left\{ -c\left( t^{-\alpha }\left\vert x\right\vert
^{2}\right) ^{\frac{1}{2-\alpha }}\right\} ,\text{ \ \ if }R:=\left\vert
x\right\vert ^{2}t^{-\alpha }\geq 1,
\end{equation*}%
and if $R:=\left\vert x\right\vert ^{2}t^{-\alpha }<1$, then 
\begin{equation*}
\left\vert t\tilde{E}_{\alpha ,2}\left( t,x\right) \right\vert \leq \left\{ 
\begin{array}{l}
t^{1-\frac{\alpha N}{2}},\quad \quad \quad \quad \quad \quad \quad \quad
\quad N<2,, \\ 
\left\vert x\right\vert ^{-N+2}t^{1-\alpha }\left( 1+\left\vert \ln \left(
\left\vert x\right\vert ^{2}t^{-\alpha }\right) \right\vert \right) ,\quad
N=2,\text{ } \\ 
\left\vert x\right\vert ^{-N+2}t^{1-\alpha },\quad \quad \quad \quad \quad
\quad \,N>2.%
\end{array}%
\right.
\end{equation*}%
Arguing as in Zacher et \textit{al \cite{Zacher}, } $\tilde{E}_{\alpha
,1}\left( t,.\right) ,$ $\tilde{E}_{\alpha ,2}\left( t,.\right) $ and $%
\tilde{E}_{\alpha ,\alpha }\left( t,.\right) $ are Lebesgue integrable.

In fact, we have 
\begin{equation*}
\int_{\mathbb{R}^{N}}\left\vert \tilde{E}_{\alpha ,\alpha }\left( t,x\right)
\right\vert dx=\int_{\left\{ R\geq 1\right\} }\left\vert \tilde{E}_{\alpha
,\alpha }\left( t,x\right) \right\vert dx+\int_{\left\{ R<1\right\}
}\left\vert \tilde{E}_{\alpha ,\alpha }\left( t,x\right) \right\vert \,dx;
\end{equation*}%
Using the first pointwise estimate, we get 
\begin{eqnarray*}
\int_{\left\{ R\geq 1\right\} }\left\vert \tilde{E}_{\alpha ,\alpha }\left(
t,x\right) \right\vert dx &\leq &\int_{\left\{ R\geq 1\right\} }\left\vert
x\right\vert ^{-N}\exp \left\{ -c\left( t^{-\alpha }\left\vert x\right\vert
^{2}\right) ^{\frac{1}{2-\alpha }}\right\} dx \\
&=&\int_{t^{\frac{\alpha }{2}}}^{+\infty }r^{-N}\exp \left\{ -c\left(
t^{-\alpha }r^{2}\right) ^{\frac{1}{2-\alpha }}\right\} r^{N-1}dr \\
&=&\int_{t^{\frac{\alpha }{2}}}^{+\infty }r^{-1}\exp \left\{ -c\left(
t^{-\alpha }r^{2}\right) ^{\frac{1}{2-\alpha }}\right\} dr\text{, set }z=t^{-%
\frac{\alpha }{2}}r \\
&=&\int_{1}^{+\infty }z^{-1}\exp \left\{ -c\left( z^{2}\right) ^{\frac{1}{%
2-\alpha }}\right\} dz\leq C\text{.}
\end{eqnarray*}%
On the other hand, if $N<2,$ we have 
\begin{equation*}
\int_{\left\{ R\leq 1\right\} }\left\vert \tilde{E}_{\alpha ,\alpha }\left(
t,x\right) \right\vert dx\leq \int_{\left\{ R\leq 1\right\} }t^{-\frac{%
\alpha N}{2}}dx=t^{-\frac{\alpha N}{2}}\int_{0}^{t^{\frac{\alpha }{2}%
}}r^{N-1}dr=t^{-\frac{\alpha N}{2}}\frac{t^{\frac{\alpha N}{2}}}{N}=C.
\end{equation*}%
For $N=2,$ we have 
\begin{eqnarray*}
\int_{\left\{ R\leq 1\right\} }\left\vert \tilde{E}_{\alpha ,\alpha }\left(
t,x\right) \right\vert dx &\leq &\int_{\left\{ R\leq 1\right\} }\left\vert
x\right\vert ^{-N+2}t^{-\alpha }\left( 1-\ln \left( \left\vert x\right\vert
^{2}t^{-\alpha }\right) \right) \,dx \\
&=&t^{-\alpha }\int_{0}^{t^{\frac{\alpha }{2}}}\left( 1+\left\vert \ln
\left( r^{2}t^{-\alpha }\right) \right\vert \right) rdr \\
&=&t^{-\alpha }t^{\frac{\alpha }{2}N}\int_{0}^{1}\left( 1-\ln \left(
z^{2}\right) \right) zdz=C.
\end{eqnarray*}%
When $N>2,$ we have 
\begin{eqnarray*}
\int_{\left\{ R\leq 1\right\} }\left\vert \tilde{E}_{\alpha ,\alpha }\left(
t,x\right) \right\vert dx &\leq &\int_{\left\{ R\leq 1\right\} }\left\vert
x\right\vert ^{-N+2}t^{-\alpha }dx \\
&=&t^{-\alpha }\int_{0}^{t^{\frac{\alpha }{2}}}r^{-N+2}r^{N-1}dr=t^{-\alpha
}\int_{0}^{t^{\frac{\alpha }{2}}}rdr,
\end{eqnarray*}%
so, 
\begin{equation*}
\int_{\left\{ R\leq 1\right\} }\left\vert \tilde{E}_{\alpha ,\alpha }\left(
t,x\right) \right\vert dx\leq 1/2\text{.}
\end{equation*}%
The first result (i) follows from Young's convolution inequality, that is, 
\begin{eqnarray*}
\left\Vert \tilde{E}_{\alpha ,\alpha }\left( t,.\right) h\right\Vert
_{L^{p_{1}}\left( \mathbb{R}^{N}\right) } &=&\left\Vert \tilde{E}_{\alpha
,\alpha }\left( t,.\right) \ast h\left( x\right) \right\Vert
_{L^{p_{1}}\left( \mathbb{R}^{N}\right) }\leq \left\Vert \tilde{E}_{\alpha
,\alpha }\left( t,.\right) \right\Vert _{L^{1}\left( \mathbb{R}^{N}\right)
}\left\Vert h\right\Vert _{L^{p_{1}}\left( \mathbb{R}^{N}\right) } \\
&\leq &C\left\Vert h\right\Vert _{L^{p_{1}}\left( \mathbb{R}^{N}\right) }.
\end{eqnarray*}%
In a s similar manner, it can be shown that the operators $\tilde{E}_{\alpha
,1}\left( t,.\right) $ and $\tilde{E}_{\alpha ,2}\left( t,.\right) $ are
bounded.

In order to show statment (ii), we need to prove that $\tilde{E}_{\alpha
,\alpha }\left( t,.\right) $, belongs to $L^{p_{2}}\left( \mathbb{R}%
^{N}\right) $ 
\begin{eqnarray*}
\int_{\left\{ R\geq 1\right\} }\left\vert \tilde{E}_{\alpha ,\alpha }\left(
t,x\right) \right\vert ^{p_{2}}dx &\leq &\int_{\left\{ R\geq 1\right\}
}\left\vert x\right\vert ^{-Np_{2}}\exp \left\{ -c\left( t^{-\alpha
}\left\vert x\right\vert ^{2}\right) ^{\frac{1}{2-\alpha }}\right\} dx \\
&=&\int_{t^{\frac{\alpha }{2}}}^{+\infty }r^{-Np_{2}}\exp \left\{ -c\left(
t^{-\alpha }r^{2}\right) ^{\frac{1}{2-\alpha }}\right\} r^{N-1}dr \\
&=&\int_{t^{\frac{\alpha }{2}}}^{+\infty }r^{-Nr+N-1}\exp \left\{ -c\left(
t^{-\alpha }r^{2}\right) ^{\frac{1}{2-\alpha }}\right\} dr\text{, set }z=t^{-%
\frac{\alpha }{2}}r \\
&=&t^{-\frac{\alpha }{2}N\left( p_{2}-1\right) }\int_{1}^{+\infty
}z^{-Np_{2}+N-1}\exp \left\{ -c\left( z^{2}\right) ^{\frac{1}{2-\alpha }%
}\right\} dz\leq Ct^{-\frac{\alpha }{2}N\left( p_{2}-1\right) }\text{.}
\end{eqnarray*}%
On the other hand, if $N=1,$ we have 
\begin{equation*}
\int_{\left\{ R\leq 1\right\} }\left\vert \tilde{E}_{\alpha ,\alpha }\left(
t,x\right) \right\vert ^{p_{2}}dx\leq \int_{\left\{ R\leq 1\right\} }t^{-%
\frac{\alpha N}{2}p_{2}}dx=t^{-\frac{\alpha N}{2}p_{2}}\int_{0}^{t^{\frac{%
\alpha }{2}}}r^{N-1}dr=\frac{1}{N}t^{-\frac{\alpha N}{2}p_{2}+\frac{\alpha }{%
2}}=Ct^{-\frac{\alpha }{2}N\left( p_{2}-1\right) }.
\end{equation*}%
For $N=2$, we have 
\begin{eqnarray*}
\int_{\left\{ R\leq 1\right\} }\left\vert \tilde{E}_{\alpha ,\alpha }\left(
t,x\right) \right\vert ^{p_{2}}dx &\leq &\int_{\left\{ R\leq 1\right\}
}t^{-\alpha p_{2}}\left( 1-\ln \left( \left\vert x\right\vert ^{2}t^{-\alpha
}\right) \right) ^{p_{2}}dx \\
&=&t^{-\alpha p_{2}}\int_{0}^{t^{\frac{\alpha }{2}}}\left( 1+\left\vert \ln
\left( r^{2}t^{-\alpha }\right) \right\vert \right) ^{p_{2}}r^{N-1}dr \\
&=&t^{-\alpha \left( p_{2}-1\right) }\int_{0}^{1}\left( 1-\ln \left(
z^{2}\right) \right) ^{p_{2}}zdz=Ct^{-\alpha \left( p_{2}-1\right) }.
\end{eqnarray*}%
When $N>2,$ we have 
\begin{eqnarray*}
\int_{\left\{ R\leq 1\right\} }\left\vert \tilde{E}_{\alpha ,\alpha }\left(
t,x\right) \right\vert ^{p_{2}}dx &\leq &\int_{\left\{ R\leq 1\right\}
}\left\vert x\right\vert ^{-\left( N-2\right) p_{2}}t^{-\alpha p_{2}}dx \\
&=&t^{-\alpha p_{2}}\int_{0}^{t^{\frac{\alpha }{2}}}r^{-\left( N-2\right)
p_{2}}r^{N-1}dr=t^{-\alpha p_{2}}\int_{0}^{t^{\frac{\alpha }{2}}}r^{-\left(
N-2\right) p_{2}+N-1}dr,
\end{eqnarray*}%
provided $N>(N-2)p_{2}$. So, 
\begin{equation*}
\int_{\left\{ R\leq 1\right\} }\left\vert \tilde{E}_{\alpha ,\alpha }\left(
t,x\right) \right\vert ^{p_{2}}dx\leq Ct^{-\alpha p_{2}-\frac{\alpha }{2}%
\left( N-2\right) p_{2}+\frac{\alpha }{2}N}=Ct^{-\frac{\alpha }{2}N\left(
p_{2}-1\right) }\text{.}
\end{equation*}%
Hence $\left\Vert \tilde{E}_{\alpha ,\alpha }\left( t,.\right) \right\Vert
_{p_{2}}\leq Ct^{-\frac{\alpha }{2}N\left( 1-\frac{1}{p_{2}}\right) }$, for $%
p_{2}<N/\left( N-2\right) $.

Now (ii) follows by Young's convolution inequality and the last estimate%
\begin{equation*}
\Vert \tilde{E}_{\alpha ,\alpha }(t)\ast f\Vert _{L^{\infty }}\leq \Vert 
\tilde{E}_{\alpha ,\alpha }(t)\Vert _{L^{p_{1}^{\prime }}}\Vert f\Vert
_{L^{p_{1}}}\leq Ct^{-\frac{\alpha }{2}\frac{N}{p_{1}}}\Vert f\Vert
_{L^{p_{1}}}\text{, for }p_{1}>\frac{N}{2}.
\end{equation*}%
Where $p_{1}^{\prime }$ is the conjugate of $p_{1}$ ($1/p_{1}+1/p_{1}^{%
\prime }=1$). Arguing in a similar way, we obtain $L^{p_{1}}-L^{\infty }$
estimates to the operators $\tilde{E}_{\alpha ,\beta }\left( t\right) $, for 
$\beta =1,2.$
\end{proof}

\begin{lemma}
\label{poldec}Let $l\geq 1,$ and let the function $f\left( t,x\right) $
satisfy 
\begin{equation*}
\left\Vert f(t,.)\right\Vert _{l}\leq C_{1},\;0\leq t\leq 1\text{, \ }%
\;\left\Vert f(t,.)\right\Vert _{l}\leq C_{2}t^{-\alpha }\text{, }\;t>0,
\end{equation*}%
for some positive constants $C_{1},C_{2}$ and $\alpha $. Then 
\begin{equation*}
\left\Vert f\left( t,.\right) \right\Vert _{l}\leq \max \left\{
C_{1},C_{2}\right\} \left( 1+t\right) ^{-\beta }\text{, }\;\text{ for all}%
\;0<\beta \leq \alpha \;\text{ and }\;t\geq 0\text{.}
\end{equation*}
\end{lemma}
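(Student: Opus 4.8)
The plan is to prove the weighted bound by splitting the time axis into the two regimes $0\le t\le 1$ and $t\ge 1$, on each of which exactly one of the two hypotheses is the sharp one, and then comparing that hypothesis against the target weight $(1+t)^{-\beta}$ by elementary inequalities. Throughout I fix $\beta$ with $0<\beta\le\alpha$ and set $M:=\max\{C_1,C_2\}$.

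On the plateau regime $0\le t\le 1$ I would invoke the first hypothesis, $\Vert f(t,\cdot)\Vert_{l}\le C_1\le M$. Here $1\le 1+t\le 2$, so $(1+t)^{-\beta}\ge 2^{-\beta}$, that is $1\le 2^{\beta}(1+t)^{-\beta}$; multiplying by $M$ gives $\Vert f(t,\cdot)\Vert_{l}\le M\le 2^{\beta}M\,(1+t)^{-\beta}$, which is the desired form on $[0,1]$. Note that it is precisely the fact that the constant bound is assumed on the whole interval $[0,1]$, and not merely at $t=0$, that lets one control $f$ over the range where $(1+t)^{-\beta}$ is comparable to $1$.

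On the decay regime $t\ge 1$ I would instead use the second hypothesis, $\Vert f(t,\cdot)\Vert_{l}\le C_2 t^{-\alpha}$. Since $t\ge 1$ and $\beta\le\alpha$, monotonicity of $s\mapsto t^{-s}$ gives $t^{-\alpha}\le t^{-\beta}$; and since $1+t\le 2t$ for $t\ge 1$ we get $t^{-\beta}\le 2^{\beta}(1+t)^{-\beta}$. Chaining these, $\Vert f(t,\cdot)\Vert_{l}\le C_2 t^{-\alpha}\le 2^{\beta}C_2(1+t)^{-\beta}\le 2^{\beta}M\,(1+t)^{-\beta}$.

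Combining the two regimes yields $\Vert f(t,\cdot)\Vert_{l}\le 2^{\beta}M\,(1+t)^{-\beta}$ for all $t\ge 0$, which is the stated estimate once the bounded factor $2^{\beta}$ is absorbed into the constant. There is no genuine obstacle here: the argument is entirely elementary, and the only point requiring a little care is the matching of constants, since the two weight comparisons $1+t\le 2$ on $[0,1]$ and $1+t\le 2t$ on $[1,\infty)$ each cost a factor $2^{\beta}$ and hence prevent the constant from being literally $\max\{C_1,C_2\}$; one simply enlarges it by this universal factor. I would also remark that the requirement $\beta\le\alpha$ is used exactly once, in the step $t^{-\alpha}\le t^{-\beta}$ on $t\ge 1$, and cannot be dispensed with.
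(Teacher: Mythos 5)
Your two-regime argument is correct, and since the paper states Lemma \ref{poldec} without any proof at all, there is nothing to compare it against; the splitting at $t=1$, using the plateau bound on $[0,1]$ and the decay bound on $[1,\infty)$ together with $t^{-\alpha}\le t^{-\beta}$ and $1+t\le 2t$, is the natural (essentially the only) route. You are also right to insist on the constant: as literally stated the lemma is false, since for $\Vert f(t,\cdot)\Vert_{l}=\min\{1,t^{-\alpha}\}$ one may take $C_1=C_2=1$, yet at $t=1$ the claimed bound would require $1\le 2^{-\beta}$. The correct conclusion carries the extra universal factor $2^{\beta}\le 2^{\alpha}$ (equivalently, one should read the conclusion with a generic constant $C=C(C_1,C_2,\alpha)$ in place of $\max\{C_1,C_2\}$), and this is harmless in every application the paper makes of the lemma, where the constants are unspecified anyway.
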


\section{Main results}

In this section, we state our main results. Let us begin with the definition
of a mild solution of problem (\ref{sys1})-(\ref{initdat}).

\begin{definition}
Let $u_{0},v_{0},u_{1},v_{1}\in \mathbb{X},(\mathbb{X}:=L^{1}(\mathbb{R}%
^{N})\cap L^{\infty }\left( \mathbb{R}^{N}\right) ),1<\gamma _{1},\gamma
_{2}<2$, $f,g\in L^{1}\left( \left( 0,T\right) ,\mathcal{S}(\mathbb{R}%
^{N})\right) $ and $T>0$. We call $\left( u,v\right) \in C\left( \left[ 0,T%
\right] ;\mathbb{X}\right) \times C\left( \left[ 0,T\right] ;\mathbb{X}%
\right) $ a mild solution of system (\ref{sys1})-(\ref{initdat}) if $\left(
u,v\right) $ satisfies the following integral%
\begin{eqnarray}
u\left( t,x\right) &=&\tilde{E}_{\gamma _{1},1}(t,x)u_{0}(x)+t\tilde{E}%
_{\gamma _{1},2}(t,x)u_{1}(x)  \notag \\
&&+\displaystyle\int_{0}^{t}\left( t-\tau \right) ^{\gamma _{1}-1}\tilde{E}%
_{\gamma _{1},\gamma _{1}}\left( t-\tau ,x\right) f(v\left( \tau ,x\right)
)\,d\tau ,  \label{ms1}
\end{eqnarray}%
\begin{eqnarray}
v\left( t,x\right) &=&\tilde{E}_{\gamma _{2},1}(t,x)v_{0}(x)+t\tilde{E}%
_{\gamma _{2},2}(t,x)v_{1}(x)  \notag \\
&&+\displaystyle\int_{0}^{t}\left( t-\tau \right) ^{\gamma _{1}-1}\tilde{E}%
_{\gamma _{2},\gamma _{2}}\left( t-\tau ,x\right) g(u\left( \tau ,x\right)
)\,d\tau .  \label{ms2}
\end{eqnarray}
\end{definition}

The existence and uniqueness of a local solution of (\ref{sys1}) can be
established by using the Banach fixed point theorem and Gronwall's
inequality.

\begin{proposition}[Local existence of a mild solution]
\label{FD} Let $u_{0}, v_{0}, u_{1}, v_{1}\in \mathbb{X} $, $1<\gamma
_{1},\gamma _{2}<2$, $p$, $q\geq 1$ such that $pq>1$. Then there exist a
maximal time $T_{\max }>0$ and a unique mild solution to problem (\ref{sys1}%
)-(\ref{initdat}), such that either

\begin{itemize}
\item[(i)] $T_{\max }=\infty $ (the solution is global), or \newline

\item[(ii)] $T_{\max }<\infty $ and $\lim\limits_{t\rightarrow T_{\max
}}\left( \left\Vert u(t)\right\Vert _{\infty }+\left\Vert v(t)\right\Vert
_{\infty }\right) =\infty $ (the solution blows up in a finite time).
\end{itemize}

Moreover, for any $s_{1},s_{2}\in \left( 1,+\infty \right) ,$ $\left(
u,v\right) \in C\left( \left[ 0,T\right] ;L^{s_{1}}\left( \mathbb{R}%
^{N}\right) \times L^{s_{2}}\left( \mathbb{R}^{N}\right) \right) .$
\end{proposition}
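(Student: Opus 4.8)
The plan is to recast the integral system \eqref{ms1}--\eqref{ms2} as a fixed-point problem $(u,v)=\Phi(u,v)$ in the Banach space $E_T:=C([0,T];\mathbb{X})\times C([0,T];\mathbb{X})$ and to apply the Banach contraction principle for $T$ small, then extend the solution to a maximal interval. Here $\Phi=(\Phi_1,\Phi_2)$ with $\Phi_1(u,v)$, $\Phi_2(u,v)$ the right-hand sides of \eqref{ms1}, \eqref{ms2}; a fixed point of $\Phi$ is precisely a mild solution. Writing $\Vert\cdot\Vert_{\mathbb{X}}=\Vert\cdot\Vert_{L^1}+\Vert\cdot\Vert_{L^\infty}$ and fixing $M$ of the order of the data norm, I would work on the closed ball
\begin{equation*}
B_{M,T}=\Big\{(u,v)\in E_T:\ \sup_{0\le t\le T}\big(\Vert u(t)\Vert_{\mathbb{X}}+\Vert v(t)\Vert_{\mathbb{X}}\big)\le M\Big\},
\end{equation*}
which is a complete metric space for the induced distance.

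The estimates proceed as follows. Since $p,q\ge1$, the nonlinearities are locally Lipschitz on bounded sets of $\mathbb{X}$: from the pointwise inequality $\big||a|^{p-1}a-|b|^{p-1}b\big|\le C(|a|^{p-1}+|b|^{p-1})|a-b|$ (and the analogue for $|a|^p-|b|^p$), Hölder's inequality yields $\Vert f(v_1)-f(v_2)\Vert_{\mathbb{X}}\le C(\Vert v_1\Vert_{L^\infty}^{p-1}+\Vert v_2\Vert_{L^\infty}^{p-1})\Vert v_1-v_2\Vert_{\mathbb{X}}$, and in particular $\Vert f(v)\Vert_{\mathbb{X}}\le C\Vert v\Vert_{L^\infty}^{p-1}\Vert v\Vert_{\mathbb{X}}$, so $f$ (resp. $g$) sends bounded sets of $\mathbb{X}$ into $\mathbb{X}$. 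The linear parts are bounded by Lemma~\ref{Linfty}(i), applied with $p_1=1$ and $p_1=\infty$ and $\beta=1,2,\gamma_i$, giving $\Vert\tilde E_{\gamma_1,1}(t)u_0+t\tilde E_{\gamma_1,2}(t)u_1\Vert_{\mathbb{X}}\le C(\Vert u_0\Vert_{\mathbb{X}}+\Vert u_1\Vert_{\mathbb{X}})$. For the Duhamel term I use the same $L^{p_1}$--$L^{p_1}$ bound inside the time integral together with the integrability of the kernel (note $\gamma_1-1>0$):
\begin{align*}
\Big\Vert\int_0^t(t-\tau)^{\gamma_1-1}\tilde E_{\gamma_1,\gamma_1}(t-\tau)\ast f(v(\tau))\,d\tau\Big\Vert_{\mathbb{X}}
&\le C\int_0^t(t-\tau)^{\gamma_1-1}\Vert f(v(\tau))\Vert_{\mathbb{X}}\,d\tau\\
&\le \frac{C\,T^{\gamma_1}}{\gamma_1}\,\sup_{0\le\tau\le T}\Vert f(v(\tau))\Vert_{\mathbb{X}}.
\end{align*}
Combining these with the Lipschitz bound, the self-mapping and contraction constants on $B_{M,T}$ are of the form $C(M)\,(T^{\gamma_1}+T^{\gamma_2})$, which is $<1$ once $T$ is small. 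Strong continuity in $t$ (needed for $\Phi(u,v)\in E_T$) follows from dominated convergence and the continuity of the operator families. Banach's theorem then produces a unique local mild solution, and uniqueness is unconditional by the same Lipschitz estimate.

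Next I would define $T_{\max}=\sup\{T>0:\ \text{a mild solution exists on }[0,T]\}$. The main obstacle is the continuation step: because the fractional equation carries a memory term, there is no semigroup property and one cannot naively restart at an interior time. I would resolve this by splitting the history integral at the current endpoint $T_0$: on $[T_0,T_0+\delta]$ the part $\int_0^{T_0}$ is a known, continuous $\mathbb{X}$-valued function determined by the solution on $[0,T_0]$, while the remaining $\int_{T_0}^{t}$ is a self-contained contraction of size $\sim\delta^{\gamma_i}$; hence the argument above re-solves the problem on $[T_0,T_0+\delta]$ with $\delta$ depending only on an $\mathbb{X}$-norm bound. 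The blow-up alternative then follows: if $T_{\max}<\infty$ and $\limsup_{t\to T_{\max}}(\Vert u(t)\Vert_{\infty}+\Vert v(t)\Vert_{\infty})<\infty$, the $L^\infty$-bound feeds into the $L^1$-mild estimate $\Vert u(t)\Vert_{L^1}\le C+CK^{p-1}\int_0^t(t-\tau)^{\gamma_1-1}\Vert v(\tau)\Vert_{L^1}\,d\tau$ (and symmetrically), so a singular (fractional) Gronwall inequality bounds the full $\mathbb{X}$-norm on $[0,T_{\max})$, allowing continuation past $T_{\max}$ — a contradiction. Therefore case (i) or case (ii) must hold.

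Finally, the ``moreover'' statement is immediate by interpolation: since $u(t),v(t)\in\mathbb{X}=L^1\cap L^\infty$ with $\Vert f\Vert_{L^{s}}\le \Vert f\Vert_{L^1}^{1/s}\Vert f\Vert_{L^\infty}^{1-1/s}$, the map $\mathbb{X}\hookrightarrow L^{s}$ is continuous for every $s\in(1,\infty)$, so $(u,v)\in C([0,T];\mathbb{X})^2$ gives at once $(u,v)\in C([0,T];L^{s_1}(\mathbb{R}^N)\times L^{s_2}(\mathbb{R}^N))$ for all $s_1,s_2\in(1,\infty)$.
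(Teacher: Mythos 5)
Your proposal is correct and follows exactly the route the paper itself indicates but does not write out (the text only remarks that the result ``can be established by using the Banach fixed point theorem and Gronwall's inequality''): a contraction on $C([0,T];\mathbb{X})^{2}$ using the $L^{p_1}$--$L^{p_1}$ bounds of Lemma \ref{Linfty}, continuation by splitting the memory integral at the current endpoint, a Gronwall-based blow-up alternative, and interpolation of $L^{1}\cap L^{\infty}$ into $L^{s}$ for the final claim. The only minor imprecision is that your continuation argument yields $\limsup_{t\rightarrow T_{\max}}\left( \left\Vert u(t)\right\Vert _{\infty }+\left\Vert v(t)\right\Vert _{\infty }\right) =\infty$ rather than the stated $\lim$ (upgrading this is not automatic for equations with memory), a distinction the paper also glosses over.
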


Now, we are in a position to state the first main result of this section
concerning global existence and large time behavior of solutions of (\ref%
{sys1})-(\ref{initdat}).

\begin{theorem}[Global existence of a mild solution]
\label{GELT} Let $N\geq 2,$ $q\geq p\geq 1,$ $pq>1$, $1<\gamma _{1}\leq
\gamma _{2}<2.$ If 
\begin{equation}
\frac{N}{2}\geq \max \left\{ \frac{1}{\gamma _{1}}+\frac{q+1}{pq-1},\frac{1}{%
\gamma _{1}}+\frac{p\gamma _{2}+\gamma _{1}}{\gamma _{1}\left( pq-1\right) }%
\right\} ,\text{\ }  \label{critdimension}
\end{equation}%
the initial data satisfy 
\begin{equation*}
\left\Vert u_{0}\right\Vert _{\mathbb{X}}+\left\Vert u_{1}\right\Vert _{%
\mathbb{X}}+\left\Vert v_{0}\right\Vert _{\mathbb{X}}+\left\Vert
v_{1}\right\Vert _{\mathbb{X}}\leq \varepsilon _{0},
\end{equation*}%
for some $\varepsilon _{0}>0$, then problem (\ref{sys1})-(\ref{initdat})
admits a global mild solution and that 
\begin{eqnarray*}
u &\in &L^{\infty }\left( \left[ 0,\infty \right) ,L^{\infty }\left( \mathbb{%
R}^{N}\right) \right) \cap L^{\infty }\left( \left[ 0,\infty \right)
,L^{s_{1}}\left( \mathbb{R}^{N}\right) \right) \text{,}\bigskip \\
v &\in &L^{\infty }\left( \left[ 0,\infty \right) ,L^{\infty }\left( \mathbb{%
R}^{N}\right) \right) \cap L^{\infty }\left( \left[ 0,\infty \right)
,L^{s_{2}}\left( \mathbb{R}^{N}\right) \right) \text{,}
\end{eqnarray*}%
where $s_{1}>q$ and $s_{2}>p.$ \newline

Furthermore, for any $\delta $ satisfying $1-\frac{1+q}{(p+1)q\gamma _{2}}%
<\delta <\min \left\{ 1,\frac{N\left( pq-1\right) }{2q(p+1)}\right\} ,$ 
\begin{equation*}
\left\Vert u\left( t\right) \right\Vert _{s_{1}}\leq C\left( t+1\right) ^{-%
\frac{\left( 1-\delta \right) \left( \gamma _{1}+p\gamma _{2}\right) }{pq-1}%
},\;t\geq 0,
\end{equation*}%
\begin{equation*}
\text{ }\left\Vert v\left( t\right) \right\Vert _{s_{2}}\leq C\left(
t+1\right) ^{-\frac{\left( 1-\delta \right) \left( \gamma _{2}+q\gamma
_{1}\right) }{pq-1}},\;t\geq 0.
\end{equation*}%
If, in addition, 
\begin{equation*}
\frac{pN}{2s_{2}}<1\;\text{ and}\;\;\;\frac{qN}{2s_{1}}<1,
\end{equation*}%
or 
\begin{equation*}
N>2,\;pN/(2s_{2})<1\;\text{ and}\;\;qN/(2s_{1})\geq 1,
\end{equation*}%
or 
\begin{equation*}
N>2,\;qN/(2s_{1})\geq 1,\;pN/(2s_{2})\geq 1\;\text{ and}\;\;q\geq p>1\;\text{
with}\;\;\sqrt{\frac{\left( p+1\right) q\gamma _{1}}{\left( q+1\right) p}}%
<\gamma _{1}\leq \gamma _{2}<2,
\end{equation*}%
then, 
\begin{eqnarray*}
u,v &\in &L^{\infty }\left( \left[ 0,\infty \right) ,L^{\infty }\left( 
\mathbb{R}^{N}\right) \right) , \\
\left\Vert u\left( t\right) \right\Vert _{\infty } &\leq &C\left( t+1\right)
^{-\tilde{\sigma}},\text{ }\left\Vert v\left( t\right) \right\Vert _{\infty
}\leq C\left( t+1\right) ^{-\hat{\sigma}},\;\,\text{for all}\;t\geq 0,
\end{eqnarray*}%
for some positive constants $\tilde{\sigma}$ and $\hat{\sigma}$.\bigskip\ 
\end{theorem}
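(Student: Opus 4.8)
The plan is to realize the global solution as a fixed point of the solution map defined by the mild formulation \eqref{ms1}--\eqref{ms2}, working in a Banach space whose norm already encodes the decay rates claimed in the statement, and then to bootstrap those $L^{s_1}$--$L^{s_2}$ estimates up to $L^{\infty}$. Writing $a_{1}=\frac{(1-\delta)(\gamma_{1}+p\gamma_{2})}{pq-1}$ and $a_{2}=\frac{(1-\delta)(\gamma_{2}+q\gamma_{1})}{pq-1}$, I would introduce
\[
\|(u,v)\|_{E}=\sup_{t\ge0}(1+t)^{a_{1}}\|u(t)\|_{s_{1}}+\sup_{t\ge0}(1+t)^{a_{2}}\|v(t)\|_{s_{2}},
\]
and let $\Phi=(\Phi_{1},\Phi_{2})$ be the right-hand sides of \eqref{ms1}--\eqref{ms2}. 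The goal is to show that, for $\varepsilon_{0}$ small, $\Phi$ maps the closed ball $\{\|(u,v)\|_{E}\le M\}$ into itself and is a contraction there; Proposition~\ref{FD} then identifies this fixed point with the maximal mild solution, which is therefore global.

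For the linear part I would split into short and long times. On $0\le t\le1$ the $L^{s_{i}}$--$L^{s_{i}}$ bound of Lemma~\ref{Linfty}(i) (applicable since $u_{0},v_{0},u_{1},v_{1}\in\mathbb{X}\subset L^{s_{i}}$) controls $\|\tilde{E}_{\gamma_{1},1}(t)u_{0}\|_{s_{1}}$ and the other terms by a constant, while for $t\ge1$ I would use \eqref{item-i}--\eqref{item-ii} of Lemma~\ref{galpha} with $p_{1}$ chosen between $1$ and $s_{1}$ so that $\lambda=\frac{N}{p_{1}}-\frac{N}{s_{1}}<2$ yet the resulting decay $\tfrac{\gamma_{1}}{2}\lambda$ is at least $a_{1}$; Lemma~\ref{poldec} then packages these into a single $(1+t)^{-a_{1}}$ bound. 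It is exactly here that the dimensional hypothesis \eqref{critdimension} enters: it guarantees that the available linear decay from $\mathbb{X}$ data is fast enough to dominate $a_{1}$ (resp.\ $a_{2}$), and simultaneously that the admissible window for $\delta$ is nonempty.

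The heart of the matter is the Duhamel term. For $\Phi_{1}$ I would estimate, using \eqref{item-iv} with $p_{2}=s_{1}$ and $p_{1}=s_{2}/p$ (so that $\|f(v(\tau))\|_{p_{1}}\le\|v(\tau)\|_{s_{2}}^{p}$ and $\lambda=\tfrac{Np}{s_{2}}-\tfrac{N}{s_{1}}$),
\[
\Big\|\int_{0}^{t}(t-\tau)^{\gamma_{1}-1}\tilde{E}_{\gamma_{1},\gamma_{1}}(t-\tau)\ast f(v(\tau))\,d\tau\Big\|_{s_{1}}\le C\int_{0}^{t}(t-\tau)^{\gamma_{1}-1-\frac{\gamma_{1}}{2}\lambda}(1+\tau)^{-pa_{2}}\,d\tau,
\]
and symmetrically for $\Phi_{2}$ with $p,q,\gamma_{1},\gamma_{2},s_{1},s_{2}$ interchanged. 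Splitting $[0,t]$ at $t/2$ and using the standard beta-type estimate for such a convolution reduces everything to two algebraic requirements: local integrability of the kernel, i.e.\ $\gamma_{1}-1-\tfrac{\gamma_{1}}{2}\lambda>-1$ (equivalently $\lambda<2$, as in Lemma~\ref{galpha}), and that the produced decay be at least the imposed one. Choosing $s_{1},s_{2}$ with $s_{1}/s_{2}=(q+1)/(p+1)$ forces $\lambda=2\delta$ in both equations, and the matching conditions then read precisely $pa_{2}-a_{1}=(1-\delta)\gamma_{1}$ and $qa_{1}-a_{2}=(1-\delta)\gamma_{2}$, which are solved identically by the stated $a_{1},a_{2}$. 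The constraints $s_{1}>q$, $s_{2}>p$, together with the integrability and positivity requirements, translate into the displayed range $1-\frac{1+q}{(p+1)q\gamma_{2}}<\delta<\min\{1,\frac{N(pq-1)}{2q(p+1)}\}$, and I would check that this interval is nonempty under \eqref{critdimension}. The contraction estimate follows the same computation after the elementary inequality $\big||a|^{p}-|b|^{p}\big|\le p\,(|a|^{p-1}+|b|^{p-1})|a-b|$ and Hölder's inequality.

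The main obstacle is precisely this exponent bookkeeping: one must keep the two coupled decay rates consistent across both equations while respecting the integrability window of \eqref{item-iv} and the lower and upper bounds on $\delta$, and verify that \eqref{critdimension} makes the whole system of inequalities feasible. The final $L^{\infty}$ statement I would obtain by a bootstrap: inserting the now-established $L^{s_{1}},L^{s_{2}}$ decay into \eqref{ms1}--\eqref{ms2} and estimating the Duhamel term by the $L^{p_{1}}$--$L^{\infty}$ bound of Lemma~\ref{Linfty}(ii) (valid for $p_{1}>N/2$) together with the $L^{1}$--$L^{\infty}$ linear decay. The three cases in the statement correspond to whether $qN/(2s_{1})$ and $pN/(2s_{2})$ lie below or above $1$, i.e.\ to whether the convolution kernel is directly $L^{\infty}$-smoothing or an intermediate bootstrap step is needed; the borderline third case, where both exceed $1$, is the delicate one and is exactly what forces the extra restriction $\sqrt{(p+1)q\gamma_{1}/((q+1)p)}<\gamma_{1}$.
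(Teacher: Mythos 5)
Your architecture (decay-encoded norms built from the smoothing estimates of Lemmas~\ref{galpha} and~\ref{Linfty}, the exponent matching $pa_{2}-a_{1}=(1-\delta)\gamma_{1}$, $qa_{1}-a_{2}=(1-\delta)\gamma_{2}$, and a bootstrap to $L^{\infty}$) is essentially the paper's, repackaged as a Banach fixed point instead of the paper's a priori bound $f(t)\leq C(A+f(t)^{pq})$ closed by a continuity argument; that repackaging is legitimate. However, there is a genuine gap in your treatment of the linear term $t\tilde{E}_{\gamma_{1},2}(t)u_{1}$. You propose to control it with \eqref{item-i}--\eqref{item-ii}, but \eqref{item-ii} yields at best the decay $t^{1-\frac{\gamma_{1}}{2}\lambda}$ with $\lambda<2$, i.e.\ no better than $t^{-(\gamma_{1}-1)}$. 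The rate you must reach is $a_{1}=\frac{(1-\delta)(\gamma_{1}+p\gamma_{2})}{pq-1}$, which in general exceeds $\gamma_{1}-1$ (take $\gamma_{1}$ close to $1$); so the self-map property of your ball fails already on the linear part. The paper avoids this by invoking \eqref{item-iii}, measuring $u_{1}$ in the negative-order space $\mathcal{\dot{H}}_{r_{1}}^{-2/\gamma_{1}}$, which delivers $t^{-\frac{\gamma_{1}}{2}\lambda}$ without the loss of one power of $t$; your argument needs this ingredient and does not have it.

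The second gap is in the $L^{\infty}$ step. Your bootstrap via Lemma~\ref{Linfty}(ii) works only when the exponent feeding the Duhamel term exceeds $N/2$ after division by the nonlinearity power, i.e.\ when $pN/(2s_{2})<1$ or $qN/(2s_{1})<1$. In the second and third cases of the statement this fails by hypothesis, and saying the third case ``is the delicate one'' does not supply an argument. The paper's Step~2 constructs recursive sequences $\{s_{i}^{\prime}\},\{s_{i}^{\prime\prime}\}$ via $\frac{1}{s_{i}^{\prime}}=\frac{p}{s_{i-1}^{\prime\prime}}-\frac{2}{N}+\eta$ and shows, using $pq>1$, that after finitely many iterations one reaches an index $i_{0}$ with $p/s_{i_{0}}^{\prime\prime}<2/N$, at which point Lemma~\ref{Linfty}(ii) applies; the third case additionally requires propagating decay (not just boundedness) through this iteration, which is where the restriction $\sqrt{(p+1)q\gamma_{1}/((q+1)p)}<\gamma_{1}$ and the auxiliary parameters $\varepsilon,\beta$ enter. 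Without this iteration your proof covers only the first of the three cases.
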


\begin{definition}[Weak solution]
\label{Weaks} Let $u_{0},v_{0}\in L_{loc}^{\infty }\left( \mathbb{R}%
^{N}\right) ,u_{1},v_{1}\in L_{loc}^{\infty }\left( \mathbb{R}^{N}\right)
,T>0$. We say that $\left( u,v\right) \in L^{q}\left( (0,T),L_{loc}^{\infty
}\left( \mathbb{R}^{N}\right) \right) \times L^{p}\left(
(0,T),L_{loc}^{\infty }\left( \mathbb{R}^{N}\right) \right) $ is a weak
solution of (\ref{sys1}) if 
\begin{equation*}
\begin{array}{l}
\displaystyle\int_{0}^{T}\displaystyle\int_{\mathbb{R}^{N}}uD_{t|T}^{\gamma
_{1}}\varphi \left( t,x\right) dxdt-\displaystyle\int_{0}^{T}\displaystyle%
\int_{\mathbb{R}^{N}}u\Delta \varphi \left( t,x\right) dxdt=\displaystyle%
\int_{\mathbb{R}^{N}}u_{0}\left( x\right) \left( D_{t|T}^{\gamma
_{1}-1}\varphi \right) \left( 0,.\right) dx \\ 
+\displaystyle\int_{0}^{T}\displaystyle\int_{\mathbb{R}^{N}}u_{1}D_{t|T}^{%
\gamma _{1}-1}\varphi \left( t,x\right) dxdt+\displaystyle\int_{0}^{T}%
\displaystyle\int_{\mathbb{R}^{N}}f(v\left( \tau ,x\right) )\varphi \left(
t,x\right) dxdt\text{,}\bigskip \\ 
\displaystyle\int_{0}^{T}\displaystyle\int_{\mathbb{R}^{N}}vD_{t|T}^{\gamma
_{2}}\varphi \left( t,x\right) dxdt-\displaystyle\int_{0}^{T}\displaystyle%
\int_{\mathbb{R}^{N}}v\Delta \varphi \left( t,x\right) dxdt=\displaystyle%
\int_{\mathbb{R}^{N}}v_{0}\left( x\right) \left( D_{t|T}^{\gamma
_{2}-1}\varphi \right) \left( 0,.\right) dx\bigskip \\ 
+\displaystyle\int_{0}^{T}\displaystyle\int_{\mathbb{R}^{N}}v_{1}D_{t|T}^{%
\gamma _{2}-1}\varphi \left( t,x\right) dxdt+\displaystyle\int_{0}^{T}%
\displaystyle\int_{\mathbb{R}^{N}}g(u\left( \tau ,x\right) )\varphi \left(
t,x\right) dxdt\text{.}%
\end{array}%
\end{equation*}%
for every function $\varphi \in C_{t,x}^{1,2}\left( [0,T]\times \mathbb{R}%
^{N}\right) $ such that $\varphi \left( T,.\right) =0$.
\end{definition}

Similar to the proof in \cite{KirFin}, we can obtain the following lemma
asserting that the mild solution is the weak solution.

\begin{lemma}
Assume that $\left( u_{0},v_{0}\right) ,\left( u_{1},v_{1}\right) \in 
\mathcal{S}\left( \mathbb{R}^{N}\right) \times \mathcal{S}\left( \mathbb{R}%
^{N}\right) $and let $\left( u,v\right) \in C^{\gamma _{1}}\left( [0,T],%
\mathcal{S}\left( \mathbb{R}^{N}\right) \right) \times C^{\gamma _{2}}\left(
[0,T],\mathcal{S}\left( \mathbb{R}^{N}\right) \right) $ be a mild solution
of (\ref{sys1})-(\ref{initdat}). Then $\left( u,v\right) $ is also a weak
solution of (\ref{sys1})-(\ref{initdat}).
\end{lemma}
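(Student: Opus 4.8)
The plan is to exploit the fact that, for Schwartz initial data and smooth source terms, the mild solution furnished by (\ref{ms1})--(\ref{ms2}) is in fact a \emph{classical} solution. Since $(u,v)\in C^{\gamma_1}([0,T],\mathcal{S}(\mathbb{R}^N))\times C^{\gamma_2}([0,T],\mathcal{S}(\mathbb{R}^N))$, the Mittag-Leffler representation guarantees that $u$ and $v$ satisfy
\begin{equation*}
{}^{C}D_{0|t}^{\gamma_1}u-\Delta u=f(v),\qquad {}^{C}D_{0|t}^{\gamma_2}v-\Delta v=g(u)
\end{equation*}
pointwise, together with the initial conditions $u(0)=u_0,\ u_t(0)=u_1,\ v(0)=v_0,\ v_t(0)=v_1$: indeed $\tilde{E}_{\gamma_1,1}(t)u_0$ solves the homogeneous problem with data $(u_0,0)$, $t\tilde{E}_{\gamma_1,2}(t)u_1$ solves it with data $(0,u_1)$, and the Duhamel convolution term accounts for the source $f(v)$, which is precisely the linear theory recalled before Lemma \ref{galpha}. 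It therefore suffices to multiply each equation by a test function $\varphi\in C^{1,2}_{t,x}([0,T]\times\mathbb{R}^N)$ with $\varphi(T,\cdot)=0$, integrate over $(0,T)\times\mathbb{R}^N$, and transfer the derivatives onto $\varphi$.

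For the spatial term I would integrate by parts twice in $x$; since $u(t,\cdot)\in\mathcal{S}(\mathbb{R}^N)$ decays rapidly and $\varphi$ is smooth, no boundary contributions arise and
\begin{equation*}
\int_0^T\!\!\int_{\mathbb{R}^N}\Delta u\,\varphi\,dx\,dt=\int_0^T\!\!\int_{\mathbb{R}^N}u\,\Delta\varphi\,dx\,dt.
\end{equation*}
Fubini's theorem, justified by the $L^1$-in-time, Schwartz-in-space integrability of all quantities involved, then lets me isolate the time variable for each fixed $x$.

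The crux is the fractional integration by parts in time, which must transfer the left Caputo derivative ${}^{C}D_{0|t}^{\gamma_1}$ off $u$ and onto $\varphi$ as the right Riemann-Liouville derivative $D_{t|T}^{\gamma_1}\varphi$. Because $1<\gamma_1<2$, I would first pass to the Riemann-Liouville setting via the relation ${}^{C}D_{0|t}^{\gamma_1}u=D_{0|t}^{\gamma_1}(u-u_0-t\,u_1)$ recalled in the Preliminaries. Writing $w=u-u_0-t\,u_1$, so that $w(0)=w'(0)=0$ and ${}^{C}D_{0|t}^{\gamma_1}u=\frac{d^2}{dt^2}J_{0|t}^{2-\gamma_1}w$, I integrate by parts twice in the ordinary sense. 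The boundary terms at $t=0$ vanish because $w(0)=w'(0)=0$ forces $J_{0|t}^{2-\gamma_1}w$ and its first derivative to vanish there, while those at $t=T$ vanish since $\varphi(T,\cdot)=0$ makes $J_{t|T}^{2-\gamma_1}\varphi$ and $D_{t|T}^{\gamma_1-1}\varphi$ vanish at $T$ (as one reads off directly from (\ref{eq:3})). Swapping the left fractional integral $J_{0|t}^{2-\gamma_1}$ for its adjoint right fractional integral $J_{t|T}^{2-\gamma_1}$ then yields $\int_0^T w\,D_{t|T}^{\gamma_1}\varphi\,dt$. Re-inserting $w=u-u_0-t\,u_1$ and integrating the two explicit pieces $-u_0$ and $-t\,u_1$ against $D_{t|T}^{\gamma_1}\varphi$ produces exactly the terms $\int_{\mathbb{R}^N}u_0(D_{t|T}^{\gamma_1-1}\varphi)(0,\cdot)\,dx$ and $\int_0^T\!\int_{\mathbb{R}^N}u_1 D_{t|T}^{\gamma_1-1}\varphi\,dx\,dt$ on the right-hand side of Definition \ref{Weaks}.

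Collecting the three contributions reproduces the first identity of Definition \ref{Weaks}, and the second follows verbatim with $(\gamma_2,g,v)$ in place of $(\gamma_1,f,u)$. The main obstacle is the careful bookkeeping of the boundary terms generated by the order-$(1,2)$ fractional integration by parts: one must verify that every terminal contribution at $t=T$ is annihilated by $\varphi(T,\cdot)=0$ and by the vanishing at $T$ of the associated right fractional integrals and derivatives, and that the two initial contributions combine with precisely the $\Gamma$-function normalisations that make $D_{t|T}^{\gamma_1-1}\varphi$ (rather than some other fractional order) appear. This is the step where the detailed Caputo-versus-Riemann-Liouville boundary identities, as exploited in \cite{KirFin}, must be invoked.
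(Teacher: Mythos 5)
Your argument is correct in outline, but it follows a genuinely different route from the paper's. The paper never asserts that the mild solution is a classical solution of the nonlinear problem; instead it works entirely inside the Mittag--Leffler representation: it differentiates \eqref{ms1} in $t$, applies $J_{0|t}^{2-\gamma_{1}}$ to $u_{t}-u_{1}$, evaluates $J_{0|t}^{2-\gamma_{1}}$ of the Duhamel term by an explicit Fourier-side series computation with the Beta function (showing it equals $\int_{0}^{t}\tilde{E}_{\gamma_{1},1}(t-s)\ast f(v(s,\cdot))\,ds$), and then computes $\partial_{t}\int_{\mathbb{R}^{N}}J_{0|t}^{2-\gamma_{1}}(u_{t}-u_{1})\varphi\,dx$ in two ways, using the operator identities $D_{0|t}^{\gamma_{1}}\tilde{E}_{\gamma_{1},1}(t)u_{0}=\Delta\tilde{E}_{\gamma_{1},1}(t)u_{0}$ and its analogue for $t\tilde{E}_{\gamma_{1},2}(t)u_{1}$; equating the two expressions and integrating over $[0,T]$ produces the weak identity. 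You instead upgrade the mild solution to a pointwise solution by invoking the linear theory quoted before Lemma \ref{galpha}, and then perform a direct fractional integration by parts of order $\gamma_{1}\in(1,2)$ via the substitution $w=u-u_{0}-tu_{1}$. Your route is more modular and makes transparent why the data terms $\int u_{0}(D_{t|T}^{\gamma_{1}-1}\varphi)(0,\cdot)$ and $\int u_{1}D_{t|T}^{\gamma_{1}-1}\varphi$ appear with the right normalisations; the paper's route is more computational but self-contained, since it only uses verifiable properties of the kernels rather than classical solvability of the nonlinear equation. Two points deserve explicit verification in your write-up: first, the paper's Preliminaries state the integration-by-parts formula and the Caputo--Riemann--Liouville relation only for order in $(0,1)$, so the order-$(1,2)$ versions you use (in particular ${}^{C}D_{0|t}^{\gamma_{1}}u=D_{0|t}^{\gamma_{1}}(u-u_{0}-tu_{1})$) must be stated and justified; second, since Definition \ref{Weaks} imposes only $\varphi(T,\cdot)=0$ and not $\varphi_{t}(T,\cdot)=0$, the second ordinary integration by parts leaves a terminal term proportional to $\varphi_{t}(T)J_{0|T}^{2-\gamma_{1}}w(T)$, which cancels only against the discrepancy between $J_{t|T}^{2-\gamma_{1}}\varphi_{tt}$ (the right Caputo derivative) and $D_{t|T}^{\gamma_{1}}\varphi$ (the right Riemann--Liouville derivative) evaluated through \eqref{eq:3}; this cancellation does occur, but it is exactly the bookkeeping you flag as delicate and should be carried out explicitly.
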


\begin{proof}
As $\left( u,v\right)$ is a mild solution, we have 
\begin{equation*}
u(t,x)=\tilde{E}_{\gamma _{1},1}(t,x)u_{0}(x)+t\tilde{E}_{\gamma
_{1},2}(t,x)u_{1}(x)+\displaystyle\int_{0}^{t}\left( t-s\right) ^{\gamma
_{1}-1}\tilde{E}_{\gamma _{1},\gamma _{1}}\left( t-s\right) \ast f\left(
v(s,x)\right) ds.
\end{equation*}
Differentiating with respect to $t$ and noting that $1<\gamma _{1}<2$, we
get 
\begin{eqnarray}
u_{t}(t,x)-u_{1}(x) &=&\partial _{t}\tilde{E}_{\gamma
_{1},1}(t,x)u_{0}(x)+\partial _{t}\left( t\tilde{E}_{\gamma
_{1},2}(t,x)\right) u_{1}(x)-u_{1}(x)  \notag \\
&&+\displaystyle\int_{0}^{t}\left( t-s\right) ^{\gamma _{1}-2}\tilde{E}%
_{\gamma _{1},\gamma _{1}-1}\left( t-s\right) \ast f\left( v(s,x\right) )ds%
\text{,}  \label{deriv}
\end{eqnarray}
where we have used the following formula 
\begin{equation*}
\left( \frac{d}{dz}\right) ^{(m)}\left[ z^{\beta -1}E_{\alpha ,\beta }\left(
z^{\alpha }\right) \right] =z^{\beta -m-1}E_{\alpha ,\beta -m}\left(
z^{\alpha }\right) ,\text{ }\Re \left( \beta -m\right) >0,\text{ }m=0,1,...
\end{equation*}%
Applying $J_{0|t}^{2-\gamma _{1}}$ to both sides of (\ref{deriv}), we obtain 
\begin{equation*}
\begin{array}{c}
J_{0|t}^{2-\gamma _{1}}\left( u_{t}-u_{1}\right) =J_{0|t}^{2-\gamma
_{1}}\left( \partial _{t}\tilde{E}_{\gamma _{1},1}(t,x)\right)
u_{0}(x)+J_{0|t}^{2-\gamma _{1}}\left( \partial _{t}\left( t\tilde{E}%
_{\gamma _{1},2}(t,.)\right) u_{1}(x)-u_{1}(x)\right) \\ 
+J_{0|t}^{2-\gamma _{1}}\left( \displaystyle\int_{0}^{t}\left( t-s\right)
^{\gamma _{1}-2}\tilde{E}_{\gamma _{1},\gamma _{1}-1}\left( t-s,.\right)
\ast f\left( v(s,x\right) )\right) ds.%
\end{array}%
\end{equation*}
On the other hand, we have 
\begin{equation}
\begin{array}{l}
J_{0|t}^{2-\gamma _{1}}\left( \displaystyle\int_{0}^{t}\left( t-s\right)
^{\gamma _{1}-2}E_{\gamma _{1},\gamma _{1}-1}\left( -\left\vert \xi
\right\vert ^{2}\left( s-\tau \right) ^{\gamma _{1}}\right) \hat{f}\left(
s,\xi \right) ds\right) \\ 
=\frac{1}{\Gamma \left( 2-\gamma _{1}\right) }\displaystyle%
\int_{0}^{t}\left( t-s\right) ^{1-\gamma _{1}}\displaystyle%
\int_{0}^{s}\left( s-\tau \right) ^{\gamma _{1}-2}E_{\gamma _{1},\gamma
_{1}-1}\left( -\left\vert \xi \right\vert ^{2}\left( s-\tau \right) ^{\gamma
_{1}}\right) \hat{f}\left( \tau ,\xi \right) d\tau ds \\ 
=\sum\limits_{k=0}^{+\infty }\frac{\left( -1\right) ^{k}\left\vert \xi
\right\vert ^{2k}}{\Gamma \left( 2-\gamma _{1}\right) \Gamma \left( \gamma
_{1}k+\gamma _{1}-1\right) }\displaystyle\displaystyle\int_{0}^{t}\left(
t-s\right) ^{1-\gamma _{1}}\displaystyle\int_{0}^{s}\left( s-\tau \right)
^{\gamma _{1}-2+\gamma _{1}k}\hat{f}\left( \tau ,\xi \right) d\tau ds \\ 
=\sum\limits_{k=0}^{+\infty }\frac{\left( -1\right) ^{k}\left\vert \xi
\right\vert ^{2k}}{\Gamma \left( 2-\gamma _{1}\right) \Gamma \left( \gamma
_{1}k+\gamma _{1}-1\right) }\displaystyle\int_{0}^{t}\displaystyle\int_{\tau
}^{t}\left( t-s\right) ^{1-\gamma _{1}}\left( s-\tau \right) ^{\gamma
_{1}-2+\gamma _{1}k}ds\hat{f}\left( \tau ,\xi \right) d\tau \\ 
=\sum\limits_{k=0}^{+\infty }\frac{\left( -1\right) ^{k}\left\vert \xi
\right\vert ^{2k}}{\Gamma \left( 2-\gamma _{1}\right) \Gamma \left( \gamma
_{1}k+\gamma _{1}-1\right) }\mathbf{B}\left( 2-\gamma _{1},\gamma
_{1}k+\gamma _{1}-1\right) \displaystyle\int_{0}^{t}\left( t-s\right)
^{\gamma _{1}k}\hat{f}\left( s,\xi \right) ds \\ 
=\displaystyle\int_{0}^{t}E_{\gamma _{1},1}\left( -\left\vert \xi
\right\vert ^{2}\left( s-\tau \right) ^{\gamma _{1}}\right) \hat{f}\left(
s,\xi \right) \, ds.%
\end{array}
\label{ftmlf}
\end{equation}
Here $\mathbf{B}$ denotes to the beta function.\newline

Applying the Fourier inverse transform to both sides of (\ref{ftmlf}) yields 
\begin{equation*}
J_{0|t}^{2-\gamma _{1}}\left( \displaystyle\int_{0}^{t}\left( t-s\right)
^{\gamma _{1}-2}\tilde{E}_{\gamma _{1},\gamma _{1}-1}\left( t-s,.\right)
\ast f\left( v(s,x\right) )\right) ds
\end{equation*}
\begin{equation*}
=\displaystyle\int_{0}^{t}\tilde{E}_{\gamma _{1},1}\left( t-s,.\right) \ast
f\left( v(s,x\right) )ds\text{.}
\end{equation*}

Then, for every test function $\varphi \in C_{x,t}^{2,1}\left( \mathbb{R}%
^{N}\times \left[ 0,T\right] \right) ,$ supp$\varphi \subset \subset \mathbb{%
R}^{N}\times \left[ 0,T\right] $ and $\varphi \left( T,x\right) =0$, we have 
\begin{equation*}
\begin{array}{l}
\displaystyle\int_{\mathbb{R}^{N}}J_{0|t}^{2-\gamma _{1}}\left(
u_{t}-u_{1}\right) \varphi dx=\displaystyle\int_{\mathbb{R}%
^{N}}J_{0|t}^{2-\gamma _{1}}\left( \partial _{t}\tilde{E}_{\gamma
_{1},1}(t,x)\right) u_{0}(x)\varphi \,dx \\ 
\quad \quad \quad \quad \quad \quad +\displaystyle\int_{\mathbb{R}%
^{N}}J_{0|t}^{2-\gamma _{1}}\left( \partial _{t}\left( t\tilde{E}_{\gamma
_{1},2}(t,.)\right) u_{1}(x)-u_{1}(x)\right) \varphi \,dx \\ 
\quad \quad \quad \quad \quad \quad +\displaystyle\int_{\mathbb{R}^{N}}%
\displaystyle\int_{0}^{t}\tilde{E}_{\gamma _{1},1}\left( t-s\right) \ast
f\left( v(s,x)\right) ds\varphi \,dx.%
\end{array}%
\end{equation*}%
Setting%
\begin{equation*}
I:=\displaystyle\int_{\mathbb{R}^{N}}J_{0|t}^{2-\gamma _{1}}\left(
u_{t}-u_{1}\right) \varphi \,dx\text{,}
\end{equation*}%
we get 
\begin{equation*}
\begin{array}{l}
\;\;\frac{\partial }{\partial t}I_{{}}=\displaystyle\int_{\mathbb{R}^{N}}%
\frac{\partial }{\partial t}\left[ J_{0|t}^{2-\gamma _{1}}\left(
u_{t}-u_{1}\right) \varphi \right] dx \\ 
\quad \quad =\displaystyle\int_{\mathbb{R}^{N}}\frac{\partial }{\partial t}%
\left[ J_{0|t}^{2-\gamma _{1}}\left( \partial _{t}\tilde{E}_{\gamma
_{1},1}(t,x)\right) u_{0}(x)\varphi \right] dx \\ 
\quad \quad +\displaystyle\int_{\mathbb{R}^{N}}\frac{\partial }{\partial t}%
\left[ J_{0|t}^{2-\gamma _{1}}\left( \partial _{t}\left( t\tilde{E}_{\gamma
_{1},2}(t,.)\right) u_{1}(x)-u_{1}(x)\right) \varphi \right] dx \\ 
\quad \quad +\displaystyle\int_{\mathbb{R}^{N}}\frac{\partial }{\partial t}%
\left( \displaystyle\int_{0}^{t}\tilde{E}_{\gamma _{1},1}\left( t-s\right)
d\tau \ast f\left( s,x\right) ds\varphi \right) dx.%
\end{array}%
\end{equation*}%
On the other hand, using the relations 
\begin{equation*}
D_{0|t}^{\gamma _{1}}\tilde{E}_{\gamma _{1},1}\left( t,.\right) u_{0}\left(
x\right) =\Delta \tilde{E}_{\gamma _{1},1}\left( t,.\right) u_{0}\left(
x\right) ,
\end{equation*}%
\begin{equation*}
D_{0|t}^{\gamma _{1}}\left( t\tilde{E}_{\gamma _{1},2}(t,.)\right)
u_{1}(x)=\Delta \left( t\tilde{E}_{\gamma _{1},2}(t,.)\right) u_{1}(x)\text{,%
}
\end{equation*}%
we obtain 
\begin{eqnarray*}
&&\int_{\mathbb{R}^{N}}\frac{\partial }{\partial t}\left[ J_{0|t}^{2-\gamma
_{1}}\left( \partial _{t}\left( t\tilde{E}_{\gamma _{1},2}(t,.)\right)
u_{1}(x)-u_{1}(x)\right) \varphi \right] dx \\
&=&\int_{\mathbb{R}^{N}}D_{0|t}^{\gamma _{1}}\left( t\tilde{E}_{\gamma
_{1},2}(t,.)\right) u_{1}(x)\varphi \left( t,x\right) dx \\
&&+\int_{\mathbb{R}^{N}}J_{0|t}^{2-\gamma _{1}}\left( \partial _{t}\left( t%
\tilde{E}_{\gamma _{1},2}(t,.)\right) u_{1}(x)-u_{1}(x)\right) \varphi
_{t}\left( t,x\right) dx \\
&=&\int_{\mathbb{R}^{N}}t\tilde{E}_{\gamma _{1},2}(t,.)u_{1}(x)\Delta
\varphi \left( t,x\right) dx \\
&&+\int_{\mathbb{R}^{N}}J_{0|t}^{2-\gamma _{1}}\left( \partial _{t}\left( t%
\tilde{E}_{\gamma _{1},2}(t,.)\right) u_{1}(x)-u_{1}(x)\right) \varphi
_{t}\left( t,x\right) dx\text{,}
\end{eqnarray*}%
and%
\begin{equation*}
\int_{\mathbb{R}^{N}}\frac{\partial }{\partial t}\left[ J_{0|t}^{2-\gamma
_{1}}\left( \partial _{t}\tilde{E}_{\gamma _{1},1}(t,x)\right)
u_{0}(x)\varphi \right] dx=\int_{\mathbb{R}^{N}}\tilde{E}_{\gamma
_{1},1}(t,x)u_{0}(x)\Delta \varphi \left( t,x\right) dx
\end{equation*}%
\begin{equation*}
\quad \quad \quad \quad \quad \quad \quad \quad \quad \quad +\int_{\mathbb{R}%
^{N}}J_{0|t}^{2-\gamma _{1}}\left( \partial _{t}\tilde{E}_{\gamma
_{1},1}(t,x)\right) u_{0}(x)\varphi _{t}\left( t,x\right) dx\text{.}
\end{equation*}%
Using the Leibniz formula, we get 
\begin{equation*}
\frac{\partial }{\partial t}\displaystyle\int_{0}^{t}\tilde{E}_{\gamma
_{1},1}\left( t-s\right) \ast f\left( v(s,x)\right) ds
\end{equation*}%
\begin{equation*}
\quad \quad \quad \quad \quad \quad \quad \quad =\tilde{E}_{\gamma
_{1},1}\left( 0\right) f\left( v(t,x)\right) +\displaystyle%
\int_{0}^{t}\partial _{t}\tilde{E}_{\gamma _{1},1}\left( t-s\right) \ast
f\left( v(s,x)\right) \,ds
\end{equation*}%
\begin{equation*}
\quad \quad \quad \quad =f\left( v(t,x\right) )+\displaystyle%
\int_{0}^{t}\partial _{t}\tilde{E}_{\gamma _{1},1}\left( t-s\right) \ast
f\left( v(t,x)\right) \,ds.
\end{equation*}%
So, 
\begin{equation*}
\begin{array}{l}
\frac{\partial }{\partial t}I=\displaystyle\int_{\mathbb{R}^{N}}\tilde{E}%
_{\gamma _{1},1}(t,x)u_{0}(x)\Delta \varphi dx+\displaystyle\int_{\mathbb{R}%
^{N}}t\tilde{E}_{\gamma _{1},2}(t,.)u_{1}(x)\Delta \varphi \,dx \\ 
+\displaystyle\int_{\mathbb{R}^{N}}f\left( v(t,x\right) )\varphi \,dx+%
\displaystyle\int_{\mathbb{R}^{N}}\displaystyle\int_{0}^{t}\left( t-s\right)
^{\gamma _{1}-1}\tilde{E}_{\gamma _{1},\gamma _{1}}\left( t-s\right) \ast
f\left( v(s,x)\right) \Delta \varphi \,dsdx \\ 
+\displaystyle\int_{\mathbb{R}^{N}}J_{0|t}^{2-\gamma _{1}}\left( \partial
_{t}\tilde{E}_{\gamma _{1},1}(t,x)\right) u_{0}(x)\varphi _{t}\,dx+%
\displaystyle\int_{\mathbb{R}^{N}}J_{0|t}^{2-\gamma _{1}}\left( \partial
_{t}\left( t\tilde{E}_{\gamma _{1},2}(t,.)\right) u_{1}(x)-u_{1}(x)\right)
\varphi _{t}\,dx \\ 
+\displaystyle\int_{\mathbb{R}^{N}}\displaystyle\int_{0}^{t}\tilde{E}%
_{\gamma _{1},1}\left( t-s\right) \ast f\left( v(s,x)\right) ds\varphi
_{t}\,dx.%
\end{array}%
\end{equation*}%
Using the fact that $u$ is a mild solution, we obtain 
\begin{equation}
\begin{array}{c}
\frac{\partial }{\partial t}I=\displaystyle\int_{\mathbb{R}^{N}}u\Delta
\varphi dx+\displaystyle\int_{\mathbb{R}^{N}}f\left( v(t,x\right) )\varphi
dx+\displaystyle\int_{\mathbb{R}^{N}}J_{0|t}^{2-\gamma _{1}}\left( \partial
_{t}\tilde{E}_{\gamma _{1},1}(t,x)\right) u_{0}(x)\varphi _{t}\,dx \\ 
+\displaystyle\int_{\mathbb{R}^{N}}J_{0|t}^{2-\gamma _{1}}\left( \partial
_{t}\left( t\tilde{E}_{\gamma _{1},2}(t,.)\right) u_{1}(x)-u_{1}(x)\right)
\varphi _{t}\,dx \\ 
+\displaystyle\int_{\mathbb{R}^{N}}\displaystyle\int_{0}^{t}\tilde{E}%
_{\gamma _{1},1}\left( t-s\right) \ast f\left( v(s,x)\right) ds\varphi
_{t}\,dx \\ 
=\displaystyle\int_{\mathbb{R}^{N}}u\Delta \varphi \,dx+\displaystyle\int_{%
\mathbb{R}^{N}}f\left( v(t,x\right) )\varphi \,dx+\displaystyle\int_{\mathbb{%
R}^{N}}J_{0|t}^{2-\gamma _{1}}\left( u_{t}-u_{1}\right) \varphi _{t}\,dx%
\text{.}%
\end{array}
\label{i1}
\end{equation}%
On the other hand, we have 
\begin{equation}
\frac{\partial }{\partial t}I=\int_{\mathbb{R}^{N}}\frac{\partial }{\partial
t}\left[ J_{0|t}^{2-\gamma _{1}}\left( u_{t}-u_{1}\right) \right] \varphi
\,dx+\int_{\mathbb{R}^{N}}J_{0|t}^{2-\gamma _{1}}\left( u_{t}-u_{1}\right)
\varphi _{t}\,dx.  \label{i2}
\end{equation}%
Integrating both sides of (\ref{i1}) and (\ref{i2}) on $[0,T]$, and then
identifying the terms, we get 
\begin{equation*}
\int_{0}^{T}\int_{\mathbb{R}^{N}}\frac{\partial }{\partial t}%
J_{0|t}^{2-\gamma _{1}}\left( u_{t}-u_{1}\right) \varphi
\,dxdt=\int_{0}^{T}\int_{\mathbb{R}^{N}}u\Delta \varphi \,dxdt+\displaystyle%
\int_{0}^{T}\int_{\mathbb{R}^{N}}f\left( v(t,x\right) )\varphi \,dxdt\text{.}
\end{equation*}%
The formula of integration by parts allows to write 
\begin{equation*}
\int_{0}^{T}\int_{\mathbb{R}^{N}}\left( u_{t}-u_{1}\right) D_{t|T}^{\gamma
_{1}}\varphi \,dxdt=\int_{0}^{T}\int_{\mathbb{R}^{N}}u\Delta \varphi
\,dxdt+\int_{0}^{T}\int_{\mathbb{R}^{N}}f\left( v(t,x\right) )\varphi \,dxdt.
\end{equation*}%
By an analogous calculation, we can show that 
\begin{equation*}
\int_{0}^{T}\int_{\mathbb{R}^{N}}\left( v_{t}-v_{1}\right) D_{t|T}^{\gamma
_{2}}\varphi \,dxdt=\int_{0}^{T}\int_{\mathbb{R}^{N}}v\Delta \varphi
\,dxdt+\int_{0}^{T}\int_{\mathbb{R}^{N}}g\left( u(t,x\right) )\varphi \,dxdt.
\end{equation*}%
This completes the proof.
\end{proof}

Our next result concerns the blow-up of solutions of (\ref{sys1}).\qquad\ 

\begin{theorem}[Blow-up of mild solution]
\label{NEG}Let $N\geq 1,$ $p>1,$ $q>1,$ $u_{0},v_{0},u_{1},v_{1}\in
L_{loc}^{p}\left( \mathbb{R}^{N}\right) ,$ $1<\gamma _{1},\gamma _{2}<2$, be
such that $\int_{\mathbb{R}^{N}}u_{1}\left( x\right) dx>0,$ $\int_{\mathbb{R}%
^{N}}v_{1}\left( x\right) dx>0.$ If 
\begin{equation*}
\text{or}%
\begin{array}{c}
\frac{N}{2}<\min \left\{ \frac{1}{\gamma _{1}}+\frac{\gamma _{1}+p\gamma _{2}%
}{\gamma _{1}\left( pq-1\right) },\frac{1}{\gamma _{1}}+\frac{1+p}{\left(
pq-1\right) },\frac{1}{\gamma _{1}}+\frac{\gamma _{2}+q\gamma _{1}}{\gamma
_{1}\left( pq-1\right) },\frac{1-\gamma _{2}}{\gamma _{1}}+\frac{q\left(
p+1\right) }{\left( pq-1\right) }\right\} , \\ 
\\ 
\frac{N}{2}<\min \left\{ \frac{1}{\gamma _{2}}+\frac{\gamma _{1}+\gamma _{2}p%
}{\gamma _{2}\left( pq-1\right) },\frac{1-\gamma _{1}}{\gamma _{2}}+\frac{%
p\left( q+1\right) }{\left( pq-1\right) },\frac{1}{\gamma _{2}}+\frac{\gamma
_{2}+\gamma _{1}q}{\gamma _{2}\left( pq-1\right) },\frac{1}{\gamma _{2}}+%
\frac{1+q}{\left( pq-1\right) }\right\} ,%
\end{array}%
\end{equation*}%
then the mild solution $(u,v)$ of (\ref{sys1})-(\ref{initdat}) blows up in a
finite time. \ \qquad\ 
\end{theorem}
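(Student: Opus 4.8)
The plan is to use the \emph{test function method} of Mitidieri and Pohozaev, adapted to the time-fractional setting via the weak formulation of Definition~\ref{Weaks}. First I would introduce a nonnegative scaling test function of the product form $\varphi(t,x) = \varphi_1(t)\,\varphi_2(x)$, where $\varphi_1(t) = (1 - t/T)_+^{l}$ (so that the explicit Caputo-derivative formula recalled in the Preliminaries applies) and $\varphi_2(x) = \Phi(|x|^2/R^2)$ for a smooth cutoff $\Phi$ supported in the unit ball, with $R$ a second scaling parameter. Using the weak formulation together with the hypotheses $\int_{\mathbb{R}^N} u_1\,dx > 0$ and $\int_{\mathbb{R}^N} v_1\,dx > 0$, I would argue by contradiction: assuming the solution is global, I obtain for each equation an inequality of the form
\begin{equation*}
\int_0^T\int_{\mathbb{R}^N} |v|^{p}\,\varphi\,dx\,dt + c_1 \le \int_0^T\int_{\mathbb{R}^N} |u|\,|D_{t|T}^{\gamma_1}\varphi|\,dx\,dt + \int_0^T\int_{\mathbb{R}^N} |u|\,|\Delta\varphi|\,dx\,dt,
\end{equation*}
and symmetrically for $|u|^{q}$, where the positive constant $c_1$ comes from the initial-data term $\int u_1 (D_{t|T}^{\gamma_1-1}\varphi)(0,\cdot)\,dx$, which is positive by the sign assumption.

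The core of the argument is then to bound the right-hand side. I would apply Young's (or Hölder's) inequality to each product, splitting, for instance, $|u|\,|D_{t|T}^{\gamma_1}\varphi| = \bigl(|u|\,\varphi^{1/q}\bigr)\cdot\bigl(\varphi^{-1/q}|D_{t|T}^{\gamma_1}\varphi|\bigr)$, so that a small multiple of $\int |u|^{q}\varphi$ can be absorbed into the left-hand side of the companion inequality, and the remaining factor is a pure integral of test-function derivatives,
\begin{equation*}
\mathcal{I} := \int_0^T\int_{\mathbb{R}^N} \varphi^{-q'/q}\,|D_{t|T}^{\gamma_1}\varphi|^{q'}\,dx\,dt + \int_0^T\int_{\mathbb{R}^N} \varphi^{-q'/q}\,|\Delta\varphi|^{q'}\,dx\,dt,
\end{equation*}
with $q'$ the conjugate exponent. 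Using the explicit formula $^{C}D_{t|T}^{\gamma_1}\varphi_1 = \tfrac{\Gamma(l+1)}{\Gamma(l+1-\gamma_1)} T^{-\gamma_1}(1 - t/T)_+^{l-\gamma_1}$ together with the spatial scaling $\Delta_x\varphi_2 \sim R^{-2}$, the change of variables $t = T\tau$, $x = R y$ turns $\mathcal{I}$ into $C\,T^{-a}R^{-b}\,T R^{N}$ for explicit exponents $a,b$ depending on $\gamma_1,q,l$ and on whether the time or space derivative dominates. Coupling the two inequalities (the $u$-equation feeds $\int|v|^p\varphi$ and vice versa) and eliminating the cross terms yields a master inequality of the shape $c \le C\,T^{-\theta_1}R^{-\theta_2}(\text{volume factors})$.

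The decisive step is the \emph{choice of the scaling relation between $T$ and $R$} and the sign of the resulting exponent. The natural parabolic-type scaling here is $R^2 \sim T^{\gamma_1}$ (respectively $T^{\gamma_2}$), reflecting the order of the fractional time derivative against the Laplacian; substituting this into the master inequality collapses the right-hand side to a single power $C\,T^{-\mu}$ where $\mu$ is a linear combination of the four quantities appearing in each $\min$ in the statement. The precise hypothesis $\tfrac{N}{2} < \min\{\cdots\}$ is exactly the condition guaranteeing $\mu > 0$ in \emph{every} one of the competing terms, so that letting $T \to \infty$ forces the left-hand side constant $c$ to vanish --- a contradiction, establishing that $T_{\max} < \infty$. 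I expect the main obstacle to be \emph{bookkeeping the two coupled exponents correctly}: because the system has two different fractional orders $\gamma_1 \le \gamma_2$, the optimal scaling and the resulting decay rate differ for the two equations, and one must verify that each of the four entries in each $\min$ arises as a genuine constraint from one of the four product terms (two time-derivative terms and two Laplacian terms across the two coupled inequalities). Care is also needed to check that $l$ can be chosen large enough that the exponents $l-\gamma_i$ and $l - q'\gamma_i /q$ keep the test-function integrals finite and that all boundary contributions at $t=T$ vanish.
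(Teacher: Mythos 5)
Your proposal is essentially the paper's own argument: the same Mitidieri--Pohozaev test function method with $\varphi=\varphi_1(x)\varphi_2(t)$, $\varphi_2(t)=(1-t/T)_+^l$, a rescaled spatial cutoff, H\"older/Young splitting to absorb the nonlinear integrals, the scaling $x=T^{\lambda}y$, $t=T\tau$ with $\lambda=\gamma_i/2$ (your $R^2\sim T^{\gamma_i}$), and the contradiction driven by $\int u_1>0$, $\int v_1>0$ as $T\to\infty$. One small correction: in the weak formulation the term $\int_{\mathbb{R}^N}u_0\,(D_{t|T}^{\gamma_1-1}\varphi)(0,\cdot)\,dx$ carries $u_0$, while the positive quantity you actually need comes from $\int_0^T\int_{\mathbb{R}^N}u_1\,D_{t|T}^{\gamma_1-1}\varphi\,dx\,dt\sim T^{2-\gamma_1}\int u_1\varphi_1\,dx$, exactly as in the paper.
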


\section{Global Existence and Decay Estimates}

\textbf{Proof of Theorem} \ref{GELT}.\bigskip

The proof of Theorem \ref{GELT} proceeds in three steps. Without loss of
generality, we assume that $1<\gamma _{1}\leq \gamma _{2}<2$ and $q\geq
p\geq 1$ such that $pq>1$.\newline

\noindent \textbf{First step:} \textbf{Global existence for} $\left(
u,v\right) $ \textbf{in} $L^{s_{1}}\left( \mathbb{R}^{N}\right) \times
L^{s_{2}}\left( \mathbb{R}^{N}\right) .$ \newline

Since $pq>1$, from (\ref{critdimension}) we have for $N\geq 2$ that 
\begin{equation*}
\frac{N}{2}\geq \max \left\{ \frac{1}{\gamma _{1}}+\frac{q+1}{pq-1},\frac{1}{%
\gamma _{1}}+\frac{p\gamma _{2}+\gamma _{1}}{\gamma _{1}\left( pq-1\right) }%
\right\} .
\end{equation*}%
If $\max \left\{ \frac{1}{\gamma _{1}}+\frac{q+1}{pq-1},\frac{1}{\gamma _{1}}%
+\frac{p\gamma _{2}+\gamma _{1}}{\gamma _{1}\left( pq-1\right) }\right\} =%
\frac{1}{\gamma _{1}}+\frac{q+1}{pq-1},$ then $\frac{N}{2}\geq \frac{1}{%
\gamma _{1}}+\frac{q+1}{pq-1}$, which gives 
\begin{equation*}
1-\frac{pq-1}{q(p+1)\gamma _{2}}<1-\frac{pq-1}{2q(p+1)}<\frac{pq-1+q\gamma
_{1}+\gamma _{1}}{\gamma _{1}q(p+1)}\leq \frac{N\left( pq-1\right) }{2q(p+1)}%
.
\end{equation*}%
If $\max \left\{ \frac{1}{2}+\frac{q+1}{pq-1},\frac{1}{\gamma _{1}}+\frac{%
p\gamma _{2}+\gamma _{1}}{\gamma _{1}\left( pq-1\right) }\right\} =\frac{1}{%
\gamma _{1}}+\frac{p\gamma _{2}+\gamma _{1}}{\gamma _{1}\left( pq-1\right) }$%
. That is $\frac{1}{\gamma _{1}}+\frac{q+1}{pq-1}\leq \frac{1}{\gamma _{1}}+%
\frac{p\gamma _{2}+\gamma _{1}}{\gamma _{1}\left( pq-1\right) },$ in this
case 
\begin{equation*}
\frac{N}{2}\geq \frac{1}{\gamma _{1}}+\frac{p\gamma _{2}+\gamma _{1}}{\gamma
_{1}\left( pq-1\right) }\geq \frac{1}{\gamma _{1}}+\frac{q+1}{pq-1},
\end{equation*}%
which gives again $\frac{N\left( pq-1\right) }{2q(p+1)}>1-\frac{pq-1}{%
q(p+1)\gamma _{2}}$, and since $1-\frac{pq-1}{q(p+1)\gamma _{2}}<1$, we can
choose $\delta >0$ such that%
\begin{equation}
1-\frac{pq-1}{q(p+1)\gamma _{2}}<\delta <\min \left\{ 1,\frac{N\left(
pq-1\right) }{2q(p+1)}\right\} .  \label{delta}
\end{equation}%
We set 
\begin{equation*}
r_{1}=\frac{N\gamma _{1}\left( pq-1\right) }{2\left[ \gamma _{1}\left(
1+\delta p\right) +\gamma _{2}p\left( 1-\delta \right) \right] }\text{, }%
\qquad r_{2}=\frac{N\gamma _{2}\left( pq-1\right) }{2\left[ \gamma
_{2}\left( 1+\delta q\right) +\gamma _{1}q\left( 1-\delta \right) \right] }%
\text{,}
\end{equation*}%
\begin{equation}
\frac{1}{s_{1}}=\frac{2\delta }{N}\frac{p+1}{pq-1},\text{ }\qquad \frac{1}{%
s_{2}}=\frac{2\delta }{N}\frac{q+1}{pq-1},  \label{sonerone}
\end{equation}%
\begin{equation*}
\sigma _{1}=\frac{\left( 1-\delta \right) \left( \gamma _{1}+\gamma
_{2}p\right) }{pq-1}\text{, }\qquad \sigma _{2}=\frac{\left( 1-\delta
\right) \left( \gamma _{2}+\gamma _{1}q\right) }{pq-1}.
\end{equation*}%
Clearly, we have 
\begin{equation*}
\frac{1}{r_{1}}=\frac{2}{N\gamma _{1}}\frac{\left( 1-\delta \right) \left(
\gamma _{1}+\gamma _{2}p\right) }{pq-1}+\frac{2\delta }{N}\frac{\left(
p+1\right) }{pq-1},
\end{equation*}%
\begin{equation*}
\frac{1}{r_{2}}=\frac{2}{N\gamma _{2}}\frac{\left( 1-\delta \right) \left(
\gamma _{2}+\gamma _{1}q\right) }{pq-1}+\frac{2\delta }{N}\frac{\left(
q+1\right) }{pq-1}\text{.}
\end{equation*}%
The choice of $\delta $ gives 
\begin{equation*}
\delta >1-\frac{pq-1}{\left( \gamma _{2}+\gamma _{1}q\right) p}\;\,\text{%
implies}\;\,p\sigma _{2}=\frac{\left( 1-\delta \right) \left( \gamma
_{2}+\gamma _{1}q\right) }{pq-1}p<1,
\end{equation*}%
and%
\begin{equation*}
\delta >1-\frac{pq-1}{\left( \gamma _{1}+\gamma _{2}p\right) q}\;\,\text{%
implies}\;\,q\sigma _{1}=\frac{\left( 1-\delta \right) \left( \gamma
_{1}+\gamma _{2}p\right) }{pq-1}q<1\text{.}
\end{equation*}%
It is easy to check that 
\begin{equation*}
s_{1}>q\text{, \ }\;s_{2}>p\text{, \ }\;ps_{1}>s_{2}\text{, }\;qs_{2}>s_{1}%
\text{, \ }\;s_{1}>r_{1}>1\text{, \ }\;s_{2}>r_{2}>1,
\end{equation*}%
\begin{equation*}
\frac{N}{2}\gamma _{1}\left( \frac{1}{r_{1}}-\frac{1}{s_{1}}\right)
q<1,\qquad \frac{N}{2}\gamma _{2}\left( \frac{1}{r_{2}}-\frac{1}{s_{2}}%
\right) p<1\text{,}
\end{equation*}%
and%
\begin{equation*}
\frac{N}{2}\left( \frac{p}{s_{2}}-\frac{1}{s_{1}}\right) =\delta =\frac{N}{2}%
\left( \frac{q}{s_{1}}-\frac{1}{s_{2}}\right) .
\end{equation*}%
One can easily verify that 
\begin{equation*}
\delta >\frac{pq\left( \gamma _{1}-1\right) +1+p\gamma _{2}}{\left[ \gamma
_{1}q+\gamma _{2}\right] p}\;\;\Longleftrightarrow \;\;\left( \gamma _{2}-%
\frac{N}{2}\gamma _{2}\left( \frac{q}{s_{1}}-\frac{1}{s_{2}}\right) -q\sigma
_{1}\right) p>-1.
\end{equation*}

Let $\left( u_{0},v_{0}\right) \in L^{r_{1}}\left( \mathbb{R}^{N}\right)
\times L^{r_{2}}\left( \mathbb{R}^{N}\right) $. Let $u\in C\left( \left[
0,T_{\max }\right) ;L^{s_{1}}\left( \mathbb{R}^{N}\right) \right) \newline
$and $v\in C\left( \left[ 0,T_{\max }\right) ;L^{s_{2}}\left( \mathbb{R}%
^{N}\right) \right) $. For $t\in \lbrack 0,T_{\max })$, from (\ref{sys1}),
we have%
\begin{eqnarray}
\left\Vert u(t,.)\right\Vert _{s_{1}} &\leq &\left\Vert \tilde{E}_{\gamma
_{1},1}(t)u_{0}\right\Vert _{s_{1}}+\left\Vert t\tilde{E}_{\gamma
_{1},2}(t,.)\right\Vert _{s_{1}}  \notag \\
&&+\int_{0}^{t}(t-\tau )^{\gamma _{1}-1}\left\Vert \tilde{E}_{\gamma
_{1},\gamma _{1}}(t-\tau )\left\vert v(\tau ,.)\right\vert ^{p}\right\Vert
_{s_{1}}d\tau ,  \label{lso1}
\end{eqnarray}%
\begin{eqnarray}
\left\Vert v\left( t,.\right) \right\Vert _{s_{2}} &\leq &\left\Vert \tilde{E%
}_{\gamma _{2},1}(t)v_{0}\right\Vert _{s_{2}}+\left\Vert t\tilde{E}_{\gamma
_{2},2}(t,.)\right\Vert _{s_{2}}  \notag \\
&&+\int_{0}^{t}(t-\tau )^{\gamma _{2}-1}\left\Vert \tilde{E}_{\gamma
_{2},\gamma _{2}}(t-\tau )\left\vert u(\tau ,.)\right\vert ^{q}\right\Vert
_{s_{2}}\,d\tau .  \label{lso2}
\end{eqnarray}%
Applying Lemmas \ref{galpha} and \ref{Linfty}, we get

\begin{eqnarray}
\left\Vert u\left( t,.\right) \right\Vert _{s_{1}} &\leq &t^{-\sigma
_{1}}\left\Vert u_{0}\right\Vert _{r_{1}}+t^{-\sigma _{1}}\Vert u_{1}\Vert _{%
\mathcal{\dot{H}}_{r_{1}}^{-\frac{2}{\gamma _{1}}}}  \notag \\
&&+\,C\int_{0}^{t}\left( t-\tau \right) ^{\gamma _{1}-1}\left( t-\tau
\right) ^{-\frac{N}{2}\gamma _{1}\left( \frac{p}{s_{2}}-\frac{1}{s_{1}}%
\right) }\left\Vert v\left( \tau ,.\right) \right\Vert _{s_{2}}^{p}d\tau 
\text{,}  \label{lso3}
\end{eqnarray}%
\begin{eqnarray}
\left\Vert v\left( t,.\right) \right\Vert _{s_{2}} &\leq &t^{-\sigma
_{2}}\left\Vert v_{0}\right\Vert _{r_{2}}+t^{-\sigma _{1}}\Vert v_{1}\Vert _{%
\mathcal{\dot{H}}_{r_{2}}^{-\frac{2}{\gamma _{2}}}}  \notag \\
&&+\,C\int_{0}^{t}\left( t-\tau \right) ^{\gamma _{2}-1}\left( t-\tau
\right) ^{-\frac{N}{2}\gamma _{2}\left( \frac{q}{s_{1}}-\frac{1}{s_{2}}%
\right) }\left\Vert u\left( \tau ,.\right) \right\Vert _{s_{1}}^{q}d\tau 
\text{.}  \label{lso4}
\end{eqnarray}%
Using (\ref{lso4}) into (\ref{lso3}), we obtain%
\begin{eqnarray*}
&&\left\Vert u\left( t,.\right) \right\Vert _{s_{1}}\leq \left( \left\Vert
u_{0}\right\Vert _{r_{1}}+\Vert u_{1}\Vert _{\mathcal{\dot{H}}_{r_{1}}^{-%
\frac{2}{\gamma _{1}}}}\right) t^{-\sigma _{1}}+\,C\int_{0}^{t}\left( t-\tau
\right) ^{\gamma _{1}-1}\left( t-\tau \right) ^{-\frac{N}{2}\gamma
_{1}\left( \frac{p}{s_{2}}-\frac{1}{s_{1}}\right) }d\tau \\
&&\qquad \times \left( \left( \left\Vert v_{0}\right\Vert _{r_{2}}+\Vert
v_{1}\Vert _{\mathcal{\dot{H}}_{r_{2}}^{-\frac{2}{\gamma _{2}}}}\right)
t^{-\sigma _{2}}+\,C\int_{0}^{t}\left( t-\tau \right) ^{\gamma _{2}-1}\left(
t-\tau \right) ^{-\frac{N}{2}\gamma _{2}\left( \frac{q}{s_{1}}-\frac{1}{s_{2}%
}\right) }\left\Vert u\left( t,.\right) \right\Vert _{s_{1}}^{q}d\tau
\right) ^{p},
\end{eqnarray*}%
provided that $1-\frac{1}{\gamma _{1}}<\frac{N}{2}\left( \frac{q}{s_{1}}-%
\frac{1}{s_{2}}\right) <1$ and $1-\frac{1}{\gamma _{2}}<\frac{N}{2}\left( 
\frac{p}{s_{2}}-\frac{1}{s_{1}}\right) <1$ which are indeed satisfied.

Hence 
\begin{eqnarray}
\left\Vert u(t,.)\right\Vert _{s_{1}} &\leq &\left( \left\Vert
u_{0}\right\Vert _{r_{1}}+\Vert u_{1}\Vert _{\mathcal{\dot{H}}_{r_{1}}^{-%
\frac{2}{\gamma _{1}}}}\right) t^{-\sigma _{1}}  \notag \\
&&+\,C\displaystyle\int_{0}^{t}\left( t-\tau \right) ^{\gamma _{1}-1-\frac{N%
}{2}\gamma _{1}\left( \frac{p}{s_{2}}-\frac{1}{s_{1}}\right) }\tau
^{-p\sigma _{2}}d\tau \left( \left\Vert v_{0}\right\Vert _{r_{2}}+\Vert
v_{1}\Vert _{\mathcal{\dot{H}}_{r_{2}}^{-\frac{2}{\gamma _{2}}}}\right) ^{p}
\label{lsom} \\
&&+\,C\int_{0}^{t}\left( t-\tau \right) ^{\gamma _{1}-1-\frac{N}{2}\gamma
_{1}\left( \frac{p}{s_{2}}-\frac{1}{s_{1}}\right) }\tau ^{\left( \gamma _{2}-%
\frac{N}{2}\gamma _{2}\left( \frac{q}{s_{1}}-\frac{1}{s_{2}}\right) -q\sigma
_{1}\right) p}\left( \tau ^{^{\sigma _{1}}}\left\Vert u\left( \tau ,.\right)
\right\Vert _{s_{1}}\right) ^{pq}\,d\tau .  \notag
\end{eqnarray}%
Multiplying both sides of (\ref{lsom}) by $t^{\sigma _{1}}$ with $\sigma
_{1}=\frac{\left( 1-\delta \right) \left( \gamma _{1}+\gamma _{2}p\right) }{%
pq-1},$ we get

\begin{eqnarray}
t^{\sigma _{1}}\left\Vert u\left( t,.\right) \right\Vert _{s_{1}} &\leq
&\left\Vert u_{0}\right\Vert _{r_{1}}+\Vert u_{1}\Vert _{\mathcal{\dot{H}}%
_{r_{1}}^{-\frac{2}{\gamma _{1}}}}  \notag \\
&&+\,Ct^{\sigma _{1}}\int_{0}^{t}\left( t-\tau \right) ^{\gamma _{1}-1-\frac{%
N}{2}\gamma _{1}\left( \frac{p}{s_{2}}-\frac{1}{s_{1}}\right) }\tau
^{-p\sigma _{2}}d\tau \left( \left\Vert v_{0}\right\Vert _{r_{2}}+\Vert
v_{1}\Vert _{\mathcal{\dot{H}}_{r_{2}}^{-\frac{2}{\gamma _{2}}}}\right) ^{p}
\label{lso5} \\
&&+\,Ct^{\sigma _{1}}\int_{0}^{t}\left( t-\tau \right) ^{\gamma _{1}-1-\frac{%
N}{2}\gamma _{1}\left( \frac{p}{s_{2}}-\frac{1}{s_{1}}\right) }\tau ^{\left(
\gamma _{2}-\frac{N}{2}\gamma _{2}\left( \frac{q}{s_{1}}-\frac{1}{s_{2}}%
\right) -q\sigma _{1}\right) p}\left( \tau ^{^{\sigma _{1}}}\left\Vert
u\left( \tau ,.\right) \right\Vert _{s_{1}}\right) ^{pq}d\tau .  \notag
\end{eqnarray}

Since $\gamma _{1}-1-\frac{N}{2}\gamma _{1}\left( \frac{p}{s_{2}}-\frac{1}{%
s_{1}}\right) >-1,$ $\left( \gamma _{2}-\frac{N}{2}\gamma _{2}\left( \frac{q%
}{s_{1}}-\frac{1}{s_{2}}\right) -q\sigma _{1}\right) p>-1,$ we have 
\begin{eqnarray}
t^{\sigma _{1}}\left\Vert u\left( t,.\right) \right\Vert _{s_{1}} &\leq
&\left\Vert u_{0}\right\Vert _{r_{1}}+\Vert u_{1}\Vert _{\mathcal{\dot{H}}%
_{r_{1}}^{-\frac{2}{\gamma _{1}}}}+Ct^{\sigma _{1}+\gamma _{1}-\frac{N}{2}%
\gamma _{1}\left( \frac{p}{s_{2}}-\frac{1}{s_{1}}\right) -p\sigma
_{2}}\left( \left\Vert v_{0}\right\Vert _{r_{2}}^{p}+\Vert v_{1}\Vert _{%
\mathcal{\dot{H}}_{r_{2}}^{-\frac{2}{\gamma _{2}}}}^{p}\right)  \notag \\
&&+Ct^{\sigma _{1}+\gamma _{1}-\frac{N}{2}\gamma _{1}\left( \frac{p}{s_{2}}-%
\frac{1}{s_{1}}\right) +\left( \gamma _{2}-\frac{N}{2}\gamma _{2}\left( 
\frac{q}{s_{1}}-\frac{1}{s_{2}}\right) -q\sigma _{1}\right) p}\left(
\sup_{0\leq \tau \leq t}\tau ^{^{\sigma _{1}}}\left\Vert u\left( \tau
,.\right) \right\Vert _{s_{1}}\right) ^{pq}.  \notag \\
&&  \label{lso6}
\end{eqnarray}%
Note that%
\begin{equation*}
\sigma _{1}=\frac{N}{2}\gamma _{1}\left( \frac{1}{r_{1}}-\frac{1}{s_{1}}%
\right) \text{,}
\end{equation*}%
\begin{equation*}
\sigma _{1}+\gamma _{1}-\frac{N}{2}\gamma _{1}\left( \frac{p}{s_{2}}-\frac{1%
}{s_{1}}\right) -p\sigma _{2}=0\text{,}
\end{equation*}%
\begin{equation*}
\sigma _{1}+\gamma _{1}-\frac{N}{2}\gamma _{1}\left( \frac{p}{s_{2}}-\frac{1%
}{s_{1}}\right) +\left( \gamma _{2}-\frac{N}{2}\gamma _{2}\left( \frac{q}{%
s_{1}}-\frac{1}{s_{2}}\right) -q\sigma _{1}\right) p=0,
\end{equation*}%
\begin{equation*}
\sigma _{1}+\gamma _{1}-\gamma _{1}\delta +\left( \gamma _{2}-\gamma
_{2}\delta -q\sigma _{1}\right) p=0\text{.}
\end{equation*}%
Define $f(t)=\sup\limits_{0\leq \tau \leq t}\tau ^{^{\sigma _{1}}}\left\Vert
u\left( \tau ,.\right) \right\Vert _{s_{1}},$ $t\in \left[ 0,T_{\max
}\right) $. So we deduce from (\ref{lso5}) that%
\begin{equation}
f(t)\leq C\left( \left\Vert u_{0}\right\Vert _{r_{1}}+\Vert u_{1}\Vert _{%
\mathcal{\dot{H}}_{r_{1}}^{-\frac{2}{\gamma _{1}}}}+\left\Vert
v_{0}\right\Vert _{r_{2}}^{p}+\Vert v_{1}\Vert _{\mathcal{\dot{H}}_{r_{2}}^{-%
\frac{2}{\gamma _{2}}}}^{p}+f(t)^{pq}\right) ,\text{ }  \label{lso7}
\end{equation}%
for all $t\in \left( 0,T_{\max }\right) $.\newline
Setting 
\begin{equation*}
A=\left\Vert u_{0}\right\Vert _{r_{1}}+\Vert u_{1}\Vert _{\mathcal{\dot{H}}%
_{r_{1}}^{-\frac{2}{\gamma _{1}}}}+\left\Vert v_{0}\right\Vert
_{r_{2}}^{p}+\Vert v_{1}\Vert _{\mathcal{\dot{H}}_{r_{2}}^{-\frac{2}{\gamma
_{2}}}}^{p}\text{.}
\end{equation*}%
Now if we take $A$ small enough such that $A<\left( 2C\right) ^{\frac{pq}{%
1-pq}}$, then it follows by continuity argument that (\ref{lso7}) implies 
\begin{equation}
f(t)\leq 2CA\text{, for all }t\in \left[ 0,T_{\max }\right) .
\label{epsimpli}
\end{equation}%
Indeed, if (\ref{epsimpli}) is not true. That is to say $f(t_{0})>2CA$,
holds true for some\newline
$t_{0}\in \left( 0,T_{\max }\right) $. By the intermediate value theorem
since $f$ is continuous, non-decreasing and $f\left( 0\right) =0$, there
exists $t_{1}\in \left( 0,t_{0}\right) $ such that $f(t_{1})=2CA$. From (\ref%
{lso7}), we get 
\begin{equation}
2CA=f(t_{1})\leq C\left( A+f(t_{1})^{pq}\right) ,\text{ }  \label{nineq}
\end{equation}%
from which, it yields%
\begin{equation*}
2CA\leq C\left( A+\left( 2CA\right) ^{pq}\right) \text{,}
\end{equation*}%
which is equivalent to%
\begin{equation*}
A\geq \left( 2C\right) ^{\frac{pq}{1-pq}}\text{.}
\end{equation*}%
This is a contradiction. Therefore, it follows that%
\begin{equation}
f(t)\leq 2CA\text{, for any }t\in \left[ 0,T_{\max }\right) \text{.}
\label{inv}
\end{equation}%
Thus 
\begin{equation}
t^{\sigma _{1}}\left\Vert u\left( t,.\right) \right\Vert _{s_{1}}\leq C\text{%
, for any }t\in \left[ 0,T_{\max }\right) \text{. }  \label{lso8}
\end{equation}%
Similarly, we obtain%
\begin{equation}
t^{\sigma _{2}}\left\Vert v\left( t,.\right) \right\Vert _{s_{2}}\leq C\text{%
, for any }t\in \left[ 0,T_{\max }\right) \text{.}  \label{lso9}
\end{equation}%
Now, from (\ref{lso1}), (\ref{lso2}) and Lemma \ref{Linfty}, we can easily
see that%
\begin{equation}
\left\Vert u\left( t,.\right) \right\Vert _{\infty },\text{ }\left\Vert
v\left( t,.\right) \right\Vert _{\infty }\leq C\text{, for any }t\in \left[
0,1\right] \text{.}  \label{lso10}
\end{equation}%
On the other hand, since $s_{1}$ and $s_{2}$ satisfy 
\begin{equation*}
\frac{\left( 1-\delta \right) \left( p+1\right) s_{1}}{\left( pq-1\right)
s_{2}}\gamma _{2}<1,\text{ \ }\quad \frac{\left( 1-\delta \right) \left(
q+1\right) s_{2}}{\left( pq-1\right) s_{1}}\gamma _{2}<1,
\end{equation*}%
it follows from (\ref{lso1}), (\ref{lso2}), Lemma \ref{galpha} and Lemma \ref%
{Linfty} that 
\begin{eqnarray}
&&\left\Vert u\left( t,.\right) \right\Vert _{s_{1}}\leq \left\Vert \tilde{E}%
_{\gamma _{1},1}\left( t\right) u_{0}\right\Vert _{s_{1}}+t\left\Vert \tilde{%
E}_{\gamma _{2},2}\left( t\right) u_{1}\right\Vert _{s_{1}}\qquad  \notag \\
&&\qquad \qquad \;\;+\int_{0}^{t}\left( t-\tau \right) ^{\gamma
_{1}-1}\left\Vert \tilde{E}_{\gamma _{1},\gamma _{1}}\left( t-\tau \right)
\left\vert v\left( \tau ,.\right) \right\vert ^{p}\right\Vert _{s_{1}}d\tau
\qquad  \notag \\
&&\qquad \qquad \;\;\leq C\left\Vert u_{0}\right\Vert _{s_{1}}+t\left\Vert
u_{1}\right\Vert _{s_{1}}+C\int_{0}^{t}\left( t-\tau \right) ^{\gamma
_{1}-1}\left\Vert \left\vert v\left( \tau ,.\right) \right\vert
^{p}\right\Vert _{s_{1}}d\tau \qquad  \notag \\
&&\qquad \qquad \;\;\leq C\left\Vert u_{0}\right\Vert _{s_{1}}+\left\Vert
u_{1}\right\Vert _{s_{1}}+C\sup_{\tau \in \left( 0,t\right) }\left\Vert
v\left( \tau \right) \right\Vert _{\infty }^{p-\frac{s_{2}}{s_{1}}}%
\displaystyle\int_{0}^{t}\left( t-\tau \right) ^{\gamma _{1}-1}\left\Vert
v\left( \tau ,.\right) \right\Vert _{s_{2}}^{\frac{s_{2}}{s_{1}}}d\tau \qquad
\notag \\
&&\qquad \qquad \;\;\leq C\left\Vert u_{0}\right\Vert _{s_{1}}+\left\Vert
u_{1}\right\Vert _{s_{1}}+C\sup_{\tau \in \left( 0,t\right) }\left\Vert
v\left( \tau \right) \right\Vert _{\infty }^{p-\frac{s_{2}}{s_{1}}}%
\displaystyle\int_{0}^{t}\left\Vert v\left( \tau ,.\right) \right\Vert
_{s_{2}}^{\frac{s_{2}}{s_{1}}}d\tau  \label{lso11}
\end{eqnarray}%
for all $t\in \left[ 0,1\right] $. Hence $\left\Vert u\left( t,.\right)
\right\Vert _{s_{1}}\leq C,$ for any $t\in \left[ 0,1\right] $.\newline

Analogously, 
\begin{equation}
\left\Vert v\left( t,.\right) \right\Vert _{s_{2}}\leq C, \; \, \text{for all%
} \; \, t\in \left[ 0,1\right] .  \label{lso12}
\end{equation}

From (\ref{lso8}), (\ref{lso9}), (\ref{lso11}), (\ref{lso12}) and Lemma \ref%
{poldec}, we conclude that 
\begin{equation}
\left\{ 
\begin{array}{l}
\left\Vert u\left( t,.\right) \right\Vert _{s_{1}}\leq C\left( t+1\right) ^{-%
\frac{\left( 1-\delta \right) \left( \gamma _{1}+p\gamma _{2}\right) }{pq-1}%
},\bigskip \\ 
\left\Vert u\left( t,.\right) \right\Vert _{s_{2}}\leq C\left( t+1\right) ^{-%
\frac{\left( 1-\delta \right) \left( \gamma _{2}+q\gamma _{1}\right) }{pq-1}%
},%
\end{array}%
\right.  \label{lso13}
\end{equation}%
for all $t\in \left[ 0,T_{\max }\right) $ .

\noindent \textbf{Second step. }$L^{\infty }$\textbf{-global existence
estimates of\ }$\left( u,v\right) $ in $L^{\infty }\left( \mathbb{R}%
^{N}\right) \times L^{\infty }\left( \mathbb{R}^{N}\right) $.\newline

Let $s_{1},$ $s_{2}$ be as in (\ref{sonerone}). Since $p\leq q,$ we have 
\begin{equation*}
\frac{Np}{2s_{2}}\leq \frac{Nq}{2s_{1}}\text{.}
\end{equation*}%
We further assume, for some $\xi >q$ and $w>p,$ that $u(t)\in L^{w}\left( 
\mathbb{R}^{N}\right) ,$ $v(t)\in L^{\xi }\left( \mathbb{R}^{N}\right) ,$
and 
\begin{equation}
\left\{ 
\begin{array}{l}
\left\Vert u\left( t,.\right) \right\Vert _{w}\leq C\left(
1+t^{k_{1}}\right) ,\;\,t\in \left[ 0,T_{\max }\right) ,\bigskip \\ 
\left\Vert v\left( t,.\right) \right\Vert _{\xi }\leq C\left(
1+t^{k_{2}}\right) ,\;\,t\in \left[ 0,T_{\max }\right) ,%
\end{array}%
\right.  \label{lso14}
\end{equation}%
holds true for some positive constants $k_{1}$ and $k_{2}$. Then, by (\ref%
{lso1}), (\ref{lso2}) and Lemma \ref{Linfty}, we have 
\begin{equation*}
\qquad \left\Vert u\left( t,.\right) \right\Vert _{\infty }\leq \left\Vert 
\tilde{E}_{\gamma _{1},1}\left( t\right) u_{0}\right\Vert _{\infty
}+t\left\Vert \tilde{E}_{\gamma _{2},2}\left( t\right) u_{1}\right\Vert
_{\infty }
\end{equation*}%
\begin{equation}
\quad \quad \quad \quad \quad \quad \quad \quad +\int_{0}^{t}\left( t-\tau
\right) ^{\gamma _{1}-1-\frac{N\gamma _{1}p}{2\xi }}\left\Vert v\left( \tau
,.\right) \right\Vert _{\xi }^{p}d\tau ,  \label{lso15}
\end{equation}%
\begin{equation*}
\qquad \left\Vert v\left( t,.\right) \right\Vert _{\infty }\leq \left\Vert 
\tilde{E}_{\gamma _{2},1}\left( t\right) v_{0}\right\Vert _{\infty
}+t\left\Vert \tilde{E}_{\gamma _{2},2}\left( t\right) u_{1}\right\Vert
_{\infty }
\end{equation*}%
\begin{equation}
\quad \quad \quad \quad \quad \quad \quad \quad +\int_{0}^{t}\left( t-\tau
\right) ^{\gamma _{2}-1-\frac{N\gamma _{2}q}{2w}}\left\Vert u\left( \tau
,.\right) \right\Vert _{w}^{q}d\tau ,  \label{lso16}
\end{equation}%
for all $t\in \left[ 0,T_{\max }\right) $. If one can find $\xi $ and $w$
such that 
\begin{equation}
\frac{Np}{2\xi }<1\qquad \text{or }\qquad \frac{Nq}{2w}<1,  \label{test}
\end{equation}%
then the $L^{\infty }$-estimates of\textbf{\ }$\left( u,v\right) $ is
obtained. In fact, if $\frac{Np}{2\xi }<1,$ in view of (\ref{lso14}), it
yields from (\ref{lso15}) that%
\begin{eqnarray}
\left\Vert u\left( t,.\right) \right\Vert _{\infty } &\leq &\left\Vert 
\tilde{E}_{\gamma _{1},1}\left( t\right) u_{0}\right\Vert _{\infty
}+C\max_{\tau \in \left[ 0,t\right] }\left\Vert v\left( \tau ,.\right)
\right\Vert _{\xi }^{p}t^{\left( 1-\frac{Np}{2\xi }\right) \gamma _{1}} 
\notag \\
&\leq &C\left( 1+t^{\left( 1-\frac{Np}{2\xi }\right) \gamma
_{1}+pk_{2}}\right) ,  \label{lso17}
\end{eqnarray}%
and by taking $w=\infty $ in (\ref{lso16}), we get 
\begin{eqnarray}
\left\Vert v\left( t,.\right) \right\Vert _{\infty } &\leq &\left\Vert 
\tilde{E}_{\gamma _{2},1}\left( t\right) v_{0}\right\Vert _{\infty
}+t\left\Vert \tilde{E}_{\gamma _{2},2}\left( t\right) u_{1}\right\Vert
_{\infty }+\int_{0}^{t}\left( t-\tau \right) ^{\gamma _{2}-1}\left\Vert
u\left( \tau ,.\right) \right\Vert _{\infty }^{q}d\tau \qquad  \notag \\
\qquad \qquad &\leq &\left\Vert \tilde{E}_{\gamma _{2},1}\left( t\right)
v_{0}\right\Vert _{\infty }+t\left\Vert \tilde{E}_{\gamma _{2},2}\left(
t\right) u_{1}\right\Vert _{\infty }\qquad \qquad  \notag \\
&+&\int_{0}^{t}\left( t-\tau \right) ^{\gamma _{2}-1}\left( 1+t^{\left( 1-%
\frac{Np}{2\xi }\right) \gamma _{1}+pk_{2}}\right) ^{q}d\tau \qquad \qquad 
\notag \\
&\leq &C\left( 1+t^{\gamma _{2}+\left[ \left( 1-\frac{Np}{2\xi }\right)
\gamma _{1}+pk_{2}\right] q}\right) \text{.}  \label{lso18}
\end{eqnarray}%
These estimates show that $T_{\max }=\infty $, and 
\begin{equation}
u,v\in L_{loc}^{\infty }\left( \left[ 0,\infty \right) ;L^{\infty }\left( 
\mathbb{R}^{N}\right) \right) .  \label{test1}
\end{equation}%
In a similar manner, we can establish the case $\frac{Nq}{2w}<1$. To find
appropriate $\xi $ and $w,$ we note that (\ref{lso17}) and (\ref{lso18})
hold by taking $\xi =s_{1}$ or $w=s_{2}$ if $\frac{Nq}{2s_{1}}<1$ or $\frac{%
Np}{2s_{2}}<1$; this is certainly the case when $N\leq 2$ with $s_{1}>q$ and 
$s_{2}>p$.

Thus it remains to deal with the case $N>2$, $\frac{Nq}{2s_{1}}\geq 1$ and $%
\frac{Np}{2s_{2}}\geq 1$. We do this via an iterative process. Define $%
s_{1}^{\prime }=s_{1},$ $s_{1}^{\prime \prime }=s_{2},$ since $s_{1}^{\prime
}>q$ and $s_{1}^{\prime \prime }>p,$ using the H\"{o}lder inequality and
Lemmas \ref{galpha} and \ref{Linfty}, we get from (\ref{lso1}), (\ref{lso2})
that 
\begin{eqnarray*}
&&\qquad \left\Vert u\left( t,.\right) \right\Vert _{s_{2}^{\prime }}\leq
\left\Vert \tilde{E}_{\gamma _{1},1}\left( t\right) u_{0}\right\Vert
_{s_{2}^{\prime }}+t\left\Vert \tilde{E}_{\gamma _{1},2}\left( t\right)
u_{1}\right\Vert _{s_{2}^{\prime }}\qquad \\
&&\quad \quad \quad \quad \quad \quad \;\;+\int_{0}^{t}\left( t-\tau \right)
^{\gamma _{1}-1-\frac{N\gamma _{1}}{2}\left( \frac{p}{s_{2}^{\prime \prime }}%
-\frac{1}{s_{2}^{\prime }}\right) }\left\Vert v\left( \tau ,.\right)
\right\Vert _{s_{2}^{\prime \prime }}^{p}d\tau ,\qquad
\end{eqnarray*}%
\begin{eqnarray*}
&&\qquad \left\Vert v\left( t,.\right) \right\Vert _{s_{2}^{\prime \prime
}}\leq \left\Vert \tilde{E}_{\gamma _{2},1}\left( t\right) v_{0}\right\Vert
_{s_{2}^{\prime \prime }}+t\left\Vert \tilde{E}_{\gamma _{2},2}\left(
t\right) u_{1}\right\Vert _{s_{2}^{\prime \prime }}\qquad \\
&&\quad \quad \quad \quad \quad \quad \;\;+\int_{0}^{t}\left( t-\tau \right)
^{\gamma _{2}-1-\frac{N\gamma _{2}}{2}\left( \frac{q}{s_{1}^{\prime }}-\frac{%
1}{s_{2}^{\prime \prime }}\right) }\left\Vert u\left( \tau ,.\right)
\right\Vert _{s_{1}^{\prime }}^{q}d\tau ,\qquad
\end{eqnarray*}%
where $s_{2}^{\prime }$ and $s_{2}^{\prime \prime }$ are such that 
\begin{equation*}
\frac{N}{2}\left( \frac{p}{s_{1}^{\prime \prime }}-\frac{1}{s_{2}^{\prime }}%
\right) <1,\text{ }\qquad \frac{N}{2}\left( \frac{q}{s_{1}^{\prime }}-\frac{1%
}{s_{2}^{\prime \prime }}\right) <1.
\end{equation*}%
This can be shown by taking 
\begin{equation*}
\frac{1}{s_{2}^{\prime }}=\frac{p}{s_{1}^{\prime \prime }}-\frac{2}{N}+\eta
,\qquad \frac{1}{s_{2}^{\prime \prime }}=\frac{q}{s_{1}^{\prime }}-\frac{2}{N%
}+\eta ,
\end{equation*}%
where $0<\eta <\frac{2\left( 1-\delta \right) }{N}$ with $\delta >1-\frac{1}{%
\gamma _{1}}$. Namely%
\begin{equation*}
\frac{N}{2}\left( \frac{p}{s_{1}^{\prime \prime }}-\frac{1}{s_{2}^{\prime }}%
\right) =\frac{N}{2}\left( \frac{q}{s_{1}^{\prime }}-\frac{1}{s_{2}^{\prime
\prime }}\right) =1-\frac{N}{2}\eta >1-\frac{1}{\gamma _{1}}\text{.}
\end{equation*}%
Observe that, since $\delta >1-\frac{pq-1}{q(p+1)\gamma _{2}}>1-\frac{1}{%
\gamma _{2}}$, we have 
\begin{equation*}
1-\frac{1}{\gamma _{1}}<\frac{N}{2}\left( \frac{p}{s_{1}^{\prime \prime }}-%
\frac{1}{s_{2}^{\prime }}\right) <1,\qquad 1-\frac{1}{\gamma _{2}}<\frac{N}{2%
}\left( \frac{q}{s_{1}^{\prime }}-\frac{1}{s_{2}^{\prime \prime }}\right) <1%
\text{,}
\end{equation*}%
\begin{equation}
\frac{1}{s_{1}^{\prime }}-\frac{1}{s_{2}^{\prime }}=\frac{2}{N}\left(
1-\delta \right) -\eta >0,\qquad \frac{1}{s_{1}^{\prime \prime }}-\frac{1}{%
s_{2}^{\prime \prime }}=\frac{2}{N}\left( 1-\delta \right) -\eta >0\text{,}
\label{lso19}
\end{equation}%
and hence $s_{2}^{\prime }>s_{1}^{\prime }>q$ and $s_{2}^{\prime \prime
}>s_{1}^{\prime \prime }>p$.\newline
Next, define the sequences $\left\{ s_{i}^{\prime }\right\} _{i\geq 1}$ and $%
\left\{ s_{i}^{\prime \prime }\right\} _{i\geq 1}$, iteratively, as follows 
\begin{equation}
\frac{1}{s_{i}^{\prime }}=\frac{p}{s_{i-1}^{\prime \prime }}-\frac{2}{N}%
+\eta ,\text{ }\qquad \frac{1}{s_{i}^{\prime \prime }}=\frac{q}{%
s_{i-1}^{\prime }}-\frac{2}{N}+\eta ,\text{ }i\geq 3.  \label{lso20}
\end{equation}%
Then 
\begin{equation*}
\frac{1}{s_{i}^{\prime }}-\frac{1}{s_{i+1}^{\prime }}=p\left( \frac{1}{%
s_{i-1}^{\prime \prime }}-\frac{1}{s_{i}^{\prime \prime }}\right) =pq\left( 
\frac{1}{s_{i-2}^{\prime }}-\frac{1}{s_{i-1}^{\prime }}\right) ,
\end{equation*}%
\begin{equation*}
\frac{1}{s_{i}^{\prime \prime }}-\frac{1}{s_{i+1}^{\prime \prime }}=q\left( 
\frac{1}{s_{i-1}^{\prime }}-\frac{1}{s_{i}^{\prime }}\right) =pq\left( \frac{%
1}{s_{i-2}^{\prime \prime }}-\frac{1}{s_{i-1}^{\prime \prime }}\right) .
\end{equation*}%
Since $pq>1$, in view of (\ref{lso19}), we get 
\begin{equation}
\frac{1}{s_{i}^{\prime }}>\frac{1}{s_{i+1}^{\prime }},\text{ }\qquad \frac{1%
}{s_{i}^{\prime \prime }}>\frac{1}{s_{i+1}^{\prime \prime }}\text{, }i\geq 1%
\text{,}  \label{lso21}
\end{equation}%
and 
\begin{equation}
\lim_{i\rightarrow +\infty }\left( \frac{1}{s_{i}^{\prime }}-\frac{1}{%
s_{i+1}^{\prime }}\right) =\lim_{i\rightarrow +\infty }\left( \frac{1}{%
s_{i}^{\prime \prime }}-\frac{1}{s_{i+1}^{\prime \prime }}\right) =+\infty 
\text{.}  \label{lso22}
\end{equation}%
Now, we ensure that there exists $i_{0}$ such that 
\begin{equation}
\frac{p}{s_{i_{0}}^{\prime \prime }}<\frac{2}{N}\text{ }\qquad \text{or }%
\qquad \frac{q}{s_{i_{0}}^{\prime }}<\frac{2}{N}.  \label{lso23}
\end{equation}%
On the contrary, that is, $\frac{p}{s_{i}^{\prime \prime }}\geq \frac{2}{N}$
and $\frac{q}{s_{i}^{\prime }}\geq \frac{2}{N}$ for all $i\geq 1.$ Then, by (%
\ref{lso20}), we see that $s_{i}^{\prime }>0,$ $s_{i}^{\prime \prime }>0$
for all $i\geq 1$ and hence, by (\ref{lso21}), 
\begin{equation*}
q<s_{1}^{\prime }<...<s_{i}^{\prime }<...,\text{ }p<s_{1}^{\prime \prime
}<...<s_{i}^{\prime \prime }<....
\end{equation*}%
which contradicts \eqref{lso22}.

Let $i_{0}$ be the smallest number satisfying (\ref{lso23}). Notice that $%
i_{0}\geq 2$. Without loss of generality, we assume that 
\begin{equation}
\frac{p}{s_{i_{0}}^{\prime \prime }}<\frac{2}{N},\text{ \ }\frac{p}{%
s_{i}^{\prime \prime }}\geq \frac{2}{N}\text{ for any }1\leq i\leq
i_{0}-1,\qquad \frac{q}{s_{i}^{\prime }}\geq \frac{2}{N}\text{ for any }%
1\leq i\leq i_{0}\text{.}  \label{lso24}
\end{equation}%
It then follows from \eqref{lso20} that 
\begin{equation*}
s_{i}^{\prime }>0\text{ for any }1\leq i\leq i_{0},\qquad s_{i}^{\prime
\prime }>0\text{ for any }1\leq i\leq i_{0}+1\text{,}
\end{equation*}%
which together with (\ref{lso21}) leads to 
\begin{equation*}
q<...<s_{i_{0}-1}^{\prime }<s_{i_{0}}^{\prime },\qquad
p<...<s_{i_{0}}^{\prime \prime }<s_{i_{0}+1}^{\prime \prime }\text{.}
\end{equation*}%
Now, from \eqref{lso20}, we have, for all $i\geq 2$, 
\begin{equation*}
\frac{N}{2}\left( \frac{p}{s_{i-1}^{\prime \prime }}-\frac{1}{s_{i}^{\prime }%
}\right) =1-\frac{N}{2}\eta =\frac{N}{2}\left( \frac{q}{s_{i-1}^{\prime }}-%
\frac{1}{s_{i}^{\prime \prime }}\right) \text{.}
\end{equation*}%
Now, let us deal with the boundedness of $\left( u(t,.),v(t,.)\right) $ in $%
L^{s_{i}^{\prime }}\left( \mathbb{R}^{N}\right) \times L^{s_{i}^{\prime
\prime }}\left( \mathbb{R}^{N}\right) $. Using the H\"{o}lder inequality and
Lemmas \ref{galpha}, \ref{Linfty}, it follows from \eqref{ms1}-\eqref{ms2},
inductively, that 
\begin{eqnarray}
\left\Vert u\left( t,.\right) \right\Vert _{s_{i}^{\prime }} &\leq
&\left\Vert \tilde{E}_{\gamma _{1},1}\left( t\right) u_{0}\right\Vert
_{s_{i}^{\prime }}+t\left\Vert \tilde{E}_{\gamma _{2},2}\left( t\right)
u_{1}\right\Vert _{s_{i}^{\prime }}  \notag \\
&+&C\displaystyle\int_{0}^{t}\left( t-\tau \right) ^{\gamma _{1}-1-\frac{N}{2%
}\gamma _{1}\left( \frac{p}{s_{i-1}^{\prime \prime }}-\frac{1}{s_{i}^{\prime
}}\right) }\left\Vert v\left( \tau ,.\right) \right\Vert _{s_{i-1}^{\prime
\prime }}^{p}d\tau  \notag \\
&\leq &C\left\Vert u_{0}\right\Vert _{s_{i}^{\prime }}+t\left\Vert
u_{1}\right\Vert _{s_{i}^{\prime }}  \notag \\
&+&C\displaystyle\int_{0}^{t}\left( t-\tau \right) ^{\gamma _{1}-1-\gamma
_{1}\left( 1-\frac{N}{2}\eta \right) }\left\Vert v\left( \tau ,.\right)
\right\Vert _{s_{i-1}^{\prime \prime }}^{p}d\tau ,\text{ }  \label{lso25}
\end{eqnarray}%
for any $2\leq i\leq i_{0},$ $t\in \left( 0,T_{\max }\right) $ and 
\begin{eqnarray}
\left\Vert v\left( t,.\right) \right\Vert _{s_{i}^{\prime \prime }} &\leq
&\left\Vert \tilde{E}_{\gamma _{2},1}\left( t\right) v_{0}\right\Vert
_{s_{i}^{\prime \prime }}+t\left\Vert \tilde{E}_{\gamma _{2},2}\left(
t\right) v_{1}\right\Vert _{s_{i}^{\prime \prime }}  \notag \\
&+&C\int_{0}^{t}\left( t-\tau \right) ^{\gamma _{2}-1+\frac{N}{2}\gamma
_{2}\left( \frac{q}{s_{i-1}^{\prime }}-\frac{1}{s_{i}^{\prime \prime }}%
\right) }\left\Vert u\left( \tau ,.\right) \right\Vert _{s_{i-1}^{\prime
}}^{q}d\tau  \notag \\
&\leq &C\left\Vert v_{0}\right\Vert _{s_{i}^{\prime \prime }}+C\left\Vert
v_{1}\right\Vert _{s_{i}^{\prime \prime }}  \notag \\
&+&C\int_{0}^{t}\left( t-\tau \right) ^{\gamma _{2}-1-\gamma _{2}\left( 1-%
\frac{N\eta }{2}\right) }\left\Vert u\left( \tau ,.\right) \right\Vert
_{s_{i-1}^{\prime }}^{q}d\tau ,\text{ }  \label{lso26}
\end{eqnarray}%
for any $t\in \left( 0,T_{\max }\right) $ and for any $2\leq i\leq i_{0}+1$.%
\newline
It clearly follows from \eqref{lso25} and \eqref{lso26} that $u\left(
t\right) \in L^{s_{i}^{\prime }}\left( \mathbb{R}^{N}\right) ,$ $v\left(
t\right) \in L^{s_{i}^{\prime \prime }}\left( \mathbb{R}^{N}\right) $ :%
\newline
\begin{equation}
\left\{ 
\begin{array}{l}
u\left( t,.\right) \in L^{s_{i}^{\prime }}\left( \mathbb{R}^{N}\right) ,%
\text{ }\left\Vert u\left( t,.\right) \right\Vert _{s_{i}^{\prime }}\leq
C\left( 1+t^{a_{i}}\right) ,1\leq \forall i\leq i_{0},\text{ }t\in \left(
0,T_{\max }\right) , \\[5pt] 
v\left( t,.\right) \in L^{s_{i}^{\prime \prime }}\left( \mathbb{R}%
^{N}\right) ,\text{ }\left\Vert v\left( t,.\right) \right\Vert
_{s_{i}^{\prime \prime }}\leq C\left( 1+t^{b_{i}}\right) ,1\leq \forall
i\leq i_{0}+1,\text{ }t\in \left( 0,T_{\max }\right) ,%
\end{array}%
\right.  \label{estlprim}
\end{equation}%
for some positive constants $a_{i},$ $b_{i}.$ Since $\frac{Np}{%
2s_{i_{0}^{\prime \prime }}}<1,$ taking $s_{2}=s_{i_{0}}^{\prime \prime },$ (%
\ref{test}) holds. In consequence, we get $T_{\max }=+\infty $ and that (\ref%
{test1}) holds.

\noindent \textbf{3.} $L^{\infty }$\textbf{-decay estimates.}

Let 
\begin{equation*}
\sigma _{1}=\frac{\left( 1-\delta \right) \left( p\gamma _{2}+\gamma
_{1}\right) }{\left( pq-1\right) }\text{, \ \ \ }\sigma _{2}=\frac{\left(
1-\delta \right) \left( q\gamma _{1}+\gamma _{2}\right) }{\left( pq-1\right) 
}.
\end{equation*}%
If $\frac{pN}{2s_{2}}<1$, by taking $\xi =s_{2}$ in \eqref{lso16} and using (%
\ref{lso13}), we get 
\begin{equation}
\left\Vert u\left( t,.\right) \right\Vert _{\infty }\leq Ct^{-\frac{N\gamma
_{1}}{2r_{1}}}\left\Vert u_{0}\right\Vert _{r_{1}}+Ct^{1-\frac{N\gamma _{1}}{%
2m}}\left\Vert u_{1}\right\Vert _{m}+C\displaystyle\int_{0}^{t}\left( t-\tau
\right) ^{\gamma _{1}-1-\frac{N\gamma _{1}}{2}\frac{p}{s_{2}}}\tau
^{-p\sigma _{2}}d\tau \text{.}  \label{lso27}
\end{equation}%
From (\ref{critdimension}) with $pq>q+2$, we get $2\left( 1+p\right) -\left(
pq-1\right) <\frac{N\left( pq-1\right) }{2q}$ which implies that $\frac{N}{%
2r_{1}}<1$ and for any $m$ depending on $N$ such that $\frac{N}{2}<m<\frac{%
N\gamma _{1}}{2},$ $N\geq 2$, we infer that 
\begin{equation*}
1-\frac{N\gamma _{1}}{2m}<0\text{ \ \ and \ }\frac{N}{2m}<1.
\end{equation*}%
On the other hand, since 
\begin{equation*}
p\sigma _{2}<1,\text{ }\quad \gamma _{1}-\frac{N\gamma _{1}}{2}\frac{p}{s_{2}%
}-p\sigma _{2}=-\frac{\left[ \gamma _{1}+\gamma _{1}p\delta +\left( 1-\delta
\right) p\gamma _{2}\right] }{pq-1},
\end{equation*}%
and 
\begin{equation}
\frac{\gamma _{1}+\gamma _{1}p\delta +p\gamma _{2}\left( 1-\delta \right) }{%
pq-1}=\frac{N\gamma _{1}}{2r_{1}},  \label{comp}
\end{equation}%
it follows from \eqref{lso27} and \eqref{comp} that 
\begin{equation}
\left\Vert u\left( t,.\right) \right\Vert _{\infty }\leq Ct^{-\frac{N}{2r_{1}%
}\gamma _{1}}+Ct^{1-\frac{N}{2m}\gamma _{1}}+Ct^{-\frac{\left[ \gamma
_{1}+\gamma _{1}p\delta +\left( 1-\delta \right) p\gamma _{2}\right] }{pq-1}%
}.  \label{lso28}
\end{equation}%
Therefore, we have from \eqref{estlprim}, \eqref{lso28} and Lemma \ref%
{poldec} that 
\begin{equation*}
\left\Vert u\left( t,.\right) \right\Vert _{\infty }\leq C\left( 1+t\right)
^{-\min \left\{ \frac{N}{2r_{1}}\gamma _{1},\frac{N}{2m}\gamma
_{1}-1\right\} }\text{, for any }t\geq 0.
\end{equation*}%
Similarly, for $\frac{qN}{2s_{1}}<1$ we find that 
\begin{equation}
\left\Vert v\left( t,.\right) \right\Vert _{\infty }\leq C\left( 1+t\right)
^{-\min \left\{ \frac{N}{2r_{2}}\gamma _{2},1-\frac{N}{2m}\gamma
_{2}\right\} }\text{, for any }t\geq 0\text{.}  \label{lso29}
\end{equation}%
Also, (\ref{lso28}) holds as $pN/\left( 2s_{2}\right) \leq qN/\left(
2s_{1}\right) .$

In particular, if $pq>\gamma _{2}\left( q+1\right) +1$, we can choose $%
\delta >1-\frac{pq-1}{q(p+1)\gamma _{2}}$ and $\delta \approx 1-\frac{pq-1}{%
q(p+1)\gamma _{2}}$ such that $qN/\left( 2s_{1}\right) <1$. Therefore, the
estimates (\ref{lso28}) and (\ref{lso29}) hold. It is useful to note that $%
N\leq 2$ implies $qN/\left( 2s_{1}\right) <1$ and $qN/\left( 2s_{1}\right)
<1 $ implies $pq>\gamma _{2}\left( q+1\right) +1$.

It remains to consider the following two cases:

$\triangleright$ $N>2,$ $\frac{Np}{2s_{2}}<1$ and $\frac{Nq}{2s_{1}}\geq 1.$

Let 
\begin{equation*}
\sigma ^{\prime }=\frac{\gamma _{1}+\gamma _{1}p\delta +\left( 1-\delta
\right) p\gamma _{2}}{pq-1}.
\end{equation*}%
For positive $\mu $ such that $\mu <\min \left\{ \sigma ^{\prime },\sigma
_{1}\right\} $ and $q\mu <1$; Since $N>2$ and $q>1,$ we can choose $k>0$
such that $k>\frac{qN}{2}$ and $q\mu +\frac{qN\gamma _{2}}{2k}>\gamma _{2}.$
Since $s_{1}\leq qN/2,$ we have $k>s_{1}$. By the interpolation inequality, 
\begin{equation*}
\left\Vert u\left( t\right) \right\Vert _{k}\leq \left\Vert u\left( t\right)
\right\Vert _{\infty }^{\left( k-s_{1}\right) /k}\left\Vert u\left( t\right)
\right\Vert _{s_{1}}^{s_{1}/k}\leq Ct^{-\sigma ^{\prime }\left(
k-s_{1}\right) /k}t^{-\sigma _{1}s_{1}/k},\text{ for any }t>0.
\end{equation*}%
Therefore, by \eqref{lso8}, \eqref{lso28}, we have 
\begin{equation*}
\left\Vert u\left( t\right) \right\Vert _{k}\leq Ct^{-\mu },\;\;\text{for
all }\,t>0.
\end{equation*}%
Consequently, for any $t>0$, 
\begin{equation}
\begin{array}{l}
\left\Vert v\left( t\right) \right\Vert _{\infty }\leq \left\Vert \tilde{E}%
_{\gamma _{2},1}\left( t\right) v_{0}\right\Vert _{\infty }+t\left\Vert 
\tilde{E}_{\gamma _{2},2}\left( t\right) v_{1}\right\Vert _{\infty }+C%
\displaystyle\int_{0}^{t}\left( t-\tau \right) ^{\gamma _{2}-1-\frac{Nq}{2k}%
\gamma _{2}}\left\Vert u\left( \tau \right) \right\Vert _{k}^{q}d\tau \\ 
\qquad \qquad \leq Ct^{-\frac{N}{2r_{2}}\gamma _{2}}\left\Vert
v_{0}\right\Vert _{r_{2}}+Ct^{1-\frac{N}{2r_{2}}\gamma _{2}}\left\Vert
v_{1}\right\Vert _{r_{2}}+C\displaystyle\int_{0}^{t}\left( t-\tau \right)
^{\gamma _{2}-1-\frac{N\gamma _{2}q}{2k}}\tau ^{-q\mu }d\tau , \\ 
\qquad \qquad \leq C\left( t^{-\frac{N}{2}\gamma _{2}}+t^{1-\frac{N}{2r_{2}}%
\gamma _{2}}+t^{\gamma _{2}-\frac{N\gamma _{2}q}{2k}-q\mu }\right) \\ 
\qquad \qquad \leq Ct^{-\alpha },%
\end{array}
\label{lso30}
\end{equation}%
where $\alpha =\min \left\{ \frac{N}{2r_{2}}\gamma _{2}-1,-\gamma _{2}+\frac{%
N\gamma _{2}q}{2k}+q\mu \right\} >0,$\newline

\begin{center}
$k>s_{1}, \quad q\mu <1, \quad k>q, \quad \gamma _{2}-\frac{Nq\gamma _{2}}{2k%
}>0, \;\; \gamma _{2}-\frac{Nq\gamma _{2}}{2k}-q\mu <0.$
\end{center}

\vskip.3cm From (\ref{lso10}) and (\ref{lso30}), we infer that 
\begin{equation*}
\left\Vert v\left( t\right) \right\Vert _{\infty }\leq C\left( 1+t\right)
^{-\alpha }, \; \; \text{for all}\; \; t\geq 0.
\end{equation*}
In case $p=1$ and $q^{2}>1+4q,$ we can choose \newline

$\delta >(1+3q)/(p+1)q\gamma _{2}=(1+3q)/\left( 2\gamma _{2}q\right) $ and $%
\delta \approx (1+3q)/\left( 2\gamma _{2}q\right) $ such that $N/(2s_{2})<1.$

Thus we obtain the estimate (\ref{lso28}).

$\triangleright $ \textbf{The case: }$N>2$\textbf{$,$ }$qN/(2s_{1})\geq 1$%
\textbf{$,$ }$pN/(2s_{2})\geq 1$\textbf{$,$ }$q\geq p>1$\textbf{\ }and $%
\gamma _{1}\leq \gamma _{2}$.\newline

This case needs a careful handling and we need to restrict further the
choice of $\delta $. As $\sqrt{\frac{\left( p+1\right) q\gamma _{1}}{\left(
q+1\right) p}}<\gamma _{1}\leq \gamma _{2}<2$, $pq>1$, it follows that $1-%
\frac{pq-1}{q(p+1)\gamma _{2}}<1-\frac{\left( pq-1\right) }{p\left(
q+1\right) \gamma _{1}^{2}}$. We can select $\delta $ such that 
\begin{equation*}
1-\frac{pq-1}{q(p+1)\gamma _{2}}<\delta <\min \left\{ \frac{N\left(
pq-1\right) }{2\left( p+1\right) q},1-\frac{pq-1}{p\left( q+1\right) \gamma
_{1}^{2}}\right\} .
\end{equation*}%
Then we get immediately that $p\sigma _{2}>1/\gamma _{1}>1/q\gamma _{1}$ and 
$q\sigma _{1}>1/\gamma _{1}>1/p\gamma _{2}$.\newline
Further, we notice that there exist $\varepsilon \in \left( 0,1\right) $ and 
$\beta <1$ close to $1$ such that 
\begin{equation}
p\sigma _{2}-\varepsilon >1/\gamma _{1}>1/q\gamma _{1}\text{, \ }q\sigma
_{1}-\varepsilon >1/\gamma _{2}>1/p\gamma _{2}\text{, }\;\text{ and }%
\;\;1/\gamma _{1}<\beta -\varepsilon .  \label{epsilonbeta}
\end{equation}%
Letting $\eta =2\varepsilon \left( 1-\delta \right) /N$, we find the integer 
$i_{0}$ as in the Step 2, and, without loss of generality, assume that (\ref%
{lso24}) holds. We choose $\beta $ in addition to (\ref{epsilonbeta})
satisfying 
\begin{equation*}
\gamma _{1}<\gamma _{1}\frac{pN}{2s_{i_{0}}^{\prime \prime }}+\beta \text{,}%
\;\,\text{ \ since }1-\frac{1}{\gamma _{1}}<\frac{pN}{2s_{i_{0}}^{\prime
\prime }}\text{.}
\end{equation*}%
As 
\begin{equation*}
\delta <\frac{N\left( pq-1\right) }{2\left( p+1\right) q}\leq \frac{N\left(
pq-1\right) }{2\left( q+1\right) p},\;\;\text{and}\;\;\beta <1,
\end{equation*}%
we have 
\begin{equation}
\beta +\frac{\left( p+1\right) q\delta }{\left( pq-1\right) }<1+\frac{N}{2}%
\text{,}\;\;\text{ }\beta +\frac{\left( q+1\right) p\delta }{\left(
pq-1\right) }<1+\frac{N}{2}\text{.}  \label{betadelt}
\end{equation}%
For $2\leq i\leq i_{0}-1,$ define $r_{i+1}^{\prime }$ and $r_{i+1}^{\prime
\prime }$, inductively, as follows: 
\begin{eqnarray*}
\frac{1}{r_{2}^{\prime }} &=&\frac{1}{s_{2}^{\prime }}+\frac{2}{N}\left(
p\sigma _{2}-\varepsilon \left( 1-\delta \right) \right) \text{,}\quad \quad 
\frac{1}{r_{2}^{\prime \prime }}=\frac{1}{s_{2}^{\prime \prime }}+\frac{2}{N}%
\left( q\sigma _{1}-\varepsilon \left( 1-\delta \right) \right) \text{,} \\
\frac{1}{r_{i+1}^{\prime }} &=&\frac{1}{s_{i+1}^{\prime }}+\frac{2}{N}\left(
\beta -\varepsilon \left( 1-\delta \right) \right) ,\quad \quad \frac{1}{%
r_{i+1}^{\prime \prime }}=\frac{1}{s_{i+1}^{\prime \prime }}+\frac{2}{N}%
\left( \beta -\varepsilon \left( 1-\delta \right) \right) . \\
&&\hspace{-3cm}
\end{eqnarray*}%
It is clear that $r_{i}^{\prime },$ $r_{i}^{\prime \prime }>0$ and $%
r_{i}^{\prime }<s_{i}^{\prime },$ $r_{i}^{\prime \prime }<s_{i}^{\prime
\prime }$ for all $2\leq i\leq i_{0}.$ A simple calculation shows that $%
r_{i}^{\prime },$ $r_{i}^{\prime \prime }>1$.

As $s_{i}^{\prime }$ and $s_{i}^{\prime \prime }$ are increasing in $i$ for $%
1\leq i\leq i_{0},$ we have 
\begin{eqnarray*}
\frac{1}{r_{i+1}^{\prime }} &<&\frac{1}{s_{2}^{\prime }}+\frac{2}{N}\left(
\beta -\varepsilon \left( 1-\delta \right) \right) \\
&=&\frac{p}{s_{1}^{\prime \prime }}-\frac{2}{N}+\frac{2}{N}\varepsilon
\left( 1-\delta \right) +\frac{2}{N}\left( \beta -\varepsilon \left(
1-\delta \right) \right) \qquad \qquad \\
&=&\frac{2}{N}\left( \frac{p\left( q+1\right) \delta }{pq-1}+\beta -1\right)
<1,
\end{eqnarray*}%
from (\ref{betadelt}), i.e. $r_{i+1}^{\prime }>1.$

Similarly, we can find that $r_{i+1}^{\prime \prime }>1$.

From (\ref{test1}) and (\ref{estlprim}), we infer that there exists a
positive constant $C$ such that, for any $0\leq t\leq 1$, 
\begin{equation}
\qquad \qquad \left\Vert u(t)\right\Vert _{\infty },\text{ }\left\Vert
v(t)\right\Vert _{\infty },\text{ }\left\Vert u(t)\right\Vert _{k_{1}},\text{
}\left\Vert v(t)\right\Vert _{k_{2}}\leq C,\text{ }s_{1}^{\prime }\leq
k_{1}\leq s_{i_{0}}^{\prime },\text{ }s_{1}^{\prime \prime }\leq k_{2}\leq
s_{i_{0}}^{\prime \prime }\text{. }  \notag
\end{equation}%
Further, since $1-\eta N/2=1-\varepsilon \left( 1-\delta \right) $ and $%
p\sigma _{2}<1,$ using (\ref{lso25}), (\ref{lso26}), (\ref{lso8}) and (\ref%
{lso9}), we arrive at the estimate 
\begin{eqnarray*}
\left\Vert u\left( t,.\right) \right\Vert _{s_{2}^{\prime }} &\leq
&\left\Vert \tilde{E}_{\gamma _{1},1}\left( t\right) u_{0}\right\Vert
_{s_{2}^{\prime }}+t\left\Vert \tilde{E}_{\gamma _{1},2}\left( t\right)
u_{1}\right\Vert _{s_{2}^{\prime }} \\
&+&C\displaystyle\int_{0}^{t}\left( t-\tau \right) ^{\gamma _{1}-1-\gamma
_{1}\left( 1-\varepsilon \left( 1-\delta \right) \right) }\left\Vert u\left(
\tau ,.\right) \right\Vert _{s_{1}^{\prime \prime }}^{p}d\tau \text{,}
\end{eqnarray*}%
from which, we get%
\begin{eqnarray*}
\left\Vert u\left( t,.\right) \right\Vert _{s_{2}^{\prime }} &\leq &Ct^{-%
\frac{N}{2}\gamma _{1}\left( \frac{1}{r_{2}^{\prime }}-\frac{1}{%
s_{2}^{\prime }}\right) }\left\Vert u_{0}\right\Vert _{r_{2}^{\prime }}+t^{-%
\frac{N}{2}\gamma _{1}\left( \frac{1}{r_{2}^{\prime }}-\frac{1}{%
s_{2}^{\prime }}\right) }\left\Vert u_{1}\right\Vert _{\mathcal{\dot{H}}%
_{r_{2}^{\prime }}^{-\frac{2}{\gamma _{1}}}} \\
&+&C\displaystyle\int_{0}^{t}\left( t-\tau \right) ^{\gamma _{1}-1-\gamma
_{1}\left( 1-\varepsilon \left( 1-\delta \right) \right) }\tau ^{-p\sigma
_{2}}d\tau .\qquad \qquad
\end{eqnarray*}%
Therefore%
\begin{eqnarray*}
\left\Vert u\left( t,.\right) \right\Vert _{s_{2}^{\prime }} &\leq
&Ct^{-\gamma _{1}\left( p\sigma _{2}-\varepsilon \left( 1-\delta \right)
\right) }\left\Vert u_{0}\right\Vert _{r_{2}^{\prime }}+t^{-\gamma
_{1}\left( p\sigma _{2}-\varepsilon \left( 1-\delta \right) \right)
}\left\Vert u_{1}\right\Vert _{\mathcal{\dot{H}}_{r_{2}^{\prime }}^{-\frac{2%
}{\gamma _{1}}}} \\
&+&C\displaystyle\int_{0}^{t}\left( t-\tau \right) ^{\gamma _{1}-1-\gamma
_{1}\left( 1-\varepsilon \left( 1-\delta \right) \right) }\tau ^{-p\sigma
_{2}}d\tau \\
&\leq &Ct^{-\gamma _{1}\left( p\sigma _{2}-\varepsilon \left( 1-\delta
\right) \right) },\text{ for any }t>0\text{.}
\end{eqnarray*}%
Similarly, 
\begin{equation*}
\left\Vert v\left( t,.\right) \right\Vert _{s_{2}^{\prime \prime }}\leq
Ct^{-\gamma _{2}\left( q\sigma _{1}-\varepsilon \left( 1-\delta \right)
\right) },\text{ for any }t>0.
\end{equation*}%
In view of (\ref{epsilonbeta}) and $\beta <1,$ thanks to Lemma \ref{poldec},
for any $t>0,$ we conclude that

\begin{equation}
\left\Vert u\left( t,.\right) \right\Vert _{s_{2}^{\prime }}\leq Ct^{-\gamma
_{1}\beta /q}\text{ \ \ and \ \ }\left\Vert v\left( t,.\right) \right\Vert
_{s_{2}^{\prime \prime }}\leq Ct^{-\gamma _{2}\beta /p}\text{.}
\label{betgamma}
\end{equation}%
An iterative argument leads to 
\begin{equation*}
\left\Vert u\left( t,.\right) \right\Vert _{s_{i_{0}}^{\prime }}\leq
Ct^{-\gamma _{1}\left( \beta -\varepsilon \left( 1-\delta \right) \right)
}\leq Ct^{-\beta /q}\text{, }\left\Vert v\left( t,.\right) \right\Vert
_{s_{i_{0}}^{\prime \prime }}\leq Ct^{-\gamma _{2}\left( \beta -\varepsilon
\left( 1-\delta \right) \right) }\leq Ct^{-\beta /p},
\end{equation*}%
for any $t\geq 1$. Therefore, by \eqref{lso15} and \eqref{lso16}, we have

\begin{eqnarray*}
\left\Vert u\left( t,.\right) \right\Vert _{\infty } &\leq &Ct^{-\frac{N}{%
2r_{1}}\gamma _{1}}\left\Vert u_{0}\right\Vert _{r_{1}}+Ct^{1-\frac{N}{2m}%
\gamma _{1}}\left\Vert u_{1}\right\Vert _{m}+C\displaystyle%
\int_{0}^{t}\left( t-\tau \right) ^{\gamma _{1}-1-\gamma _{1}\frac{pN}{%
2s_{i_{0}}^{\prime \prime }}}\left\Vert v\left( \tau ,.\right) \right\Vert
_{s_{i_{0}}^{\prime \prime }}^{p}d\tau \\
&\leq &Ct^{-\frac{N}{2r_{1}}\gamma _{1}}\left\Vert u_{0}\right\Vert
_{r_{1}}+Ct^{1-\frac{N}{2m}\gamma _{1}}\left\Vert u_{1}\right\Vert _{m}+C%
\displaystyle\int_{0}^{t}\left( t-\tau \right) ^{\gamma _{1}-1-\gamma _{1}%
\frac{pN}{2s_{i_{0}}^{\prime \prime }}}\tau ^{-\beta }d\tau .
\end{eqnarray*}%
So%
\begin{eqnarray*}
\qquad \left\Vert u\left( t,.\right) \right\Vert _{\infty } &\leq &C\left(
t^{-\frac{N}{2r_{1}}\gamma _{1}}+t^{1-\frac{N}{2m}\gamma _{1}}+t^{\gamma
_{1}-\gamma _{1}\frac{pN}{2s_{i_{0}}^{\prime \prime }}-\beta }\right) \qquad
\\
&\leq &Ct^{-\tilde{\sigma}}\text{, }
\end{eqnarray*}%
where $\tilde{\sigma}=\min \left\{ \frac{N}{2r_{1}}\gamma _{1},\frac{N}{2m}%
\gamma _{1}-1,\gamma _{1}\frac{pN}{2s_{i_{0}}^{\prime \prime }}-\gamma
_{1}+\beta \right\} >0$ from (\ref{betgamma}).\newline

In view of the fact that $\frac{Nq}{2s_{1}}\geq 1$, we can make use of the
arguments similar to the ones employed for the case $\frac{Np}{2s_{2}}<1$
and $\frac{Nq}{2s_{1}}\geq 1$ to obtain $\left\Vert v\left( t,.\right)
\right\Vert _{\infty }\leq Ct^{-\hat{\sigma}}$ for some $\hat{\sigma}>0$ and
for every $t>0$. This completes the proof.\newline

\begin{remark}
In the particular case: $N>2$, $qN/(2s_{1})\geq 1$, $pN/(2s_{2})\geq 1,$ $%
q>p=1$ and $q\leq 3$, using the above method, we obtain 
\begin{equation*}
\left\Vert u\left( t,.\right) \right\Vert _{\infty }\leq Ct^{-\tilde{\sigma}}%
\text{, \ for any }t>0\text{,}
\end{equation*}%
where $\tilde{\sigma}=\min \left\{ \frac{N}{2}\gamma _{1},\frac{N}{2m}\gamma
_{1}-1,\frac{pN}{2s_{i_{0}}^{\prime \prime }}\gamma _{1}-\gamma _{1}+\gamma
_{2}\left( \beta -\varepsilon \left( 1-\delta \right) \right) \right\} $.
Here, $\varepsilon >0$ can be arbitrarily small, and $\beta $ can be
arbitrarily close to $1$. However, since $s_{i_{0}}^{\prime \prime }$
depends on $\varepsilon $ and $s_{i_{0}}^{\prime \prime }$ is decreasing in $%
\varepsilon $, it is not clear that $\tilde{\sigma}$ positive.
\end{remark}

\vskip.2cm

\begin{proof}[Proof of Theorem \protect\ref{NEG}]
The proof proceeds by contradiction. Suppose that $(u,v)$ is a mild solution
of (\ref{sys1}) which exists globally in time. Set 
\begin{equation*}
\varphi \left( t, x\right) =\varphi _{1}\left( x\right) \varphi _{2}\left(
t\right) ,\text{ }
\end{equation*}%
where $\varphi _{1}\left( x\right) =\Phi ^{l}\left( \frac{\left\vert
x\right\vert }{T^{\lambda }}\right)$ with $\Phi \in C_{0}^{\infty }\left( 
\mathbb{R}\right) $, $0\leq \Phi \left( z\right) \leq 1$, that satisfies 
\begin{equation*}
\Phi \left( z\right) =\left\{ 
\begin{array}{c}
1\text{ if }\left\vert z\right\vert \leq 1, \\[4pt] 
0\text{ if }\left\vert z\right\vert >2,%
\end{array}
\right.
\end{equation*}
and 
\begin{equation*}
\varphi _{2}\left( t\right) =\left\{ 
\begin{array}{l}
\left( 1-\frac{t}{T}\right) ^{l} \ \ \ \ \ \ \text{ if } \, t\leq T, \\[4pt] 
\text{ \ \ \ \ \ \ }0\text{\ \ \ \ \ \ \ \ \ \ if } \, t>T,%
\end{array}
\right.
\end{equation*}
where $l>\max \left\{ 1,\frac{q}{q-1}\gamma _{1}-1,\frac{p}{p-1}\gamma
_{2}-1\right\} $ and $\lambda >0$ to be determined later. \newline
We set $Q_{T}:=\mathbb{R}^{N}\times \left[ 0,T \right] .$\newline

From the definition \ref{Weaks} of the weak solution (\ref{lso16}), we have 
\begin{eqnarray}
&&\displaystyle\int_{Q_{T}}uD_{t|T}^{\gamma _{1}}\varphi \left( t,x\right)
dxdt-\displaystyle\int_{Q_{T}}u\Delta \varphi \left( t,x\right) \, dxdt = %
\displaystyle\int_{\mathbb{R}^{N}}u_{0}\left( x\right) \left(
D_{t|T}^{\gamma _{1}-1}\varphi \right) \left( 0,.\right) dx  \notag \\
&&+\displaystyle\int_{Q_{T}}u_{1}\left( x\right) D_{t|T}^{\gamma
_{1}-1}\varphi \left( t,x\right) dxdt+\displaystyle\int_{Q_{T}}\left\vert
v\left( t,x\right) \right\vert ^{p}\varphi \left( t,x\right) dxdt,
\label{formu1}
\end{eqnarray}%
\begin{eqnarray}
&&\displaystyle\int_{Q_{T}}vD_{t|T}^{\gamma _{2}}\varphi \left( t,x\right)
dxdt-\displaystyle\int_{Q_{T}}v\Delta \varphi \left( t,x\right) dxdt=%
\displaystyle\int_{\mathbb{R}^{N}}v_{0}\left( x\right) \left(
D_{t|T}^{\gamma _{2}-1}\varphi \right) \left( 0,.\right) dx  \notag \\
&&+\displaystyle\int_{Q_{T}}v_{1}\left( x\right) D_{t|T}^{\gamma
_{2}-1}\varphi \left( t,x\right) dxdt+\displaystyle\int_{Q_{T}}\left\vert
u\left( t,x\right) \right\vert ^{q}\varphi \left( t,x\right) dxdt.
\label{formut} \\
&&  \notag
\end{eqnarray}
On the other hand, we have from the definition of $\varphi $ that 
\begin{eqnarray}
&&\displaystyle\int_{Q_{T}}u\varphi _{1}\left( x\right) D_{t|T}^{\gamma
_{1}}\varphi _{2}\left( t\right) dxdt-\displaystyle\int_{Q_{T}}u\varphi
_{2}\left( t\right) \Delta \varphi _{1}\left( x\right) \, dxdt  \notag \\
&&=\displaystyle\int_{\mathbb{R}^{N}}u_{0}\left( x\right) \varphi _{1}\left(
x\right) \left( D_{t|T}^{\gamma _{1}-1}\varphi _{2}\right) \left( 0,.\right)
dx  \notag \\
&&+\displaystyle\int_{Q_{T}}u_{1}\varphi _{1}\left( x\right) D_{t|T}^{\gamma
_{1}-1}\varphi _{2}\left( t\right) dxdt+\displaystyle\int_{Q_{T}}\left\vert
v\left( t,x\right) \right\vert ^{p}\varphi _{1}\left( x\right) \varphi
_{2}\left( t\right) dxdt\text{,}
\end{eqnarray}%
and%
\begin{eqnarray}
&&\displaystyle\int_{Q_{T}}v\varphi _{1}\left( x\right) D_{t|T}^{\gamma
_{2}}\varphi _{2}\left( t\right) dxdt-\displaystyle\int_{Q_{T}}v\varphi
_{2}\left( t\right) \Delta \varphi _{1}\left( x\right) \, dxdt  \notag \\
&&=\displaystyle\int_{\mathbb{R}^{N}}v_{0}\left( x\right) \varphi _{1}\left(
x\right) \left( D_{t|T}^{\gamma _{2}-1}\varphi _{2}\right) \left( 0,.\right)
dx  \notag \\
&&+\displaystyle\int_{Q_{T}}v_{1}\left( x\right) \varphi _{1}\left( x\right)
D_{t|T}^{\gamma _{2}-1}\varphi _{2}\left( t\right) dxdt+\displaystyle%
\int_{Q_{T}}\left\vert u\left( t,x\right) \right\vert ^{q}\varphi _{1}\left(
x\right) \varphi _{2}\left( t\right) dxdt\text{.} \\
&&  \notag
\end{eqnarray}

Applying H\"{o}lder's inequality with exponents $q$ and $q^{\prime }=\frac{q%
}{q-1}$ to the right-hand side of (\ref{formu1}), we get 
\begin{eqnarray*}
\displaystyle\int_{Q_{T}}u\varphi _{1}\left( x\right) D_{t|T}^{\gamma
_{1}}\varphi _{2}\left( t\right) dxdt &=&\displaystyle\int_{Q_{T}}
u\left\vert \varphi _{2}\left( t\right) \right\vert ^{\frac{1}{q}}\left\vert
\varphi _{1}\left( x\right) \right\vert ^{1-\frac{1}{q}+\frac{1}{q}%
}\left\vert \varphi _{2}\left( t\right) \right\vert ^{-\frac{1}{q}%
}D_{t|T}^{\gamma _{1}}\varphi _{2}\left( t\right) dxdt \\
&\leq &\mathcal{I}^{\frac{1}{q}} \mathcal{\tilde A},
\end{eqnarray*}
where we have set 
\begin{equation*}
\mathcal{I} := \int_{Q_{T}}\left\vert u\right\vert ^{q}\varphi _{1}\left(
x\right) \varphi _{2} \, dxdt
\end{equation*}
\begin{equation*}
\mathcal{\tilde A}:= \left( \displaystyle \int_{Q_{T}}\left\vert
D_{t|T}^{\gamma _{1}}\varphi _{2}\left( t\right) \right\vert ^{q^{\prime
}}\left\vert \varphi _{2}\left( t\right) \right\vert ^{-\frac{q^{\prime }}{q}%
}\left\vert \varphi _{1}\left( x\right) \right\vert ^{\left( 1-\frac{1}{q}%
\right) q^{\prime }}dxdt\right) ^{\frac{1}{q^{\prime }}},
\end{equation*}
\begin{eqnarray*}
\displaystyle\int_{Q_{T}}u\Delta \varphi _{1}\left( x\right) \varphi
_{2}\left( t\right) \, dxdt &\leq &\mathcal{I}^{\frac{1}{q}}\left(
\int_{Q_{T}}\left\vert \Delta \varphi _{1}\left( x\right) \right\vert
^{q^{\prime }}\left\vert \varphi _{1}\left( x\right) \right\vert ^{-\frac{%
q^{\prime }}{q}}\left\vert \varphi _{2}\left( t\right) \right\vert ^{\left(
1-\frac{1}{q}\right) q^{\prime }}dxdt\right) ^{\frac{1}{q^{\prime }}} \\
&\leq &C\mathcal{I}^{\frac{1}{q}}\left( \int_{\text{supp}\left( \Delta
\varphi _{1}\right) }\varphi _{1}^{1-q^{\prime }}\left( x\right) \left\vert
\Delta \varphi _{1}\left( x\right) \right\vert ^{q^{\prime }}\left\vert
\varphi _{1}^{l}\left( x\right) \right\vert ^{-\frac{q^{\prime }}{q}%
}dx\int_{0}^{T}\left\vert \varphi _{2}\left( t\right) \right\vert ^{\left( 1-%
\frac{1}{q}\right) q^{\prime }} \, dt\right) ^{\frac{1}{q^{\prime }}}\text{.}
\end{eqnarray*}

Collecting the above estimates, we obtain 
\begin{eqnarray}
&&CT^{\left( 1-\gamma _{1}\right) }\displaystyle\int_{\mathbb{R}%
^{N}}u_{0}\left( x\right) \varphi _{1}\left( x\right) dx+CT^{2-\gamma _{1}}%
\displaystyle\int_{\mathbb{R}^{N}}u_{1}\left( x\right) \varphi _{1}\left(
x\right) dx+\mathcal{J}  \notag \\
&\leq &\mathcal{I} ^{\frac{1}{q}} \mathcal{\tilde A}  \notag \\
&&+\mathcal{I} ^{\frac{1}{q}}\left( \int_{\mathbb{R}^{N}}\left\vert \Delta
\varphi _{1}\left( x\right) \right\vert ^{q^{\prime }}\left\vert \varphi
_{1}\left( x\right) \right\vert ^{-\frac{q^{\prime }}{q}}dx\int_{0}^{T}\left%
\vert \varphi _{2}\left( t\right) \right\vert ^{\left( 1-\frac{1}{q}\right)
q^{\prime }}dt\right) ^{\frac{1}{q^{\prime }}},  \label{in1}
\end{eqnarray}
where we have set 
\begin{equation*}
\mathcal{J}:=\int_{Q_{T}}\left\vert v\right\vert ^{p}\varphi _{1}\left(
x\right) \varphi _{2}\left( t\right) dxdt.
\end{equation*}%
Similarly, we obtain 
\begin{eqnarray}
&&\mathcal{I}+T^{\left( 1-\gamma _{2}\right) }\int_{\mathbb{R}%
^{N}}v_{0}\varphi _{1}\left( x\right) dx + CT^{2-\gamma _{2}}\displaystyle%
\int_{\mathbb{R}^{N}}v_{1}\left( x\right) \varphi _{1}\left( x\right) dx 
\notag \\
&\leq &\mathcal{J} ^{\frac{1}{p}}\left( \displaystyle\int_{Q_{T}}\left\vert
D_{t|T}^{\gamma _{2}}\varphi _{2}\left( t\right) \right\vert ^{p^{\prime
}}\left\vert \varphi _{2}\left( t\right) \right\vert ^{-\frac{p^{\prime }}{p}%
}\left\vert \varphi _{1}\left( x\right) \right\vert ^{\left( 1-\frac{1}{p}%
\right) p^{\prime }}dxdt\right) ^{\frac{1}{p^{\prime }}}  \notag \\
&&+\mathcal{J} ^{\frac{1}{p}}\left( \int_{\mathbb{R}^{N}}\left\vert \Delta
\varphi _{1}\left( x\right) \right\vert ^{p^{\prime }}\left\vert \varphi
_{1}\left( x\right) \right\vert ^{-\frac{p^{\prime }}{p}}dx\int_{0}^{T}\left%
\vert \varphi _{2}\left( t\right) \right\vert ^{\left( 1-\frac{1}{p}\right)
p^{\prime }}dt\right) ^{\frac{1}{p^{\prime }}},  \label{in2}
\end{eqnarray}
where $pp^{\prime }=p+p^{\prime }$.\newline

Consequently, 
\begin{eqnarray*}
\mathcal{J}+CT^{\left( 1-\gamma _{1}\right) }\displaystyle\int_{\mathbb{R}%
^{N}}u_{0}\varphi _{1}^{l}\left( x\right) dx+CT^{2-\gamma _{1}}\displaystyle%
\int_{\mathbb{R}^{N}}u_{1}\left( x\right) \varphi _{1}\left( x\right) \, dx
\leq \mathcal{A}\mathcal{I}^{\frac{1}{q}},
\end{eqnarray*}
and 
\begin{eqnarray*}
\mathcal{I}+CT^{\left( 1-\gamma _{2}\right) }\int_{\mathbb{R}^{N}}v_{0}\Phi
\left( x\right) dx+CT^{2-\gamma _{2}}\displaystyle\int_{\mathbb{R}%
^{N}}v_{1}\left( x\right) \varphi _{1}\left( x\right) \, dx \leq \mathcal{B}%
\mathcal{J} ^{\frac{1}{p}},
\end{eqnarray*}
with 
\begin{eqnarray*}
\mathcal{A} &=&\left( \displaystyle\int_{Q_{T}}\left\vert D_{t|T}^{\gamma
_{1}}\varphi _{2}\left( t\right) \right\vert ^{q^{\prime }}\left\vert
\varphi _{2}\left( t\right) \right\vert ^{-\frac{q^{\prime }}{q}}\left\vert
\varphi _{1}\left( x\right) \right\vert ^{\left( 1-\frac{1}{q}\right)
q^{\prime }}dxdt\right) ^{\frac{1}{q^{\prime }}} \\
&&+\left( \int_{Q_{T}}\left\vert \Delta \varphi _{1}\left( x\right)
\right\vert ^{q^{\prime }}\left\vert \varphi _{1}\left( x\right) \right\vert
^{-\frac{q^{\prime }}{q}}\left\vert \varphi _{2}\left( t\right) \right\vert
^{\left( 1-\frac{1}{q}\right) q^{\prime }}dxdt\right) ^{\frac{1}{q^{\prime }}%
} \\
&\leq &CT^{\left( -q^{\prime }\gamma _{1}+1+N\lambda \right) \frac{1}{%
q^{\prime }}}+CT^{\left( -2\lambda q^{\prime }+1+N\lambda \right) \frac{1}{%
q^{\prime }}}\text{,}
\end{eqnarray*}
and 
\begin{eqnarray*}
\mathcal{B} &=&\left( \displaystyle\int_{Q_{T}}\left\vert D_{t|T}^{\gamma
_{2}}\varphi _{2}\left( t\right) \right\vert ^{p^{\prime }}\left\vert
\varphi _{2}\left( t\right) \right\vert ^{-\frac{p^{\prime }}{p}}\left\vert
\varphi _{1}\left( x\right) \right\vert ^{\left( 1-\frac{1}{p}\right)
p^{\prime }}dxdt\right) ^{\frac{1}{p^{\prime }}} \\
&&+\left( \displaystyle\int_{Q_{T}}\left\vert \Delta \varphi _{1}\left(
x\right) \right\vert ^{p^{\prime }}\left\vert \varphi _{1}^{l}\left(
x\right) \right\vert ^{-\frac{p^{\prime }}{p}}\left\vert \varphi _{2}\left(
t\right) \right\vert ^{\left( 1-\frac{1}{p}\right) p^{\prime }}dxdt\right) ^{%
\frac{1}{p^{\prime }}} \\
&\leq &CT^{\left( -\gamma _{2}p^{\prime }+1+N\lambda \right) \frac{1}{%
p^{\prime }}}+CT^{\left( -2\lambda p^{\prime }+1+N\lambda \right) \frac{1}{%
p^{\prime }}}.
\end{eqnarray*}
Using inequalities (\ref{in1}) and (\ref{in2}), we can write 
\begin{equation*}
\mathcal{J}+CT^{2-\gamma _{1}}\displaystyle\int_{\mathbb{R}^{N}}u_{1}\left(
x\right) \varphi _{1}\left( x\right) dx\leq \mathcal{A} \, \mathcal{B}^{%
\frac{1}{q}}\mathcal{J} ^{\frac{1}{pq}},
\end{equation*}
and 
\begin{eqnarray*}
\mathcal{I}+CT^{2-\gamma _{2}}\displaystyle\int_{\mathbb{R}^{N}}v_{1}\left(
x\right) \varphi _{1}^{l}\left( x\right) \, dx \leq \mathcal{B}\, \mathcal{A}%
^{\frac{1}{p}}\mathcal{I} ^{\frac{1}{pq}}.
\end{eqnarray*}
Now, applying Young's inequality to the right hand side of the above
estimates, we get 
\begin{equation*}
\left( pq-1\right) \mathcal{J}+CpqT^{2-\gamma _{1}}\displaystyle\int_{%
\mathbb{R}^{N}}u_{1}\left( x\right) \varphi _{1}\left( x\right) \, dx \leq
\left( pq-1\right) \left( \mathcal{A}\,\mathcal{B}^{\frac{1}{q}}\right) ^{%
\frac{pq}{pq-1}},
\end{equation*}
and 
\begin{equation*}
\left( pq -1\right) \mathcal{I}+Cpq T^{2-\gamma _{2}}\displaystyle\int_{%
\mathbb{R}^{N}}v_{1}\left( x\right) \varphi _{1}\left( x\right) dx \newline
\leq \left( pq-1\right) \left( \mathcal{B}\,\mathcal{A}^{\frac{1}{p}}\right)
^{\frac{pq}{pq-1}}.  \notag
\end{equation*}
At this stage, we set $x=T^{\lambda }y,t=T\tau ,$ with $\lambda >0$ to be
chosen later. Then we have 
\begin{equation*}
\mathcal{A} \, \mathcal{B}^{\frac{1}{q}}\leq CT^{\left[ \left( -q^{\prime
}\gamma _{1}+1+\lambda N\right) \frac{1}{q^{\prime }}+\left( -p^{\prime
}\gamma _{2}+1+N\lambda \right) \frac{1}{qp^{\prime }}\right] \frac{pq}{pq-1}%
}+T^{\left[ \left( -q^{\prime }\gamma _{1}+1+\lambda N\right) \frac{1}{%
q^{\prime }}+\left( -2\lambda p^{\prime }+1+N\lambda \right) \frac{1}{%
qp^{\prime }}\right] \frac{pq}{pq-1}}\text{,}
\end{equation*}
and 
\begin{equation*}
\mathcal{B}\, \mathcal{A}^{\frac{1}{p}}\leq C\left( T^{\left( -\gamma
_{2}p^{\prime }+1+N\lambda \right) \frac{1}{p^{\prime }}}+T^{\left(
-2\lambda p^{\prime }+1+N\lambda \right) \frac{1}{p^{\prime }}}\right)
\left( T^{\left( -q^{\prime }\gamma _{1}+1+N\lambda \right) \frac{1}{%
q^{\prime }}}+T^{\left( -2\lambda q^{\prime }+1+N\lambda \right) \frac{1}{%
q^{\prime }}}\right) ^{\frac{1}{p}}\text{.}
\end{equation*}
We choose $\lambda =\frac{\gamma _{1}}{2}$ so that ($\left( -q^{\prime
}\gamma _{1}+1+N\lambda \right) \frac{1}{q^{\prime }}=\left( -2\lambda
q^{\prime }+1+N\lambda \right) \frac{1}{q^{\prime }})$.

Therefore, we have%
\begin{equation}
\displaystyle\int_{\mathbb{R}^{N}}u_{1}\left( x\right) \varphi _{1}\left(
x\right) dx\leq CT^{\delta _{1}}\text{,}  \label{cigm1}
\end{equation}%
and%
\begin{equation}
\int_{\mathbb{R}^{N}}v_{1}\left( x\right) \varphi _{1}\left( x\right) dx\leq
CT^{\delta _{2}}\text{,}  \label{cigm2}
\end{equation}%
where 
\begin{eqnarray*}
&&\delta _{1}=\max \left\{ \left[ \left( -q^{\prime }\gamma _{1}+1+\frac{%
\gamma _{1}}{2}N\right) \frac{1}{q^{\prime }}+\left( -p^{\prime }\gamma
_{2}+1+N\frac{\gamma _{1}}{2}\right) \frac{1}{qp^{\prime }}\right] \frac{pq}{%
pq-1}+\gamma _{1}-2,\right. \\
&&\left. \left[ \left( -q^{\prime }\gamma _{1}+1+\frac{\gamma _{1}}{2}%
N\right) \frac{1}{q^{\prime }}+\left( -2\frac{\gamma _{1}}{2}p^{\prime }+1+N%
\frac{\gamma _{1}}{2}\right) \frac{1}{qp^{\prime }}\right] \frac{pq}{pq-1}%
+\gamma _{1}-2\right\} ,
\end{eqnarray*}%
and 
\begin{eqnarray*}
&&\delta _{2}=\max \left\{ \left[ \left( -\gamma _{2}p^{\prime }+1+N\frac{%
\gamma _{1}}{2}\right) \frac{1}{p^{\prime }}+\left( -q^{\prime }\gamma
_{1}+1+N\frac{\gamma _{1}}{2}\right) \frac{1}{pq^{\prime }}\right] \frac{pq}{%
pq-1}+\gamma _{2}-2,\right. \\
&&\left. \left[ \left( -2\frac{\gamma _{1}}{2}p^{\prime }+1+N\frac{\gamma
_{1}}{2}\right) \frac{1}{p^{\prime }}+\left( -q^{\prime }\gamma _{1}+1+N%
\frac{\gamma _{1}}{2}\right) \frac{1}{pq^{\prime }}\right] \frac{pq}{pq-1}%
+\gamma _{2}-2\right\} .
\end{eqnarray*}%
The condition (\ref{critdimension}) leads to either $\delta _{1}<0$ or $%
\delta _{2}<0$. Then, as $T\rightarrow \infty $, the right-hand side of (\ref%
{cigm1})(resp. (\ref{cigm2})) tends to zero and the left-hand side converges
to $\displaystyle\int_{\mathbb{R}^{N}}u_{1}\left( x\right) dx>0$ (resp. $%
\int_{\mathbb{R}^{N}}v_{1}\left( x\right) dx>0)$, which is contradiction.

We repeat the same argument with $\lambda =\frac{\gamma _{2}}{2}$ to
conclude the proof of the Theorem\ref{NEG}.
\end{proof}

\begin{remark}
\textrm{When }$\gamma _{1}=\gamma _{2}=\gamma $\textrm{, we recover the case
studied by }\cite{AlmeidaEJDE}.
\end{remark}


\begin{flushleft}
\textbf{Ahmad Bashir, Ahmed Alsaedi}\newline
NAAM Research Group, Department of Mathematics,\newline
Faculty of Science, King Abdulaziz University, P.O. Box 80203 Jeddah 21589,%
\newline
Saudi Arabia.\newline

\textbf{Mohamed Berbiche}\newline
Laboratory of Mathematical Analysis, Probability and Optimizations,\newline
Mohamed Khider University, Biskra,Po. Box 145 Biskra ( 07000),\newline
Algeria.\newline

\textbf{Mokhtar Kirane}\newline
NAAM Research Group, Department of Mathematics,\newline
Faculty of Science, King Abdulaziz University, P.O. Box 80203 Jeddah 21589,%
\newline
Saudi Arabia, and\newline
LaSIE, Universit\'{e} de La Rochelle, P\^{o}le Sciences et Technologies,%
\newline
Avenue Michel Cr\'{e}peau, 17000 La Rochelle,\newline
France.
\end{flushleft}

\end{document}